%% file: main.tex
\documentclass[english]{amsart}
\usepackage{amsmath} 
\usepackage{amssymb} 
\usepackage{amsthm}
\numberwithin{equation}{section}
\usepackage{graphicx}
\usepackage{geometry}
\usepackage{xcolor}
\usepackage{esint}
\usepackage{bm}
\usepackage{tikz}
\usepackage{comment}
\usepackage[nocompress]{cite}
\usepackage[colorlinks=true, allcolors=blue]{hyperref}
\usepackage{cleveref}

\newcommand{\OmegaT}{\Omega_T}
\newcommand{\uvec}{\mathbf{u}}

\newcommand{\vvec}{\mathbf{v}}
\newcommand{\psivec}{\mathbf{\Psi}}
\newcommand{\varphivec}{\bm{\varphi}}
\newcommand{\phivec}{\bm{\phi}}
\newcommand{\RR}{\mathbb{R}}
\newcommand{\NN}{\mathbb{N}}
\renewcommand{\div}{\mathrm{div}_x}
\newcommand{\nablax}{\nabla_x}
\newcommand{\Svisc}{\mathbb{S}}
\newcommand{\Deltax}{\Delta_x}
\newcommand{\dd}{\mathrm{d}}
\newcommand{\weak}{\rightharpoonup}
\newcommand{\weakstar}{\overset{\ast}{\weak}}
\newcommand{\rhod}{\rho_{\delta}}
\newcommand{\uvecd}{\uvec_{\delta}}
\newcommand{\cd}{c_\delta}
\newcommand{\tgamma}{{\tilde{\gamma}}}
\newcommand{\dt}{\mathrm{d}t}
\newcommand{\dx}{\mathrm{d}x}
\newcommand{\Cw}{C_{\mathrm{w}}} 
\newcommand{\shearvisc}{\mu} 
\newcommand{\bulkvisc}{\eta} 
\newcommand{\coup}{\alpha} 
\newcommand{\paracoup}{\beta} 
\newcommand{\artgrwth}{\Gamma} 
\newcommand{\artprs}{\delta} 
\newcommand{\artvisc}{\varepsilon} 
\newcommand{\proj}{\mathbb{P}_n} 
\newcommand{\nonlin}{\mathcal{N}} 
\newcommand{\Tloc}{T_{\mathrm{loc}}} 
\newcommand{\rhoepsdelt}{\rho_{\artprs,\artvisc}} 
\newcommand{\uvecepsdelt}{\uvec_{\artprs,\artvisc}} 
\newcommand{\cepsdelt}{c_{\artprs,\artvisc}} 
\newcommand{\Peff}{p_\coup} 
\newcommand{\eps}{\varepsilon} 
\newcommand{\Bog}{\mathcal{B}} 
\newcommand{\dimension}{d} 
\newcommand{\qvec}{\mathbf{q}} 
\newcommand{\Fvec}{\mathbf{F}} 
\newcommand{\Gvec}{\mathbf{G}} 
\newcommand{\data}{{\mathtt{data}}}
\newcommand{\Cdata}{C(\data)}
\newcommand{\constD}{D_0}

\newtheorem{theorem}{Theorem}[section]
\newtheorem*{theorem*}{Theorem}
\newtheorem{proposition}{Proposition}[section]

\newtheorem{lemma}{Lemma}[section]
\theoremstyle{remark}
\newtheorem{remark}{Remark}[section]
\allowdisplaybreaks
\theoremstyle{definition}
\newtheorem{definition}{Definition}[section]

\title[Existence of weak solutions to a relaxed Navier--Stokes--Korteweg model]{Global-in-time existence of finite energy weak solutions to a relaxed Navier--Stokes--Korteweg model}

\author{Florian Oschmann}
\address{Faculty of Mathematics and Physics of the Charles University, Sokolovsk\'a 49/83, CZ-186 00 Praha, Czech Republic}
\email{\href{mailto:florian.oschmann@matfyz.cuni.cz}{florian.oschmann@matfyz.cuni.cz}}

\author{Florian Wendt}
\address{Institute of Applied Analysis and Numerical Simulation, University of Stuttgart, Pfaffenwaldring~57, D-70569 Stuttgart, Germany}
\email{\href{mailto:florian.wendt@mathematik.uni-stuttgart.de}{florian.wendt@mathematik.uni-stuttgart.de}}

\date{\today}
\keywords{Navier--Stokes--Korteweg equations; relaxation system; two-phase flow; finite energy weak solution}
\subjclass[2020]{35Q35; 76N10; 76T10; 35Q30}

\begin{document}

\begin{abstract}
    We consider a parabolic relaxation formulation of the compressible Navier--Stokes--Korteweg system and prove the global-in-time existence of finite energy weak solutions to the associated initial-boundary-value-problem.
    Our proof is based on a three-level approximation scheme, a weak compactness property of the effective viscous flux, and parabolic regularity estimates.
    Our result holds for a broad variety of non-monotone pressure functions and generalizes the corresponding results known for the compressible Navier--Stokes equations to the relaxation system.
\end{abstract}

\maketitle

\section{Introduction}\label{sec:Intro}
\input{Introduction}

\section{Admissible pressure functions, weak solutions, and main result}\label{sec:wkSol}
\input{WkSol}

\section{Global-in-time existence for the regularized system}\label{sec:exSolreg}
\input{exSol}

\section{Vanishing artificial viscosity limit}\label{sec:VanArtVisc}
\input{VanArtVisc}

\section{Vanishing artificial pressure limit}\label{sec:VanArtPres}
\input{VanArtPres}

\section{Conclusions and future directions}\label{sec:conclusions}
\input{conclusions}

\section*{Acknowledgments}
{\it F. O. has been supported by the Primus grant PRIMUS 26/SCI/026. F. W. acknowledges funding by Deutsche Forschungsgemeinschaft (DFG, German Research Foundation) under Germany's Excellence Strategy - EXC 2075 - 390740016.}

\appendix
\section{Weak compactness of the effective viscous flux}\label{Sec:Appendix EVF}
\input{appendix}

\bibliographystyle{plain}
\bibliography{Lit-Master}
\end{document}

%% file: Introduction.tex
We consider an instance of a diffuse interface model for a homogeneous compressible viscous two-phase fluid given by the compressible Navier--Stokes--Korteweg equations (NSKE) in the isothermal framework in spatial dimension $d \in \{2,3\}$.
In this model, the dynamics of the two-phase fluid occupying some bounded domain $\Omega \subseteq \RR^d$ for some positive time $T>0$ are described by the fluid's density $\rho\colon [0,T] \times \Omega \to\RR_{\geq 0}$ and the fluid's velocity $\uvec\colon[0,T] \times  \Omega \to \RR^d$ that obey
\begin{equation}\label{NSK eqs}
    \left\{
    \begin{aligned}
        &\partial_t \rho + \div(\rho\uvec) = 0 &&\text{in } (0,T)\times\Omega,\\[0.15cm]
        &\partial_t(\rho\uvec) + \div(\rho\uvec\otimes\uvec) + \nablax p(\rho) 
        =
        \div\Svisc(\nablax\uvec) + \kappa \rho \nablax\Deltax \rho 
        &&\text{in } (0,T)\times\Omega,
    \end{aligned}
    \right.
\end{equation}
subject to the boundary conditions
\begin{equation}\label{NSK BC}
    \nablax\rho\cdot \mathbf{n}_{\partial\Omega} = 0, \qquad \uvec = 0 \quad \text{on } [0,T]\times\partial\Omega,
\end{equation}
and the initial conditions 
\begin{equation}\label{NSK IC}
    \rho(0) = \rho_0,\quad (\rho\uvec)(0)=(\rho\uvec)_0\quad \text{in } \Omega.
\end{equation}
Here, $\kappa>0$ denotes the constant capillarity coefficient and $p \colon[0,\infty) \to [0,\infty)$ denotes the pressure function.
Moreover, we have used the notation
\begin{equation*}
    \Svisc(A) 
    := 
    \shearvisc \left( A + A^{\mathrm{T}} - \frac{2}{d} (\mathrm{Tr} A) \,\mathbb{I}\right) + \bulkvisc \, (\mathrm{Tr} A)\, \mathbb{I}
    \qquad \text{for any } A\in \RR^{d\times d},
\end{equation*}
with $\shearvisc >0$ and $\bulkvisc \geq 0$ denoting the constant shear and bulk viscosity coefficient, respectively.
The last term on the right-hand side of the momentum equation $\eqref{NSK eqs}_2$ corresponds to a quadratic contribution of the density's gradient in the corresponding energy functional modeling capillary effects between the two phases.
To distinguish the different phases, a pressure function of Van-der-Waals-type is frequently used, characterized by a function $p\in C^1([0,\infty))$ with $p(0)=0$ that admits some constants $0<a<b<\infty$ such that $p$ is monotonically increasing on $[0,a]\cup[b,\infty)$ and monotonically decreasing on $(a,b)$.
For such a pressure function, we distinguish the phases by calling the fluid's state vapor, spinodal, and liquid, if the density's value lies in the interval $[0,a]$, $(a,b)$, and $[b,\infty)$, respectively (see~Figure~\ref{fig:Plot p VdW}).
For more details concerning the modeling, we refer to \cite{Korteweg1901,AndersonMcFaddenWheeler1998,DunnSerrin1985}.
\par
From both the numerical as well as the analytical point of view, the NSKE in a two-phase setting exhibit two main difficulties that are not present for the compressible Navier--Stokes equations (NSE) in a single-phase setting.
First, one has to treat a third-order differential operator acting on the density in the momentum equation.
In a numerical context, this asks for an appropriate discretization of this higher-order differential operator, while in an analytical context, this term is quite delicate to treat in view of compactness arguments in a weak solution setting (see~\cite{AntonelliSpirito2019,AntonelliSpirito2022}).
Second, one has to treat a non-monotone pressure-density-relation.
From an analytical point of view, the non-monotonicity of the pressure function renders compactness arguments for the density in a weak solution framework delicate (see~\cite{Feireisl2002,BreschJabin2018}). 
From a numerical standpoint, we face the problem that the Jacobian of the underlying first-order flux exhibits complex eigenvalues in the spinodal region.
This prevents the use of standard finite-volume methods that are based on hyperbolicity.
\par
To overcome the numerical difficulties, a relaxation formulation of the NSKE \eqref{NSK eqs} was recently proposed in \cite{Rohde2010} and further developed in \cite{HKMR2020}.
In this relaxation system, the third-order differential operator is approximated by introducing an additional artificial unknown $c \colon [0,T]\times\Omega \to \RR$, called the \emph{relaxation parameter}, that satisfies an additional linear equation.
The precise relaxation system that we call the \emph{relaxed Navier--Stokes--Korteweg equations (rNSKE)} reads
\begin{equation}\label{NSK(alpha,beta) system}
\left\{
\begin{aligned}
    &\partial_t \rho 
    +
    \div(\rho\uvec)
    = 0
    &&\text{in } (0,T)\times\Omega,
    \\[0.15cm]
    &\partial_t(\rho\uvec)
    +
    \div(\rho\uvec\otimes\uvec)
    +
    \nablax p(\rho)
    =
    \div\Svisc(\nablax\uvec)
    +
    \coup \rho\nablax(c-\rho)
    &&\text{in } (0,T)\times\Omega,
    \\[0.15cm]
    &\paracoup\partial_t c 
    -
    \kappa \Deltax c
    +
    \coup(c-\rho) 
    = 
    0
    &&\text{in } (0,T)\times\Omega.
\end{aligned}
\right.
\end{equation}
Here, $\coup>0$ and $\paracoup\geq 0$ are called \emph{coupling coefficients}.
In the case $\paracoup=0$, the equation defining $c$ is elliptic and the rNSKE corresponds to the one proposed in \cite{Rohde2010}, while for the case $\paracoup>0$ the equation defining $c$ is parabolic and the rNSKE corresponds to the one proposed in \cite{HKMR2020}.
\par
The rNSKE are complemented by the boundary conditions
\begin{equation}\label{NSK(alpha,beta) BC}
    \nablax c \cdot \mathbf{n}_{\partial\Omega } = 0,
    \qquad 
    \uvec = 0 \quad \text{on }[0,T]\times\partial\Omega,
\end{equation}
and the initial conditions
\begin{equation}\label{NSK(alpha,beta) IC}
    \rho(0) = \rho_0 , \qquad 
    (\rho\uvec)(0) = (\rho\uvec)_0, \qquad 
    c(0) = c_0 \qquad \text{in } \Omega,
\end{equation}
where the initial condition for $c$ is only needed in the case $\paracoup>0$.
The rNSKE have the advantage that the momentum equation $\eqref{NSK(alpha,beta) system}_2$ can be rewritten as
\begin{equation*}
    \partial_t(\rho\uvec) 
    + 
    \div(\rho \uvec\otimes \uvec) 
    +
    \nablax p_\coup(\rho) 
    =
    \div \Svisc(\nablax \uvec)
    +
    \coup \rho \nablax c
\end{equation*}
with the \emph{artificial pressure function}
\begin{align*}
    \Peff(r) := p(r) + \frac{\coup}{2} r^2 \qquad \text{for any } r \in [0,\infty).
\end{align*}
Note that for pressure functions of Van-der-Waals-type, the artificial pressure function $\Peff$ is non-decreasing, provided $\coup > 0$ is chosen large enough (see~Figure~\ref{fig:Plot p VdW}).
Being in the case where $\Peff$ is non-decreasing, this facilitates the analytical as well as the numerical treatment of the rNSKE.
\begin{figure}[ht]\label{fig:Plot p VdW}
    \centering
        \begin{tikzpicture}[scale=0.9, every node/.style={scale=0.8}]
                \draw[->, thick](-0.1,0) -- (5.8,0) node[right] {$r$};
                \draw[->, thick](0,-0.1) -- (0,5);
                \draw(1.7787,0) -- (1.7787,-0.05);
                \draw(3.9348,0) -- (3.9348,-0.05);
                \draw[scale=3.234, domain=0:1.7, smooth, variable=\x, black] plot ({\x}, {
                (\x-0.55)^3 - (\x-0.55)^2 + 0.468875
                });
                \draw[scale=3.234, domain=0:1.7, smooth, variable=\x, red] plot ({\x}, {
                (\x-0.55)^3 - (\x-0.55)^2 + 0.468875 + 0.3*\x^2
                });
                \draw[dotted] (1.7787,5.0) -- (1.7787,0) node[below] {$a$};
                \draw[dotted] (3.9348,5.0) -- (3.9348,0) node[below] {$b$} ;
                \draw (0.9,-0.15) node[below] {vapor};
                \draw (2.85,-0.1) node[below] {spinodal};
                \draw (4.8,-0.1) node[below] {liquid};
                \draw (4.6,1.9) node[below] {\color{black}$p(r)$};
                \draw (4.6,4.2) node[below] {\color{red}$p_\coup(r)$};
        \end{tikzpicture}
        \caption{Illustration of a pressure function of Van-der-Waals-type $p$ and its corresponding artificial pressure function $\Peff$ for some fixed $\coup>0$.}
\end{figure}
\par
By using a formal asymptotic expansion, one can show that in the relaxation limit $\coup \to \infty$, the rNSKE approach the NSKE if $\paracoup=0$, or $\paracoup=\mathcal{O}(\coup^{-1})$ and $\paracoup>0$ (see~\cite{HKMR2020}).
For the case $\paracoup=0$, a rigorous convergence result for the relaxation limit has been obtained in \cite{GiesselmannLattanzioTzavaras2017} working in a smooth and periodic solution setting.
Recently, for the case $\paracoup>0$, a rigorous convergence result for the relaxation limit in the class of finite energy weak solutions for the corresponding initial-boundary-value-problem (IBVP) has been obtained in \cite{ChaudhuriRohdeWendt2025}.
The relaxation limit was also verified by numerical experiments in \cite{NeusserRohdeSchleper2015,HKMR2020}.
Moreover, we refer to the recent result \cite{RohdeWendt2025a} for a rigorous derivation of a two-phase mixture model corresponding to the rNSKE in the case $\paracoup>0$ working in a finite energy weak solution framework.
\par
Without being exhaustive, we give a brief overview on existence results for the rNSKE and related models.
For the compressible Navier--Stokes equations (NSE), the global-in-time existence and uniqueness of classical solutions for small initial data is due to \cite{MatsumuraNishida1983}.
For large initial data, the local-in-time existence and uniqueness of strong solutions was proved in \cite{Valli1983}, while the corresponding global-in-time existence is an open problem.
Concerning the weak solution framework, the global-in-time existence of finite energy weak solutions has been obtained in \cite{Lions1998,FeireislNovotnyPetzeltova2001} for an isentropic pressure law $r\mapsto r^\gamma$ in the regime $\gamma \in \big( \frac{d}{2}, \infty \big)$.
These results are based on compactness arguments that rely sensitively on the monotonicity of the underlying pressure function, thus rendering an extension to general non-monotone pressure functions delicate.
For results tackling this issue, we refer to \cite{Feireisl2002,BreschJabin2018}.
For global-in-time existence results concerning the compressible Navier--Stokes--Fourier (NSF) equations, we refer to the book \cite{FeireislNovotny2017singlim}.
Concerning the NSKE, the local-in-time existence and uniqueness of strong solutions with large initial data has been obtained in \cite{Kotschote2008}, and the local-in-time existence and uniqueness of classical solutions for the corresponding Cauchy problem is due to \cite{HattoriLi1994}.
For the weak solution framework, up to the authors' knowledge, there is no global-in-time existence result available so far.
However, the global-in-time existence of weak solutions to the NSKE in a specific density-dependent viscosity setting has been obtained in \cite{AntonelliSpirito2019,AntonelliSpirito2022,BreschDesjardinsLin2003}.
For the rNSKE in the case $\paracoup = 0$, the local-in-time existence and uniqueness of classical solutions for the corresponding Cauchy problem with large initial data was proved in \cite{Rohde2010}. For a corresponding result and a global-in-time well-posedness result for small initial-data in critical regularity spaces, we refer to \cite{Charve2014} and \cite{Charve2016}, respectively.
In the 1D framework with periodic boundary conditions and large initial data, the global-in-time existence and uniqueness of strong solutions was proved in \cite{RohdeWendt2025b}.
\par
However, there is no rigorous existence result for the IBVP corresponding to the rNSKE with $\paracoup>0$. In particular, the global-in-time existence of finite energy weak solutions in the higher-dimensional case has not been considered so far.
We emphasize that the aforementioned results in \cite{RohdeWendt2025a,ChaudhuriRohdeWendt2025} postulate the global-in-time existence of these solutions.
In the present paper we will fill this gap and prove the global-in-time existence of finite energy weak solutions for the IBVP corresponding to the rNSKE with $\paracoup>0$ in spatial dimension $d\in\{2,3\}$ complementing the results in \cite{RohdeWendt2025a,ChaudhuriRohdeWendt2025}.
\par
Our proof relies on the approximation method from \cite{FeireislNovotnyPetzeltova2001} where the global-in-time existence of finite energy weak solutions to the compressible NSE has been proved.
Compared to the compressible NSE, we have to treat a different energy structure, a possible non-monotonicity of the pressure function, and an additional nonlinear coupling between the momentum equation and the parabolic equation for the relaxation coefficient.
In particular, it is a priori unclear whether the artificial viscosity regularization for the density as well as the delicate compactness arguments from \cite{FeireislNovotnyPetzeltova2001} are compatible with the rNSKE.
It turns out that the error term in the energy inequality caused by the artificial viscosity regularization of the density can be controlled by the corresponding dissipative term while the other issues can be tackled by exploiting parabolic regularity estimates as well as using the technique from \cite{Feireisl2002}.
\\

\paragraph{\bf Organization of the paper.} In Section~\ref{sec:wkSol}, we first define the class of admissible pressure functions and discuss suitable pressure decompositions that we need for the analysis in this work.
We then introduce the concept of finite energy weak solutions, together with a formal derivation motivating why our definition is reasonable.
At the end of \Cref{sec:wkSol}, we state our main result (\Cref{main result:ex wkSol}).
In \Cref{sec:exSolreg}, we prove the global-in-time existence of solutions to a regularized rNSKE system depending on two approximation parameters.
In \Cref{sec:VanArtVisc} and \Cref{sec:VanArtPres}, we perform the corresponding approximation limits for the regularized system resulting into the proof of \Cref{main result:ex wkSol}.
In \Cref{sec:conclusions}, we close this work with some conclusions. 
In the Appendix~\ref{Sec:Appendix EVF}, we state and prove a general version of a weak compactness property to the effective viscous flux which we will apply frequently in the context of the rNSKE throughout this paper (see \Cref{sec:VanArtVisc} and \Cref{sec:VanArtPres}).\\

\paragraph{\bf Notations.} Let $D\subseteq \RR^d$, $d\in \{2,3\}$ be a bounded domain.
For $T>0$, the space-time cylinder corresponding to $D$ will be denoted by $D_T:= (0,T)\times D$.
For $s \in [1,\infty]$, Lebesgue and Sobolev spaces on $D$ will be denoted in the usual way by $L^s(D)$ and $W^{1,s}(D)$. The space of zero-trace Sobolev functions will be denoted by $W_0^{1,s}(D)$, and the space of distributions on $D$ will be denoted by $\mathcal{D}^\prime(D)$.
If no ambiguities arise, we use for vector- or matrix-valued function spaces the same notation, e.g., we write $L^s(D)$ instead of $L^s(D;\RR^d)$.
For $f \in L^1(D)$, we denote its mean value over $D$ by
\begin{equation*}
    \fint_D f := \frac{1}{|D|} \int_D f(x) \, \dd x,
\end{equation*}
where $|D|$ denotes the $d$-dimensional Lebesgue-measure of $D$.
The space $L_0^s(D)$ consists of all functions $f \in L^s(D)$ with $\fint_D f = 0$.
The space of functions on $[0,T]$ ranging into some Lebesgue space $L^s(D)$ continuously with respect to the weak topology on $L^s(D)$ will be denoted by $\Cw([0,T];L^s(D))$.
The conjugate H\"older exponent to $s$ will be denoted as $s^\prime$, i.e.,
\begin{equation*}
    s^\prime:= 
    \begin{cases}
        \frac{s}{s-1} & \text{if } s\in (1,\infty),
        \\
        \infty & \text{if } s=1,
        \\
        1 & \text{if } s=\infty.
    \end{cases}
\end{equation*}
Moreover, for $r \in (1,\infty]$, we denote by $r^-$ any number in $[1,\infty)$ that is smaller than but arbitrary close to $r$, i.e., any number in $[1,r)$.
Moreover, we use the notation
\begin{align}\label{defi Sobolev exponents}
    r^\ast :=
    \begin{cases}
        \frac{dr}{d-r} & \text{if } r< d,
        \\
        \infty & \text{if } r\geq d,
    \end{cases}
    &&
    \overline{r}:=
    \begin{cases}
        \frac{dr}{d-r} & \text{if } r <d,
        \\
        \infty^- & \text{if } r=d,
        \\
        \infty & \text{if } r>d.
    \end{cases}
\end{align}
Finally, for any $r,s \in (1,\infty)$ we denote
\begin{equation*}
\begin{aligned}
    E^{r,s}_0(D) := \overline{C^\infty_c(D;\RR^\dimension)}^{\|\cdot\|_{E^{r,s}(D)}},
    \qquad 
    \|\mathbf{f}\|_{E^{r,s}(D)}:= \|\mathbf{f}\|_{L^r(D)} + \|\div\mathbf{f}\|_{L^s(D)} \quad \forall\, \mathbf{f} \in C^\infty_c(D;\RR^d).
\end{aligned}
\end{equation*}

%% file: WkSol.tex
In this section, we introduce the concept of finite energy weak solutions for the IBVP to the rNSKE \eqref{NSK(alpha,beta) system}--\eqref{NSK(alpha,beta) IC}.
From the theory on compressible NSE, it is known that the regularity class and the global-in-time existence results for finite energy weak solutions depend crucially on the asymptotic growth rate and on the monotonicity properties of the pressure-density-relation $p$ (see e.g.~\cite{FeireislNovotnyPetzeltova2001,Feireisl2002}).
Therefore, we first have to clarify which kind of assumptions we impose on the pressure function.
To do so, we make the following definition that allows us to formulate these concisely.
\begin{definition}
    We call a pressure function $p\colon[0,\infty) \to [0,\infty)$ \emph{admissible with growth rate} $\gamma \in (1,\infty)$ if
    \begin{enumerate}
        \item $p \in C^0([0,\infty))\cap C^1((0,\infty))$, $p(0)=0$, $p\geq 0$ on $[0,\infty)$, and
        \item there exist positive constants $a_p,A_p,b_p\in (0,\infty)$ such that
        \begin{equation}\label{grwth adm p}
            p(r) \leq A_p r^\gamma + b_p \quad \forall\, r \in [0,\infty),\qquad 
            p^\prime(r) \geq a_p r^{\gamma-1} - b_p\quad \forall\, r \in (0,\infty).
        \end{equation}
    \end{enumerate}
\end{definition}
The class of admissible pressure functions accounts for a broad class of monotone and non-monotone pressures. 
To substantiate this statement, we give a few examples:
    \begin{enumerate}
        \item Any isentropic pressure law $r \mapsto r^\gamma$, where $\gamma \in (1,\infty)$, is admissible with growth rate $\gamma$.
        \item Any monotone pressure law $p\in C^0([0,\infty))\cap C^1((0,\infty))$ satisying $p(0)=0$ and the growth condition
        \begin{equation*}
            \lim\limits_{r\to\infty} \frac{p^\prime(r)}{r^{\gamma-1}} = p_\infty
        \end{equation*}
        for some constant $p_\infty>0$ and some $\gamma \in (1,\infty)$ is admissible with growth rate $\gamma$.
        \item Any pressure function of Van-der-Waals-type $p\in C^0([0,\infty))\cap C^1((0,\infty))$ (see~\Cref{sec:Intro}) satisfying the growth condition
        \begin{equation*}
            \lim\limits_{r\to\infty} \frac{p^\prime(r)}{r^{\gamma-1}} = p_{\infty}
        \end{equation*}
        for some constant $p_{\infty}>0$ and some $\gamma \in (1,\infty)$ is admissible with growth rate $\gamma$.
    \end{enumerate}
For an admissible pressure function $p$ with growth rate $\gamma \in (1,\infty)$, we fix a suitable decomposition into a monotone and a non-monotone part.
More precisely, we decompose $p$ via
\begin{equation}\label{dec prs}
    p = h + q
\end{equation}
with
\begin{equation}\label{dec prs reg}
    h\in C^0([0,\infty))\cap C^1((0,\infty)),
    \quad 
    q\in C^\infty_c([0,\infty)),
    \quad 
    h(0) = 0,
    \quad 
    q\leq 0
    \quad \text{on } [0,\infty)
\end{equation}
satisfying
\begin{equation}\label{dec prs grwth h}
    h(r)\leq A_h r^\gamma + b_h \quad \forall\, r \in [0,\infty),
    \quad 
    a_h r^{\gamma-1}\leq h^\prime(r) \quad \forall\, r \in (0,\infty),
\end{equation}
for some constants $a_h, A_h, b_h \in (0,\infty)$.
The existence of such a decomposition follows from the fact that $p$ is admissible with growth rate $\gamma$.
Indeed, we first choose $R_0>0$ large enough such that
\begin{equation*}
    a_p r^{\gamma-1} - 2b_p \geq \frac{a_p}{2}r^{\gamma-1} \qquad \forall \, r \in [R_0,\infty).
\end{equation*}
Then, we take a cut-off function $\psi \in C^\infty_c([0,\infty))$ which satisfies $\psi \geq 0$, $|\psi^\prime|\leq 1$, and
\begin{equation*}
    \psi(r) = r \quad \forall\, r \in [0,R_0],
    \qquad 
    \psi(r) = 0 \quad \forall\, r \in [3R_0,\infty).
\end{equation*}
By setting 
\begin{equation*}
    h(r):=p(r) + b_p\psi(r),
    \qquad 
    q(r) := -b_p\psi(r),
\end{equation*}
we easily check that $p,h$, and $q$ satisfy \eqref{dec prs}--\eqref{dec prs grwth h}.
\par
According to the decomposition \eqref{dec prs}, we introduce corresponding pressure potentials via
\begin{equation}\label{def press pot H Q W}
    H(r):=r\int_1^r \frac{h(z)}{z^2} \, \dd z,
    \quad 
    Q(r) :=r \int_1^r \frac{q(z)}{z^2} \, \dd z,
    \quad W(r):=H(r)+Q(r)
    \qquad \forall r \in [0,\infty),
\end{equation}
for which we readily verify that $H,Q,W \in C^0([0,\infty))\cap C^2((0,\infty))$, and
\begin{align}\label{relations potential-pressure}
        h(r) = H^\prime(r) r -H(r),
        && 
        q(r) = Q^\prime(r) r - Q(r),
        && 
        p(r)=W^\prime(r)r - W(r) 
\end{align}
for any $r \in (0,\infty)$.
Note that $H$ is convex, while $Q$ and $W$ may not be.
By virtue of \eqref{grwth adm p} and \eqref{dec prs grwth h}, we have that
\begin{equation}\label{bounds W,H}
    \begin{aligned}
        &r^\gamma \leq c_1 + c_2 H(r),
        \quad
        r^\gamma \leq c_1 + c_3 W(r),
        \quad 
        |H(r)|+ |W(r)|\leq c_1(1 + r^\gamma),
    \end{aligned}
\end{equation}
for any $r \in [0,\infty)$ and for some constants $c_1,c_2,c_3>0$ that do not depend on $r$.
We will use these decompositions and the corresponding relations frequently in \Cref{sec:exSolreg}.
\par
Let us motivate the regularity classes for a finite energy weak solution to the IBVP \eqref{NSK(alpha,beta) system}--\eqref{NSK(alpha,beta) IC} with $\paracoup>0$ by the following formal calculations.
For $T>0$, we denote by $(\rho,\uvec,c)$ a hypothetical smooth solution of \eqref{NSK(alpha,beta) system}--\eqref{NSK(alpha,beta) IC} with $\rho>0$ that exists on $[0,T]\times \Omega$ and emanates from smooth initial conditions $(\rho_0,(\rho\uvec)_0,c_0)$ with $\rho_0>0$.
Taking the scalar product of the momentum equation $\eqref{NSK(alpha,beta) system}_2$ with $\uvec$, using the continuity equation $\eqref{NSK(alpha,beta) system}_1$, the parabolic equation $\eqref{NSK(alpha,beta) system}_3$, the boundary conditions \eqref{NSK(alpha,beta) BC}, and integration by parts leads to
\begin{equation}\label{energy eq smth}
    \frac{\dd}{\dd t} E[\rho,\rho\uvec,c] + \int_\Omega \Svisc(\nablax \uvec) : \nablax\uvec + \paracoup|\partial_t c|^2 \, \dd x 
    = 0 
    \quad \text{on } (0,T),
\end{equation}
where
\begin{equation}\label{def E(t)}
    E[\rho,\rho\uvec,c]:=
    \int_\Omega 
    \frac{|\rho\uvec|^2}{2\rho}
    +
    W(\rho)
    +
    \frac{\coup}{2}|\rho-c|^2
    +
    \frac{\kappa}{2} |\nablax c|^2 
    \, \dd x
\end{equation}
for any $t \in (0,T)$.
\par
To ease the notation, we introduce the following convention which shall hold throughout the rest of this paper.
By $C(\cdot)$ we denote a generic positive constant that only depends on 
\begin{equation}\label{data - fixed params}
    \coup,\paracoup,\kappa,\shearvisc,\bulkvisc,T,\gamma,\Omega, \max\limits_{r\in [0,\infty)}|q(r)|,
\end{equation}
and on its arguments, but might change its value from line to line.
Moreover, we introduce
\begin{equation}\label{data}
    \data:=\Bigg(\|\rho_0\|_{L^{\tgamma}(\Omega)},\|(\rho\uvec)_0\|_{L^{\frac{2\tgamma}{\tgamma+1}}(\Omega)}, \Bigg\|\frac{|(\rho\uvec)_0|^2}{\rho_0}\Bigg\|_{L^1(\Omega)},\|c_0\|_{W^{1,2}(\Omega)}\Bigg),
\end{equation}
where $\tgamma:=\max\{2,\gamma\}$.
\par
By H\"older's inequality and \eqref{bounds W,H}, we infer that
\begin{equation*}
    E_0:= E[\rho_0,(\rho\uvec)_0,c_0] \leq \Cdata.
\end{equation*}
Integrating \eqref{energy eq smth} in time and using \eqref{bounds W,H} as well as H\"older's and Poincar\'e's inequality leads to
\begin{equation}\label{smth energy bds}
\begin{aligned}
    &\left\| \sqrt{\rho} \uvec \right\|_{L^\infty(0,T;L^2(\Omega))}
    +
    \|\rho\|_{L^\infty(0,T;L^\gamma(\Omega))}
    +
    \|\rho-c\|_{L^\infty(0,T;L^2(\Omega))}
    +
    \|\nablax c\|_{L^\infty(0,T;L^2(\Omega))}
    \\
    &\quad +
    \|\uvec\|_{L^2(0,T;W^{1,2}(\Omega))}
    +
    \|\partial_t c\|_{L^2(0,T;L^2(\Omega))}
    \leq 
    \Cdata.
\end{aligned}
\end{equation}
Integrating the parabolic equation $\eqref{NSK(alpha,beta) system}_3$ in space only, using the conservation of mass and the boundary conditions \eqref{NSK(alpha,beta) BC} yields
\begin{equation*}
    \frac{\dd}{\dd t}\int_\Omega c\, \dd x
    =
    -\frac{\coup}{\paracoup} 
    \bigg(
    \int_\Omega c \, \dd x
    +
    \int_\Omega \rho_0\, \dd x
    \bigg)
    \quad \text{on } (0,T),
\end{equation*}
and thus, Gr\"onwall's inequality implies
\begin{equation*}
    \left\| \int_\Omega c\, \dd x \right\|_{L^\infty((0,T))} \leq \Cdata.
\end{equation*}
In particular, by \eqref{smth energy bds} and the Poincar\'e--Wirtinger inequality,
\begin{equation}\label{smth LinftyL2 bd c}
    \|c\|_{L^\infty(0,T;L^2(\Omega))} \leq \Cdata.
\end{equation}
Combining \eqref{smth energy bds} and \eqref{smth LinftyL2 bd c} leads to
\begin{equation}\label{smth LinftryLtgamma bd rho}
    \|\rho\|_{L^\infty(0,T;L^{\tgamma}(\Omega))} \leq \Cdata.
\end{equation}
Finally, by using standard parabolic regularity estimates (see e.g.~\cite[Theorem~11.29]{FeireislNovotny2017singlim}), we obtain
\begin{equation}\label{smth L2W22 bd c}
    \|c\|_{L^2(0,T;W^{2,2}(\Omega))} 
    \leq 
    \Cdata \left( \|\rho\|_{L^2(0,T;L^2(\Omega))} + \|c_0\|_{W^{1,2}(\Omega)} \right)
    \leq \Cdata.
\end{equation}
With relation \eqref{energy eq smth} and the bounds \eqref{smth energy bds}--\eqref{smth L2W22 bd c} that only depend on $\data$, we anticipate the following definition for finite energy weak solutions to the IBVP \eqref{NSK(alpha,beta) system}--\eqref{NSK(alpha,beta) IC}.
This definition corresponds to the one that is known from the theory on the compressible NSE (see e.g.~\cite{FeireislNovotnyPetzeltova2001}).
\begin{definition}
    Let $T,\coup,\paracoup,\kappa,\shearvisc>0$, $\bulkvisc\geq 0$, $d\in \{2,3\}$, and let $\Omega \subseteq \RR^d$ be a bounded domain. 
    Let $p$ be an admissible pressure function with growth rate $\gamma \in (1,\infty)$ and denote $\tgamma:=\max\{2,\gamma\}$.
    Suppose that initial conditions $\rho_0 \in L^{\tgamma}(\Omega)$, $(\rho\uvec)_0\in L^{\frac{2\tgamma}{\tgamma+1}}(\Omega;\RR^3)$, $c_0 \in W^{1,2}(\Omega)$ with
    \begin{equation}\label{wkSol IC}
        \rho_0 \geq 0 \quad \text{a.e.}, \quad
        (\rho\uvec)_0 = 0\quad \text{on } \{\rho_0=0\},\quad 
        \frac{|(\rho\uvec)_0|^2}{\rho_0} \in L^1(\Omega),
    \end{equation}
    are given.
    Then we call the triplet $(\rho,\uvec,c)$ a \emph{finite energy weak solution} to the IBVP \eqref{NSK(alpha,beta) system}--\eqref{NSK(alpha,beta) IC} on $\OmegaT$ emanating from the initial conditions $(\rho_0,(\rho\uvec)_0,c_0)$ if the following holds:
    \begin{enumerate}
        \item We have the regularity
        \begin{equation}\label{def wsol reg}
            \begin{aligned}
                &\rho \in \Cw([0,T];L^{\tgamma}(\Omega)),\quad 
                \rho\geq 0\,\,\,\text{a.e.},\\
                &\uvec\in L^2(0,T;W^{1,2}_0(\Omega;\RR^d)),\quad
                (\rho\uvec)\in \Cw([0,T];L^{\frac{2\tgamma}{\tgamma+1}}(\Omega;\RR^d)),\\
                &c\in C([0,T];W^{1,2}(\Omega))\cap L^2(0,T;W^{2,2}(\Omega)),\quad 
                \partial_t c \in L^2(\OmegaT);
            \end{aligned}
        \end{equation}
        \item the equations \eqref{NSK(alpha,beta) system} are satisfied in the weak sense, that is,
        \begin{align}
                &\int_0^T \int_\Omega 
                \rho \partial_t \varphi 
                +
                \rho\uvec \cdot \nablax \varphi 
                \, \dd x \, \dd t 
                = 0 \quad \forall \, \varphi \in C^\infty_c(\RR^d_T),\label{def wsol cont}
                \\
                &\int_0^T\int_\Omega 
                \rho\uvec \cdot \partial_t\varphivec 
                +
                \rho\uvec\otimes\uvec:\nablax\varphivec
                +
                \Peff(\rho)\div\varphivec
                \, \dd x \, \dd t\nonumber
                \\
                &\qquad=
                \int_0^T \int_\Omega 
                \Svisc(\nablax\uvec):\nablax\varphivec -\coup \rho \nablax c \cdot \varphivec
                \, \dd x \, \dd t 
                \qquad \forall \, \bm{\varphi}\in C^\infty_c(\OmegaT;\RR^d),\label{def wsol mom}
                \\
                &\int_0^T \int_\Omega 
                \paracoup c\partial_t \varphi 
                -
                \kappa\nablax c\cdot \nablax \varphi 
                -
                \coup(c-\rho) \varphi
                \, \dd x \, \dd t
                = 0  \qquad\forall \, \varphi \in C^\infty_c(\RR_T^d); \label{def wsol parab}
        \end{align}
        \item the initial conditions \eqref{NSK(alpha,beta) IC} hold in the sense that
        \begin{equation}\label{def wsol IC}
            \rho(0)=\rho_0,\quad 
            (\rho\uvec)(0)=(\rho\uvec)_0,\quad 
            c(0)=c_0 \quad \text{a.e.~in } \Omega;
        \end{equation}
        \item the energy inequality corresponding to \eqref{energy eq smth} holds in the weak sense, that is,
        \begin{equation}\label{def wsol Eineq}
            \begin{aligned}
                -\int_0^T E[\rho,\rho\uvec,c] \partial_t \psi \, \dd \tau
                +
                \int_0^T \psi \int_\Omega 
                \Svisc(\nablax\uvec) : \nablax\uvec + \paracoup|\partial_t c|^2
                \, \dd x \, \dd \tau 
                \leq 
                E_0 \, \psi(0)
            \end{aligned}
        \end{equation}
        for any test function $\psi \in C^\infty_c([0,T))$ with $\psi \geq 0$, where $E_0:=E[\rho_0,(\rho\uvec)_0,c_0]$ with $E[\cdot,\cdot,\cdot]$ being defined in \eqref{def E(t)}.
    \end{enumerate}
\end{definition}
\begin{remark}
        The regularity of $c$ in \eqref{def wsol reg} and \eqref{def wsol parab} imply that we have in fact
        \begin{equation*}
            \paracoup\partial_t c - \kappa \Deltax c + \coup(c-\rho) = 0 \quad \text{a.e.~in } \OmegaT
        \end{equation*}
        and
        \begin{equation*}
            \nablax c(\tau)\cdot \mathbf{n}_{\mid \partial\Omega} = 0
            \quad \text{in the sense of traces for a.e.~} \tau \in (0,T).
        \end{equation*}
\end{remark}
With the definition of finite energy weak solutions corresponding to the IBVP of the rNSKE at hand we can now state our main result which gives a positive answer concerning the global-in-time existence of such solutions.
The precise result reads as follows:
\begin{theorem}\label{main result:ex wkSol}
    Let $\coup,\paracoup,\kappa,T,\shearvisc>0$, $\bulkvisc \geq 0$, $d \in\{2,3\}$, and let $\Omega \subseteq \RR^d$ be a bounded domain with $\partial \Omega \in C^{2,\nu}$ for some $\nu \in (0,1]$.
    Let $p$ be an admissible pressure function with growth rate $\gamma \in (1,\infty)$ and denote $\tgamma:=\max\left\{ 2,\gamma \right\}$.
    Suppose that initial conditions $\rho_0\in L^{\tgamma}(\Omega)$, $(\rho\uvec)_0 \in L^{\frac{2\tgamma}{\tgamma+1}}(\Omega;\RR^d)$, $c_0 \in W^{1,2}(\Omega)$ satisfying \eqref{wkSol IC} are given.
    Then there exists a finite energy weak solution $(\rho,\uvec,c)$ to \eqref{NSK(alpha,beta) system}--\eqref{NSK(alpha,beta) IC} on $\OmegaT$ emanating from the initial conditions $(\rho_0,(\rho\uvec)_0,c_0)$.
\end{theorem}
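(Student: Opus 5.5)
We follow the three-level approximation scheme of Feireisl--Novotn\'y--Petzelt\'ova. Fix parameters $\artvisc,\artprs>0$ and an exponent $\artgrwth\ge\max\{\tgamma,6\}$ large. We study the regularized system in which the continuity equation is replaced by $\partial_t\rho+\div(\rho\uvec)=\artvisc\Deltax\rho$ with homogeneous Neumann data. The pressure $\Peff$ is replaced by $\Peff+\artprs\rho^\artgrwth$, and the momentum equation receives the compensating term $\artvisc\nablax\rho\cdot\nablax\uvec$. The parabolic equation for $c$ is kept unchanged. The initial data are smoothed, with $\rho_{0}$ bounded away from zero and satisfying the Neumann compatibility condition.

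\textbf{Regularized level.} For fixed $\artvisc,\artprs$ we construct solutions by a Faedo--Galerkin approximation of $\uvec$ in $\proj$. Given $\uvec\in C([0,T];X_n)$, we solve the continuity equation by classical parabolic theory; this gives $\rho>0$ via the maximum principle. We then solve the linear heat equation for $c$ with source $\coup\rho$. Local existence follows from a Schauder fixed point argument, and global existence from the discrete energy equality. The key point is the energy identity. Testing the momentum equation by $\uvec$, the $c$-equation by $\partial_t c$, and using $W'(\rho)$ together with $\coup(\rho-c)$ against the regularized continuity equation, we obtain the dissipative term $\artvisc\int W''(\rho)|\nablax\rho|^2+\coup|\nablax\rho|^2$ plus the cross term $-\artvisc\coup\int\nablax\rho\cdot\nablax c$. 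Since $W''=H''+Q''$ and $Q''$ is bounded and compactly supported, the bad parts are bounded by $C\artvisc\|\nablax\rho\|_{L^2}^2$ on the set $\{\rho\le 3R_0\}$. These are absorbed by $\coup|\nablax\rho|^2$ after Young's inequality, at the price of $\artvisc\|\nablax c\|_{L^2}^2$, which Gr\"onwall's inequality handles. If the constants do not fit, we enlarge the artificial term with an extra $\artvisc$-viscous contribution. Passing $n\to\infty$ is then standard, using the uniform bound on $\artvisc\nablax\rho$ in $L^2$ and parabolic compactness of $\rho$.

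\textbf{Limit $\artvisc\to0$.} The energy inequality yields bounds uniform in $\artvisc$ for $\rho$ in $L^\infty L^\artgrwth$ and for $\uvec$ in $L^2W^{1,2}_0$. Parabolic regularity for $c$ as in \eqref{smth L2W22 bd c} gives $c$ bounded in $L^2W^{2,2}\cap W^{1,2}L^2$, so $c$ converges strongly in $L^2(0,T;W^{1,2})$. Consequently $\rho\nablax c\to\rho\nablax c$ weakly, because $\rho$ converges weakly and $\nablax c$ converges strongly. Improved pressure integrability follows from testing with the Bogovskii operator applied to $\rho$. The crux is strong convergence of $\rho$. Here we use the appendix result on weak compactness of the effective viscous flux $\Peff(\rho)+\artprs\rho^\artgrwth-(\tfrac{2(d-1)}{d}\shearvisc+\bulkvisc)\div\uvec$. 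The extra coupling $\coup\rho\nablax c$ is compact in a negative Sobolev space and does not perturb this argument. Since $\rho\in L^2$ with high integrability, the renormalized equation holds with $b(\rho)=\rho\log\rho$. Comparing $\overline{\rho\log\rho}$ with $\rho\log\rho$ and using that $\Peff+\artprs\rho^\artgrwth$ is monotone up to a Lipschitz perturbation, Gr\"onwall gives $\overline{\rho\log\rho}=\rho\log\rho$. Here we use the decomposition $p=h+q$, where $q$ is a smooth Lipschitz perturbation as in Feireisl's non-monotone theory.

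\textbf{Limit $\artprs\to0$ (main obstacle).} Now $\rho$ is only bounded in $L^\infty L^{\tgamma}$, with $\tgamma\ge2$. This is exactly what makes the scheme work: $\rho\in L^2$ means $\rho$ is a renormalized solution by DiPerna--Lions, so oscillation defect measures are unnecessary. Nevertheless, I would follow the Feireisl truncation approach with $T_k(\rho)$ to stay safe. The pressure gain $\rho\in L^{\tgamma+\theta}$ again comes from Bogovskii estimates. The coupling term $\rho\nablax c$ is handled by strong convergence of $\nablax c$ in $L^2L^{q}$ for some $q>2$, which follows from $W^{2,2}$ bounds and Aubin--Lions. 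The step I expect to be hardest is controlling the non-monotone part $q$ in the effective viscous flux identity. We must show
\[
\int_\Omega \overline{\rho\log\rho}-\rho\log\rho\,\dx\le C\int_0^t\int_\Omega \overline{\rho\log\rho}-\rho\log\rho\,\dx\,\dd\tau .
\]
This uses $\overline{h(\rho)\rho}\ge\overline{h(\rho)}\,\rho$ for the monotone part $h$, together with the Lipschitz continuity of $\frac{\coup}{2}r^2+q$ on bounded sets. The quadratic part is itself monotone, and $q$ is compactly supported, so its defect is bounded by the $\log$-defect. Finally, the energy inequality follows by weak lower semicontinuity.
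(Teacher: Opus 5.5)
There is a genuine gap in the first-level energy estimate, and everything downstream depends on it: the bounds uniform in $n$ and in $\varepsilon$, and the energy inequality at the $\artprs$-level.

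You keep the full potential $W$ in the approximate energy. Testing the regularized continuity equation with $W'(\rho)+\coup(\rho-c)$ then produces the density dissipation $\varepsilon\int_\Omega (W''(\rho)+\coup)|\nablax\rho|^2\,\dx=\varepsilon\int_\Omega \frac{\Peff'(\rho)}{\rho}|\nablax\rho|^2\,\dx$, plus the nonnegative $\artprs$-contribution. This is nonnegative for all densities only if $\Peff$ is non-decreasing. The theorem deliberately avoids that assumption: it allows any $\coup>0$ and any admissible $p$.

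Your claim that $Q''$ is bounded is false. It is compactly supported, but $Q''(r)=q'(r)/r$. For $q=-b_p\psi$ this equals $-b_p/r$ on $(0,R_0]$. Since $W''=p'/r$ does not depend on the decomposition, it is unbounded below for any admissible $p$ whose derivative dips below a negative constant along a sequence $r_k\to 0$. Even where $Q''$ is bounded, its negative part is of size $\sup(-p'(r)/r)$ on the spinodal region. Absorbing it into $\varepsilon\coup|\nablax\rho|^2$ is exactly the requirement that $\Peff$ be monotone. ``Enlarging the artificial term'' is not a fix unless you show that the added dissipation dominates near $\rho=0$ and leaves no defect in the limiting energy inequality.

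The missing idea is the paper's.
\begin{itemize}
\item Put only the convex part $H$ (and $\frac{\artprs}{\artgrwth-1}\rho^\artgrwth$) into the approximate energy $E_{\mathrm{m},\artprs}$, and drop $\varepsilon\int H''(\rho)|\nablax\rho|^2\ge 0$.
\item Move $\int_\Omega q(\rho)\div\uvec$ to the right-hand side. Boundedness of $q$ alone gives $|\int_\Omega q(\rho)\div\uvec|\le C+\frac12\int_\Omega\Svisc(\nablax\uvec):\nablax\uvec$.
\item Absorb the cross term $\varepsilon\coup\int\nablax\rho\cdot\nablax c$ by $\frac{\varepsilon\coup}{2}|\nablax\rho|^2$ and close with Gr\"onwall.
\item After $\varepsilon\to 0$, use strong convergence of the density to pass to the limit in $\int\psi\, q(\rho)\div\uvec$.
\item Convert that term back into the $Q$-part of the energy via the renormalized equation $\partial_t Q(\rhod)+\div(Q(\rhod)\uvecd)+q(\rhod)\div\uvecd=0$. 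It holds without the $\varepsilon\Deltax\rho$ term, so $Q''$ never enters, and you obtain the inequality with $W$.
\end{itemize}

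A smaller point concerns the $\artprs\to0$ step. You cannot use $\overline{\rho h(\rho)}\ge\rho\,\overline{h(\rho)}$ directly. With only the $L^{\frac{5\tgamma-3}{3}}$ bound, $\rhod\,h_\coup(\rhod)\sim\rhod^{\tgamma+1}$ is not equi-integrable when $\tgamma<3$. You need the truncated inequality $\overline{T_k h_\coup}\ge\overline{T_k}\,\overline{h_\coup}$ and then the limit $k\to\infty$, as you yourself anticipated.
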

It is remarkable that the global-in-time existence result \Cref{main result:ex wkSol} applies for the whole regime $\gamma \in (1,\infty)$, which is strictly larger than the regime $\gamma \in \big(\frac{d}{2},\infty\big)$ for which the global-in-time existence of finite energy weak solutions is known for the compressible NSE (see~\Cref{sec:Intro}).
This is attributable to the additional quadratic contributions in the energy functional corresponding to the rNSKE providing us with a suitable \emph{a priori} estimate for the density only in terms of suitable norms of the initial data (see~\eqref{def E(t)}).
We further emphasize that \Cref{main result:ex wkSol} does not impose any restrictions on the relaxation parameters $\coup,\paracoup\in (0,\infty)$. 
In particular, the artificial pressure function $\Peff$ is not required to be non-decreasing, although this can be guaranteed for pressure functions of Van-der-Waals type if $\coup\in(0,\infty)$ is chosen large enough.

%% file: exSol.tex
Our proof for \Cref{main result:ex wkSol} relies on the approximation method from \cite{FeireislNovotnyPetzeltova2001}.
More specifically, for two positive approximation parameters $\artprs,\artvisc>0$ and some constant $\artgrwth>0$ that is chosen suitably large (see Proposition~\ref{prop:Gal approx}), we first prove a suitable existence result for the regularized rNSKE
\begin{equation}\label{rNSK(delta,eps)}
\left\{
    \begin{aligned}
        &\partial_t \rho + \div(\rho\uvec) 
        = 
        \artvisc\Deltax \rho &&\text{in } \OmegaT,\\[0.15cm]
        &\partial_t(\rho\uvec) + \div(\rho\uvec\otimes\uvec) + \nablax p_\artprs(\rho) 
        +
        \artvisc \nablax\uvec\cdot \nablax\rho
        =
        \div\Svisc(\nablax\uvec) + \coup \rho\nablax(c-\rho)
        &&\text{in } \OmegaT,\\[0.15cm]
        &\paracoup\partial_tc - \kappa \Deltax c + \coup(c-\rho) 
        =
        0 &&\text{in } \OmegaT,\\
    \end{aligned}
\right.
\end{equation}
where $p_\artprs(r):= p(r) + \artprs r^\artgrwth$, subject to the boundary conditions
\begin{equation}\label{rNSK(delta,eps) BC}
        \nablax c\cdot \mathbf{n}_{\partial \Omega}=\nablax \rho \cdot \mathbf{n}_{\partial \Omega}=0,
        \quad \uvec
        = 0\quad \text{on } [0,T]\times\partial\Omega,
\end{equation}
and the initial conditions
\begin{equation}\label{rNSK(delta,eps) IC}
    \rho(0)=\rho_0,\quad (\rho\uvec)(0)=(\rho\uvec)_0,\quad c(0) = c_0 \quad \text{in } \Omega.
\end{equation}
We will use this existence result to approximate a finite energy weak solution by performing two consecutive limits:
First, we perform the vanishing artificial viscosity limit $\artvisc\to 0$ and obtain a finite energy weak solution to the rNSKE with the regularized pressure function $p_\artprs$ (see~\Cref{sec:VanArtVisc});
second, we perform the artificial pressure limit $\artprs \to 0$ and obtain a finite energy weak solution to the rNSKE with the original pressure function (see~\Cref{sec:VanArtPres}).
\par
The regularized rNSKE \eqref{rNSK(delta,eps)}--\eqref{rNSK(delta,eps) IC} also satisfy a suitable energy inequality.
However, due to the artificial viscosity term and the possible non-convexity of the pressure potential $W$, we will use the pressure decomposition in \eqref{dec prs} splitting the pressure potential $W$ into a convex and a non-convex part (see~\eqref{def press pot H Q W}).
For some positive time $T>0$, we fix a hypothetical smooth solution $(\rho,\uvec,c)$ of \eqref{rNSK(delta,eps)}--\eqref{rNSK(delta,eps) IC} with $\rho>0$ living on $\OmegaT$ emanating from the smooth initial conditions $(\rho_0,(\rho\uvec)_0,c_0)$ with $\rho_0>0$.
Then, similarly as in Section~\ref{sec:wkSol}, we deduce the energy identity
\begin{equation}\label{E_m energy eq smth}
\begin{aligned}
    &\frac{\dd}{\dd t} E_{\mathrm{m},\artprs}[\rho,\rho\uvec,c]
    +
    \int_\Omega 
    \Svisc(\nablax\uvec):\nablax\uvec 
    +
    \paracoup |\partial_t c|^2 
    +
    \artvisc \coup|\nablax \rho|^2 
    +
    \artgrwth \artvisc \artprs \rho^{\artgrwth-2}|\nablax\rho|^2
    \, \dd x
    \\
    &\quad=
    \int_\Omega 
    q(\rho) \div\uvec
    +
    \artvisc\coup \nablax \rho \cdot \nablax c
    \, \dd x \quad \text{on } (0,T),
\end{aligned}
\end{equation}
with
\begin{equation}\label{def E_m(t)}
    E_{\mathrm{m},\artprs}[\rho,\rho\uvec,c] 
    :=
    \int_\Omega
    \frac{|\rho\uvec|^2}{2\rho} + H(\rho) + \frac{\artprs}{\artgrwth-1}\rho^\artgrwth + \frac{\coup}{2}|\rho-c|^2 + \frac{\kappa}{2} |\nablax c|^2 
    \, \dd x.
\end{equation}
\par
In the sequel, we will only prove \Cref{main result:ex wkSol} in the case $d=3$.
The proof for the case $d=2$ follows the same lines with minor modifications that do not produce any further difficulties.
\par
In this subsection we construct global-in-time solutions to the regularized system \eqref{rNSK(delta,eps)}--\eqref{rNSK(delta,eps) IC}.
The precise statement that we prove in this section reads as follows:
\begin{proposition}\label{prop:wkSol rNSK(delta,eps)}
    Let $\coup,\paracoup,T,\kappa,\shearvisc>0$, $\bulkvisc\geq0$, $\artprs,\artvisc\in(0,1)$, and let $\Omega \subseteq \RR^3$ be a bounded domain with $\partial\Omega \in C^{2,\nu}$ for some $\nu \in (0,1]$.
    Let $p$ be an admissible pressure function with growth rate $\gamma \in  (1,\infty)$ satisfying the decomposition \eqref{dec prs}--\eqref{dec prs grwth h}, let $\artgrwth >\max\left\{15,\tgamma\right\}$, and denote
    \begin{equation}\label{def p_coup,delta}
        p_{\coup,\artprs}(r) := p_\coup(r) + \artprs r^\artgrwth \quad \forall\, r\in [0,\infty).
    \end{equation}
    Let $0<\underline{\rho} < \overline{\rho}<\infty$ denote two constants, and suppose that we are given initial conditions $\rho_0 \in W^{1,\infty}(\Omega)$, $(\rho\uvec)_0 \in L^2(\Omega;\RR^3)$, $c_0 \in W^{1,2}(\Omega)$ satisfying  $\underline{\rho}\leq \rho_0\leq \overline{\rho}$.
    Let $D_0>0$ be a positive constant such that
    \begin{equation}\label{init energy bd}
        \|\rho_0\|_{L^{\tgamma}(\Omega)} 
        +
        \bigg\|\frac{|(\rho\uvec)_0|^2}{\rho_0}\bigg\|_{L^1(\Omega)}
        +
        \|c_0\|_{W^{1,2}(\Omega)}
        +
        \artprs^\frac{1}{\artgrwth}\|\rho_0\|_{L^{\artgrwth}(\Omega)}     
        \leq \constD.
    \end{equation}
    Then there exists a triplet $(\rho,\uvec,c)$ such that
    \begin{enumerate}
        \item we have the regularity
        \begin{equation}\label{reg wksol rNSK(delta,eps)}
            \begin{aligned}
                &\rho\in \Cw([0,T];L^\artgrwth(\Omega))\cap L^{\frac{5}{3}\artgrwth}\left(\Omega_T\right),
                \quad 
                \rho\geq 0 \,\,\,\text{a.e.},
                \\
                &
                \rho^{\frac{\artgrwth}{2}}\in L^2(0,T;W^{1,2}(\Omega)),
                \quad 
                \partial_t \rho \in L^{s(\artgrwth)}(\Omega_T),
                \quad
                \nablax^2 \rho \in L^{s(\artgrwth)}(\Omega_T),
                \\
                & 
                \uvec\in L^2(0,T;W^{1,2}_0(\Omega;\RR^3)),
                \quad 
                c\in L^2(0,T;W^{2,2}(\Omega))\cap C([0,T];W^{1,2}(\Omega)),
                \\ 
                &\rho\uvec \in \Cw([0,T];L^{\frac{2\artgrwth}{\artgrwth+1}}(\Omega;\RR^3)),
                \quad
                \rho\uvec,\, \nablax \rho  \in L^{s(\artgrwth)}(0,T;E^{r(\artgrwth),s(\artgrwth)}_0(\Omega)),
                \\
                &
                \partial_t c \in L^2(\OmegaT),
            \end{aligned}
        \end{equation}
        where
        \begin{equation}\label{values r,s}
            3 < r(\artgrwth):= \frac{10\artgrwth-6}{3\artgrwth+3} < \frac{10}{3},
            \quad 
            \frac{6}{5} < s(\artgrwth):= \frac{5\artgrwth-3}{4\artgrwth} < \frac{5}{4};
        \end{equation}
        \item the regularized rNSKE \eqref{rNSK(delta,eps)} are satisfied in the weak sense, that is,
        \begin{align}
            &\int_0^T \int_\Omega 
            \rho 
            \partial_t \varphi
            +
            \rho\uvec \cdot \nablax \varphi
            -
            \artvisc\nablax\rho\cdot\nablax \varphi
            \, \dd x \, \dd t
            =
            0
            \qquad \forall\, \varphi \in C^\infty_c(\RR_T^3),\label{wk sol cont(eps,delt)}
            \\
            &\int_0^T\int_\Omega 
            \rho\uvec \cdot \partial_t\varphivec
            +
            \rho\uvec\otimes\uvec:\nablax\varphivec
            +
            p_{\coup,\artprs}(\rho) \div\varphivec
            -
            \artvisc\nablax\rho\cdot \nablax\uvec\cdot \varphivec
            \, \dd x \, \dd t\nonumber
            \\
            &\qquad=
            \int_0^T \int_\Omega 
            \Svisc(\nablax\uvec):\nablax\varphivec -\coup \rho \nablax c \cdot \varphivec
            \, \dd x \, \dd t 
            \qquad \forall \, \varphivec\in C^\infty_c(\OmegaT;\RR^3),\label{wk sol mom(eps,delt)}
            \\
            &\int_0^T \int_\Omega 
            \paracoup c\partial_t \varphi 
            -
            \kappa \nablax c\cdot \nablax \varphi 
            -
            \coup(c-\rho) \varphi
            \, \dd x \, \dd t
            = 0  \qquad\forall \, \varphi \in C^\infty_c(\RR_T^3); \label{weak sol parab(eps,delt)}
        \end{align}
        \item the initial conditions \eqref{rNSK(delta,eps) IC} hold in the sense that
        \begin{equation}\label{wkSol rNSK(eps,delt) IC}
            \rho(0)=\rho_0,
            \quad 
            (\rho\uvec)(0)=(\rho\uvec)_0,
            \quad 
            c(0)=c_0
            \quad \text{a.e.~in } \Omega;
        \end{equation}
        \item the energy inequality corresponding to \eqref{E_m energy eq smth} holds in the weak sense, that is,
        \begin{equation}\label{wkSol rNSK(eps,delt) energy}
            \begin{aligned}
                &-\int_0^T E_{\mathrm{m},\artprs}[\rho,\rho\uvec,c]\partial_t \psi 
                \, \dd \tau
                +
                \int_0^T \psi \int_\Omega 
                \Svisc(\nablax\uvec):\nablax\uvec
                +
                \paracoup|\partial_t c|^2
                \, \dd x \, \dd t\\
                &\quad +
                \int_0^T \psi \int_\Omega 
                \artvisc\coup |\nablax \rho|^2
                +
                \artvisc\artprs \artgrwth \rho^{\artgrwth-2} |\nablax\rho|^2
                \, \dd x \, \dd t
                \\
                & \leq 
                E_{\mathrm{m},\artprs}[\rho_0,(\rho\uvec)_0,c_0] \psi(0)
                +
                \int_0^T \psi \int_\Omega 
                q(\rho) \div\uvec + \artvisc\coup \nablax\rho \cdot \nablax c 
                \, \dd x \, \dd t,
            \end{aligned}
        \end{equation}
        for any $\psi \in C^\infty_c([0,T))$ with $\psi \geq 0$, where $E_{\mathrm{m},\artprs}[\cdot,\cdot,\cdot]$ is defined in \eqref{def E_m(t)};
        \item we have the bounds
        \begin{equation}\label{wkSol rNSK(eps,delta) unif-bds-I}
            \begin{aligned}
                &\left\|\sqrt{\rho}\uvec\right\|_{L^\infty(0,T;L^2(\Omega))}
                +
                \left\|\rho\right\|_{L^\infty(0,T;L^\tgamma(\Omega))}
                +
                \|c\|_{L^\infty(0,T;W^{1,2}(\Omega))}
                +
                \|\uvec\|_{L^2(0,T;W^{1,2}(\Omega))}
                \\
                &\quad 
                +
                \left\|\partial_t c\right\|_{L^2(\OmegaT)}
                +
                \|c\|_{L^2(0,T;W^{2,2}(\Omega))}
                +
                \artprs^{\frac{1}{\artgrwth}}\|\rho\|_{L^\infty(0,T;L^{\artgrwth}(\Omega))}
                \\
                &\quad
                +
                \sqrt{\artvisc}\|\nablax \rho\|_{L^2(\OmegaT)}
                \leq 
                C(D_0),
            \end{aligned}
        \end{equation}
        and
        \begin{equation}\label{wkSol rNSK(eps,delta) unif-bds-II}
            \begin{aligned}
                &\|\rho\uvec\|_{L^{r(\artgrwth)}(\OmegaT)}
                +
                \artvisc\|\nablax\rho\|_{L^{r(\artgrwth)}(\OmegaT)}
                +
                \artvisc\|\div(\rho\uvec)\|_{L^{s(\artgrwth)}(\OmegaT)}
                +
                \artvisc\|\nablax\rho \cdot \nablax \uvec\|_{L^{s(\artgrwth)}(\OmegaT)}
                \\
                &
                \leq 
                C(\constD,\artprs).
            \end{aligned}
        \end{equation}
    \end{enumerate}
    \end{proposition}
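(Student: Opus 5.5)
We construct the solution by a Faedo--Galerkin scheme in the spirit of \cite{FeireislNovotnyPetzeltova2001}. For $n\in\NN$ let $X_n$ be the span of the first $n$ eigenfunctions of the Dirichlet Laplacian and $\proj$ the orthogonal projection onto $X_n$. For a given $\uvec_n\in C([0,T];X_n)$, the continuity equation $\eqref{rNSK(delta,eps)}_1$ with Neumann data and initial datum $\rho_0\in W^{1,\infty}$ is a linear parabolic problem. Its classical solution $\rho=\rho[\uvec_n]$ satisfies, by the maximum principle, the bounds $\underline{\rho}\exp(-\int_0^t\|\div\uvec_n\|_\infty)\le\rho\le\overline{\rho}\exp(\int_0^t\|\div\uvec_n\|_\infty)$. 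Moreover, $\uvec_n\mapsto\rho$ is Lipschitz on bounded sets.

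For given $\rho$, the equation $\eqref{rNSK(delta,eps)}_3$ is a linear heat equation with Neumann data. By \cite[Theorem~11.29]{FeireislNovotny2017singlim} its solution $c=c[\rho]$ lies in $L^2(W^{2,2})$ with $\partial_t c\in L^2$, again depending Lipschitz continuously on $\rho$. We then solve the projected momentum equation (tested by $X_n$, with the term $\artvisc\nablax\rho\cdot\nablax\uvec$ included) by a Banach fixed point on a short interval $[0,\Tloc]$. Since the Galerkin energy identity \eqref{E_m energy eq smth} holds exactly at this level, we can extend to $[0,T]$, because the a priori bounds control $\|\uvec_n\|_{X_n}$ (all norms on $X_n$ are equivalent).

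The energy identity at the Galerkin level is \eqref{E_m energy eq smth}. Its right-hand side is handled as follows. The term $\int q(\rho)\div\uvec$ is bounded by $C\|q\|_\infty^2+\frac{\mu}{2}\|\nablax\uvec\|^2$ via Korn. For the other term we use
\[
\artvisc\coup\nablax\rho\cdot\nablax c\le\frac{\artvisc\coup}{2}|\nablax\rho|^2+\frac{\artvisc\coup}{2}|\nablax c|^2 .
\]
The first part is absorbed into the dissipation $\artvisc\coup|\nablax\rho|^2$, and the second part is absorbed by Gr\"onwall through $E_{\mathrm{m},\artprs}$. This is exactly the point the introduction emphasizes: the artificial-viscosity error is controlled by the dissipative term. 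Using \eqref{bounds W,H} and the mean-value argument for $c$ from \Cref{sec:wkSol}, we obtain \eqref{wkSol rNSK(eps,delta) unif-bds-I} uniformly in $n$, with parabolic regularity giving $c\in L^2(W^{2,2})$.

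The bound \eqref{wkSol rNSK(eps,delta) unif-bds-II} and the $\rho$-regularity in \eqref{reg wksol rNSK(delta,eps)} come from the standard $\artvisc$-level bootstrap. First, the dissipation gives $\rho^{\artgrwth/2}\in L^2(W^{1,2})$ and $\rho\in L^\infty(L^\artgrwth)$, and interpolation yields $\rho\in L^{\frac53\artgrwth}(\OmegaT)$. Then $\rho\uvec\in L^2(L^{6\artgrwth/(\artgrwth+6)})\cap L^\infty(L^{2\artgrwth/(\artgrwth+1)})$ interpolates to $L^{r(\artgrwth)}$. Finally, maximal $L^p$–$L^q$ regularity for the Neumann heat equation $\partial_t\rho-\artvisc\Deltax\rho=-\div(\rho\uvec)$ gives $\partial_t\rho,\nablax^2\rho\in L^{s(\artgrwth)}$ and $\artvisc\nablax\rho\in L^{r(\artgrwth)}$. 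Here $\div(\rho\uvec)=\rho\div\uvec+\uvec\cdot\nablax\rho$ is estimated in $L^{s}$ using the preceding bounds, and the requirement $\artgrwth>15$ is needed for the exponents to close. Each of these bounds is uniform in $n$.

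The passage $n\to\infty$ goes as follows. The strong compactness of $\rho_n$ in $L^p(\OmegaT)$ comes from the $\nablax\rho$ and $\partial_t\rho$ bounds via Aubin--Lions, and the same argument gives compactness of $c_n$ in $L^2(W^{1,2})$. Strong compactness of $\rho_n$ makes all pressure terms converge. For the convective term we use that $\rho_n\uvec_n$ converges in $C_{\mathrm{w}}(L^{2\artgrwth/(\artgrwth+1)})$, combined with the compact embedding into $W^{-1,2}$, to pass in $\rho_n\uvec_n\otimes\uvec_n$.

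The main obstacle is the term $\artvisc\nablax\rho_n\cdot\nablax\uvec_n$, since $\nablax\uvec_n$ converges only weakly. We handle it by proving strong $L^2$ convergence of $\nablax\rho_n$. This follows by comparing the energy identities for $\rho_n^2$ and $\rho^2$ from the renormalized continuity equation, which shows $\|\nablax\rho_n\|_{L^2}\to\|\nablax\rho\|_{L^2}$.

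The energy inequality \eqref{wkSol rNSK(eps,delt) energy} follows from weak lower semicontinuity. Its right-hand side converges, because $q(\rho_n)\to q(\rho)$ strongly and $\nablax\rho_n\to\nablax\rho$ strongly. The initial data and the $C_{\mathrm{w}}$-continuity are recovered by standard Arzel\`a--Ascoli type arguments in weak topologies.
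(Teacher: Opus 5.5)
Your proposal is correct. Its architecture (Galerkin scheme, Banach fixed point plus continuation, absorbing $\artvisc\coup\nablax\rho\cdot\nablax c$ into the dissipation and applying Gr\"onwall, the mean-value argument plus parabolic regularity for $c$, the $L^{r(\artgrwth)}$/$L^{s(\artgrwth)}$ bootstrap) is the paper's, and the substantive differences lie in the limit $n\to\infty$. First, for $\artvisc\nablax\rho_n\cdot\nablax\uvec_n$, which you call the main obstacle, the paper needs no extra idea. The $n$-uniform bounds on $\partial_t\rho_n$ and $\nablax^2\rho_n$ in $L^{s(\artgrwth)}(\OmegaT)$, which you also derive, give $\rho_n\to\rho$ in $L^{s(\artgrwth)}(0,T;W^{1,s(\artgrwth)}(\Omega))$ by Aubin--Lions. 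Interpolation with the $L^{r(\artgrwth)}$ bound upgrades this to $\nablax\rho_n\to\nablax\rho$ in $L^{r(\artgrwth)^-}(\OmegaT)$, which suffices since $r(\artgrwth)>2$. Your $L^2$-energy argument is also valid and uses only $L^2$-level information: you get $\limsup_n\artvisc\|\nablax\rho_n\|^2_{L^2(\OmegaT)}\leq\artvisc\|\nablax\rho\|^2_{L^2(\OmegaT)}$ from weak lower semicontinuity of $\|\rho_n(T)\|_{L^2(\Omega)}$, from $\rho_n\to\rho$ in $L^4(\OmegaT)$ in $\int_0^T\!\int_\Omega\rho_n^2\div\uvec_n$, and from testing the limit equation by $\rho$, which is allowed since $\rho\uvec\in L^2(\OmegaT)$. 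Given your own bounds, however, it is redundant. Second, for the convective term the paper applies Aubin--Lions to $\proj(\rho_n\uvec_n)$ and controls $\rho_n\uvec_n-\proj(\rho_n\uvec_n)$ by the projection error estimate and the bound of $\rho_n\uvec_n$ in $L^{s(\artgrwth)}(0,T;W^{1,2}(\Omega))$, obtaining strong convergence of the momenta in $L^{r(\artgrwth)^-}(\OmegaT)$. That bound uses $W^{1,r(\artgrwth)}(\Omega)\hookrightarrow L^\infty(\Omega)$, i.e.\ $r(\artgrwth)>3$, which is exactly $\artgrwth>15$. So within this proof, that is where the hypothesis enters, not in closing the bootstrap exponents as you suggest. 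Your classical route (equicontinuity of $t\mapsto\int_\Omega\rho_n\uvec_n\cdot\psivec\,\dx$ for fixed $\psivec\in X_m$, density, $\Cw$-convergence, compact embedding into $W^{-1,2}(\Omega)$) avoids this requirement and gives only $\rho_n\uvec_n\to\rho\uvec$ in $C([0,T];W^{-1,2}(\Omega))$. That is enough for $\rho_n\uvec_n\otimes\uvec_n$, and it also gives $\int_\Omega\rho_n|\uvec_n|^2\,\dx\to\int_\Omega\rho|\uvec|^2\,\dx$ weakly-$\ast$ in $L^\infty(0,T)$. You need this to pass to the limit in $-\int_0^T E_{\mathrm{m},\artprs}\,\partial_t\psi\,\dd t$; lower semicontinuity alone does not suffice there, since $\partial_t\psi$ has no sign.
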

    \subsection{Galerkin approximation}\label{subsec:Galerkin Approx}
    To prove Proposition~\ref{prop:wkSol rNSK(delta,eps)}, we extend the method in \cite[Chapter~2]{FeireislNovotnyPetzeltova2001} to the rNSKE.
    More specifically, we use a Galerkin approximation for the momentum equation $\eqref{NSK(alpha,beta) system}_2$ while solving the continuity equation $\eqref{NSK(alpha,beta) system}_1$ and the parabolic equation $\eqref{NSK(alpha,beta) system}_3$ directly.
    To formulate this approximation, we have to fix some notations.
    We introduce the function spaces
    \begin{equation}\label{spaces Gal approx}
    \begin{aligned}
    &\mathcal{Q}_T:=\left\{d\mid d \in L^2(0,T;W^{2,2}(\Omega))\cap C([0,T];W^{1,2}(\Omega)),\, \partial_t d \in L^2(\OmegaT)\right\},
    \\
    &\mathcal{R}_T:=
                \left\{
                r\mid r \in L^2(0,T;W^{2,\infty^-}(\Omega))\cap C([0,T];W^{1,\infty^-}(\Omega)),\,
                \partial_t r \in L^2(0,T;L^{\infty^-}(\Omega))
                \right\}.
    \end{aligned}
    \end{equation}
    Furthermore, by standard theory on elliptic PDEs (see e.g.~\cite[Section~4.7]{NovotnyStraskraba2004}), there exist countable sets
    $\{\lambda_i\}_{i=1}^\infty$, $\{\psivec_i\}_{i=1}^{\infty}$ satisfying
    \begin{equation*}
    \begin{aligned}
        &\psivec_i \in W^{1,\infty^-}_0(\Omega;\RR^3)\cap W^{2,\infty^-}(\Omega;\RR^3),
        \quad
        -\div\mathbb{S}(\nablax \psivec_i)=\lambda_i \psivec_i \quad \text{a.e.~in } \Omega,
    \end{aligned}
    \end{equation*}
    with $\{\psivec_i\}_{i=1}^\infty$ being an orthonormal basis of $L^2(\Omega;\RR^3)$ with respect to the standard scalar product on $L^2(\Omega;\RR^3)$, and an orthogonal basis of $W^{1,2}_0(\Omega;\RR^3)$ with respect to the scalar product
    \begin{equation*}
        \left\langle \mathbf{v},\mathbf{w} \right\rangle_{W^{1,2}_0} := \int_\Omega \mathbb{S}(\nablax\mathbf{v}):\nablax \mathbf{w} \, \dd x \quad \forall\, \mathbf{v},\mathbf{w} \in W^{1,2}_0(\Omega;\RR^3).
    \end{equation*}
    Accordingly, we introduce for $n \in \NN$ the finite dimensional vector spaces
    \begin{equation*}
        X_n := \mathrm{span}_\RR\{\psivec_i\}_{i=1}^n,
    \end{equation*}
    equipped with the standard $L^2(\Omega;\RR^3)$-scalar product.
    Furthermore, we denote by 
    \begin{equation*}
        \proj\colon L^2(\Omega;\RR^3) \to X_n
    \end{equation*}
    the orthogonal projection of $L^2(\Omega;\RR^3)$ onto $X_n$.
    Since $X_n$ is a finite dimensional space, we have that
    \begin{equation}\label{equiv norms Xn}
        \|\cdot\|_{W^{k,p}(\Omega)} \text{ and } \|\cdot\|_{X_n} 
        \text{ are equivalent on } X_n \text{ for any } k\in \NN\cup\{0\}
        \text{ and any } p \in [1,\infty].
    \end{equation}
    Moreover, we have that the projection $\proj$ satisfies
    \begin{equation}\label{prpties proj}
    \begin{aligned}
        &\int_\Omega \proj(\vvec) \cdot \mathbf{w}\, \dd x = \int_\Omega \vvec \cdot \proj(\mathbf{w})\, \dd x ,
        \quad \forall \vvec,\mathbf{w} \in L^2(\Omega),
        \qquad 
        \|\proj\|_{\mathcal{L}(L^2(\Omega);L^2(\Omega))}\leq 1,
        \\
        &\proj \vvec\to \vvec \quad \text{in } L^2(\Omega)\quad \forall\, \vvec\in L^2(\Omega),
        \qquad 
        \proj \mathbf{w}\to \mathbf{w} \quad \text{in } W^{1,2}(\Omega) \quad \forall\, \mathbf{w}\in W_0^{1,2}(\Omega),
        \\
        &\sup\limits_{\mathbf{v}\in W^{1,s}(\Omega),\, \vvec\neq 0} 
        \left(\frac{\|\proj\vvec - \vvec\|_{L^2(\Omega)}}{\|\vvec\|_{W^{1,s}(\Omega)}}\right) \xrightarrow{n \to \infty} 0
        \quad \forall s \in \left(\frac{6}{5},\infty\right),
        \\
        &\|\proj\vvec\|_{W^{k,2}(\Omega)}\leq C \|\vvec\|_{W^{k,2}(\Omega)} \quad \forall \, \vvec\in W^{1,2}_0(\Omega)\cap W^{k,2}(\Omega),\quad k\in \{1,2\}.
    \end{aligned}
    \end{equation}
    We refer to \cite{NovotnyStraskraba2004} for the corresponding proofs.
    \par
    In what follows, the parameter $n\in\NN$ plays the role of an approximation parameter.
    We aim to construct a sequence of solutions $\{(\rho_n,\uvec_n,c_n)\}_{n\in\NN}$ that approximates a finite energy weak solution to the regularized rNSKE \eqref{rNSK(delta,eps)}--\eqref{rNSK(delta,eps) IC}.
    The precise statement that guarantees the global-in-time existence of such an approximate sequence of solutions reads as follows:
    \begin{proposition}\label{prop:Gal approx}
        Let $n\in \NN$ and denote
        \begin{equation*}
             \uvec_{0n} := \proj\left( \frac{(\rho\uvec)_0}{\rho_0} \right) \in X_n.
        \end{equation*}
        Under the hypotheses and notations of Proposition~\ref{prop:wkSol rNSK(delta,eps)}, there exists a unique triplet $(\rho_n,\uvec_n,c_n)\in \mathcal{R}_T\times C([0,T];X_n)\times \mathcal{Q}_T$ with $\partial_t \uvec_n \in L^2(0,T;X_n)$ such that
        \begin{enumerate}
        \item we have
        \begin{equation}\label{Gal approx cont+parab}
            \begin{aligned}
                &\partial_t \rho_n + \div(\rho_n\uvec_n) = \artvisc\Deltax \rho_n &&\text{a.e.~in } \OmegaT,\\
                &\paracoup\partial_t c_n - \kappa \Deltax c_n + \coup (c_n-\rho_n) = 0 &&\text{a.e.~in } \OmegaT,
            \end{aligned}
        \end{equation}
        \begin{equation}\label{Gal approx cont+parab BC}
            \begin{aligned}
                \nablax \rho_n(\tau) \cdot \mathbf{n}_{\partial\Omega } = \nablax c_n(\tau) \cdot \mathbf{n}_{\partial\Omega} = 0 \quad \text{in the sense of traces for a.e.~} \tau \in (0,T),
            \end{aligned}
        \end{equation}
        and
        \begin{equation}\label{Gal approx cont+parab IC}
            \rho_n(0) = \rho_0,
            \quad 
            c_n(0) = c_0 
            \qquad \text{a.e.~in } \Omega;
        \end{equation}
        \item we have a.e.~in $(0,T)$ that
        \begin{equation}\label{Gal approx mom}
            \begin{aligned}
                &\int_\Omega 
                \Bigl(
                    \partial_t(\rho_n\uvec_n) + \div(\rho_n\uvec_n\otimes\uvec_n) + \nablax p_{\coup,\artprs}(\rho_n) + \artvisc\nablax \rho_n\cdot\nablax\uvec_n
                \Bigr)
                \cdot \psivec\, \dd x
                \\
                &\quad -
                \int_\Omega \Bigl( \div\mathbb{S}(\nablax\uvec_n) +\coup\rho_n\nablax c_n \Bigr) \cdot \psivec\, \dd x = 0
                \qquad \forall \, \psivec\in X_n,
            \end{aligned}
        \end{equation}
        and
        \begin{equation}\label{Gal approx mom IC}
            \uvec_n(0)=\uvec_{0n}\quad \text{in } \Omega;
        \end{equation}
        \item we have 
        \begin{equation}\label{Gal approx Energy}
            \begin{aligned}
                &\frac{\dd}{\dd t} E_{\mathrm{m},\artprs}[\rho_n,\rho_n\uvec_n,c_n]
                \\
                &\quad+
                \int_\Omega 
                \Svisc(\nablax\uvec_n):\nablax\uvec_n+\paracoup|\partial_t c_n|^2 + \artvisc\coup|\nablax \rho_n|^2 + \artvisc\artprs\artgrwth \rho^{\artgrwth-2} |\nablax \rho_n|^2
                \, \dd x 
                \\
                &\leq 
                \int_\Omega 
                q(\rho_n)\div\uvec_n + \artvisc\coup\nablax\rho_n\cdot\nablax c_n
                \, \dd x
                \qquad \text{a.e.~in } (0,T),
            \end{aligned}
        \end{equation}
        where $E_{\mathrm{m},\artprs}[\cdot,\cdot,\cdot]$ is defined in \eqref{def E_m(t)}.
        \end{enumerate}
    \end{proposition}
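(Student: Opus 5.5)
We follow the classical Faedo--Galerkin scheme of \cite[Chapter~2]{FeireislNovotnyPetzeltova2001}: first construct a solution on a short interval by a fixed point argument, then extend it to $[0,T]$ using the energy inequality \eqref{Gal approx Energy}.

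\textbf{Solution operators.} Fix $n$ and a velocity $\vvec\in C([0,T];X_n)$. By \eqref{equiv norms Xn}, $\vvec$ is smooth in space with $\|\vvec\|_{W^{2,\infty}}\leq C(n)\|\vvec\|_{L^2}$. The Neumann problem
\begin{equation*}
\partial_t\rho+\div(\rho\vvec)=\artvisc\Deltax\rho,\qquad \rho(0)=\rho_0,
\end{equation*}
is then a linear parabolic problem with regular coefficients. By maximal $L^p$ regularity, with $p<\infty$ arbitrary and $\rho_0\in W^{1,\infty}$, it has a unique solution $\rho=\mathcal{S}(\vvec)\in\mathcal{R}_T$. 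The maximum principle gives
\begin{equation*}
\underline{\rho}\,e^{-\int_0^t\|\div\vvec\|_\infty}\leq\rho\leq\overline{\rho}\,e^{\int_0^t\|\div\vvec\|_\infty},
\end{equation*}
so $\rho$ stays positive. The map $\mathcal{S}$ is Lipschitz on bounded sets of $C([0,T];X_n)$ into $C([0,T];L^2)$, by a linear energy estimate for the difference. Given $\rho$, the linear heat-type Neumann problem for $c$ has a unique solution $c=\mathcal{T}(\rho)\in\mathcal{Q}_T$ by standard parabolic theory, and $\mathcal{T}$ is linear and bounded in $\rho$.

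\textbf{Fixed point.} We rewrite \eqref{Gal approx mom} as the ODE system on $X_n$
\begin{equation*}
\frac{\dd}{\dd t}\mathcal{M}[\rho(t)]\uvec(t)=\proj\bigl(\nonlin(\rho,\uvec,c)\bigr),
\end{equation*}
where $\mathcal{M}[\rho]\vvec:=\proj(\rho\vvec)$ and $\nonlin$ collects the remaining terms. Since $\rho\geq\underline{\rho}e^{-Ct}>0$, the operator $\mathcal{M}[\rho]$ is invertible on $X_n$, with inverse Lipschitz in $\rho$. Integrating in time gives
\begin{equation*}
\uvec(t)=\mathcal{M}[\rho(t)]^{-1}\Bigl(\proj(\rho_0\uvec_{0n})+\int_0^t\proj\,\nonlin\Bigr).
\end{equation*}
With $\rho=\mathcal{S}(\uvec)$ and $c=\mathcal{T}(\mathcal{S}(\uvec))$, this is a contraction on a ball in $C([0,\Tloc];X_n)$ for small $\Tloc$, by Banach's theorem. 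The needed Lipschitz estimates on the nonlinear terms follow because all norms on $X_n$ are equivalent and $\rho$ is bounded above and below. Among these terms are $\nablax p_{\coup,\artprs}(\rho)$ and $\rho\nablax c$, which needs $\nablax c\in L^2(0,T;W^{1,2})$, and $\artvisc\nablax\rho\cdot\nablax\uvec$. Uniqueness follows from the same estimates. Testing \eqref{Gal approx mom} with $\psivec=\uvec_n$ is legitimate. Combining with the continuity equation renormalized by $H'(\rho_n)+\frac{\artprs\artgrwth}{\artgrwth-1}\rho_n^{\artgrwth-1}+\coup\rho_n$, and with the $c$-equation tested by $\partial_t c_n$, we recover \eqref{E_m energy eq smth} for the Galerkin solution. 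This gives \eqref{Gal approx Energy}, even with equality.

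\textbf{Global extension (main obstacle).} To extend to $[0,T]$ we need a bound on $\|\uvec_n\|_{L^\infty(0,t;L^2)}$ that does not blow up, since $\rho$ and $c$ are then controlled through $\mathcal{S}$ and $\mathcal{T}$. The right-hand side of \eqref{Gal approx Energy} must be absorbed:
\begin{equation*}
\Bigl|\int_\Omega q(\rho_n)\div\uvec_n\,\dd x\Bigr|\leq C+\tfrac{1}{2}\int_\Omega\Svisc(\nablax\uvec_n):\nablax\uvec_n\,\dd x,
\end{equation*}
using boundedness of $q$ and Korn's inequality. For the other term,
\begin{equation*}
\artvisc\coup\int_\Omega\nablax\rho_n\cdot\nablax c_n\,\dd x\leq\tfrac{\artvisc\coup}{2}\|\nablax\rho_n\|_{L^2}^2+\tfrac{\artvisc\coup}{2}\|\nablax c_n\|_{L^2}^2.
\end{equation*}
The first part is absorbed by the dissipation $\artvisc\coup|\nablax\rho_n|^2$. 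The second part is bounded by $\frac{\artvisc\coup}{\kappa}E_{\mathrm{m},\artprs}$, so Gr\"onwall applies. Together with $\rho_n\geq\underline\rho e^{-C(n)t}$, this yields an a priori bound on $E_{\mathrm{m},\artprs}[\rho_n,\rho_n\uvec_n,c_n]$, and hence on $\|\uvec_n\|_{X_n}$, on $[0,T]$, depending only on $n$ and the data. The local solution can then be continued step by step up to $T$, completing the proof of Proposition~\ref{prop:Gal approx}.
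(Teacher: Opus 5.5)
Your proposal follows the paper's proof essentially step by step: the solution operators $\mathcal{S}$ and $\mathcal{T}$, Banach's fixed point theorem for $\uvec_n=\mathcal{M}^{-1}_{\rho_n}\bigl(\proj(\rho_0\uvec_{0n})+\int_0^t\proj\nonlin\,\dd s\bigr)$ on a short interval, the energy inequality obtained by testing with $\uvec_n$, and continuation via Gr\"onwall's inequality. Two small corrections: first, the renormalization also produces the term $\artvisc\int_\Omega H''(\rho_n)|\nablax\rho_n|^2\,\dd x\geq 0$, so you get \eqref{Gal approx Energy} only as an inequality, not an equality; second, in the extension step the lower bound on $\rho_n$ is not a priori of the form $\underline{\rho}e^{-C(n)t}$ but follows, as in the paper, from $\int_0^t\|\nablax\uvec_n\|_{L^\infty(\Omega)}\,\dd s\leq C(n)\sqrt{t}\,\|\uvec_n\|_{L^2(0,t;W^{1,2}(\Omega))}$, which is controlled by the dissipation bound coming from Gr\"onwall.
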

    The proof of \Cref{prop:Gal approx} is based on a fixed point argument to obtain a local-in-time solution followed by a continuation argument based on suitable energy estimates in order to show that the local-in-time solution extends to a global-in-time one.
    For the fixed point argument, we need the following auxiliary result.
    \begin{lemma}\label{lem:sol operators}
        Let $T>0$ and let $\Omega \subseteq \RR^3$ be a bounded domain with $\partial\Omega \in C^{2,\nu}$ for some $\nu \in (0,1]$. Denote by $0<\underline{\rho}<\overline{\rho}<\infty$ two constants and suppose that initial conditions $\rho_0\in W^{1,\infty^-}(\Omega)$ and $c_0 \in W^{1,2}(\Omega)$ with $\underline{\rho}\leq \rho_0\leq \overline{\rho}$ are given.
        Then there exist two maps 
        \begin{align*}
            \mathcal{S}_{\rho_0}\colon L^\infty(0,T;W^{1,\infty}(\Omega)\cap W^{1,2}_0(\Omega))\to \mathcal{R}_T,
            &&
            \mathcal{T}_{c_0}\colon L^2(\OmegaT)\to \mathcal{Q}_T
        \end{align*}
        with the following properties:
        \begin{enumerate}
            \item [(i)]
            For any $\mathbf{v}\in L^\infty(0,T;W^{1,\infty}(\Omega)\cap W^{1,2}_0(\Omega))$, we have
            \begin{equation*}
            \begin{aligned}
                &\partial_t \mathcal{S}_{\rho_0}[\vvec] + \div\left(\mathcal{S}_{\rho_0}[\vvec]\vvec\right)
                =
                \artvisc \Deltax \mathcal{S}_{\rho_0}[\vvec] \quad \text{a.e.~in } \OmegaT,
                \\
                &\nablax \mathcal{S}_{\rho_0}[\vvec](\tau) \cdot \mathbf{n}_{\partial \Omega} = 0\quad \text{in the sense of traces for a.e.~} \tau \in (0,T),
                \\
                &\mathcal{S}_{\rho_0}[\vvec](0)=\rho_0 \quad \text{a.e.~in } \Omega.
            \end{aligned}
            \end{equation*}
            Moreover, we have for any $\tau \in [0,T]$ that
            \begin{equation}\label{S_rho0 infty bds}
                \underline{\rho}\exp\left(-\int_0^\tau \|\vvec\|_{W^{1,\infty}}\, \dt \right) 
                \leq 
                \mathcal{S}_{\rho_0}[\vvec](\tau) 
                \leq 
                \overline{\rho}\exp\left(\int_0^\tau \|\vvec\|_{W^{1,\infty}(\Omega)}\, \dt \right)
                \quad \text{a.e.~in } \Omega.
            \end{equation}
            Furthermore, we have for any $\vvec,\mathbf{w}\in L^\infty(0,T;W^{1,\infty}(\Omega)\cap W^{1,2}_0(\Omega))$ satisfying
            \begin{equation}\label{bound by K}
            \max\{\|\vvec\|_{L^\infty(0,T;W^{1,\infty}(\Omega))}, \|\mathbf{w}\|_{L^\infty(0,T;W^{1,\infty}(\Omega))}\}\leq K    
            \end{equation}
            for some constant $K>0$ that the estimates
            \begin{equation*}
                \left\|\mathcal{S}_{\rho_0}[\vvec]\right\|_{L^\infty(0,\tau;W^{1,2}(\Omega))}
                \leq 
                C\|\rho_0\|_{W^{1,2}(\Omega)} \exp\left(\frac{C}{2\artvisc}(K+K^2)\tau\right)
            \end{equation*}
            and
            \begin{equation}\label{S_rho0 diff est}
                \left\|\mathcal{S}_{\rho_0}[\vvec](\tau)-\mathcal{S}_{\rho_0}[\mathbf{w}](\tau)\right\|_{L^2(\Omega)}
                \leq 
                C(\artvisc,K) \tau \|\rho_0\|_{W^{1,2}(\Omega)}\|\vvec-\mathbf{w}\|_{L^\infty(0,\tau;W^{1,\infty}(\Omega))}
            \end{equation}
            hold for any $\tau \in [0,T]$.
            \item [(ii)]
            For any $r \in L^2(\OmegaT)$, we have
            \begin{equation}\label{T_c0 operator equations}
            \begin{aligned}
                &\paracoup\partial_t \mathcal{T}_{c_0}[r] - \kappa \Deltax \mathcal{T}_{c_0}[r] + \coup \mathcal{T}_{c_0}[r]
                =
                \coup r \quad \text{a.e.~in } \OmegaT,
                \\
                &\nablax\mathcal{T}_{c_0}[r](\tau) \cdot \mathbf{n}_{\partial \Omega} = 0 \quad \text{in the sense of traces for a.e.~} \tau \in (0,T),
                \\
                &\mathcal{T}_{c_0}[r](0)=c_0 \quad \text{a.e.~in } \Omega.
            \end{aligned}
            \end{equation}
            Moreover, we have for any $\tau \in [0,T]$ that
            \begin{equation*}
                \bigl\|\nablax(\mathcal{T}_{c_0}\circ \mathcal{S}_{\rho_0}[\vvec])(\tau)\bigr\|_{L^2(\Omega)}
                    \leq 
                    C
                    \left(
                    \|c_0\|_{W^{1,2}(\Omega)}
                    +
                    \overline{\rho}\exp \left( \int_0^\tau \|\vvec\|_{W^{1,\infty}(\Omega)}\, \dd t \right)
                    \right).
            \end{equation*}
            Furthermore, we have for any $\vvec, \mathbf{w}\in L^\infty(0,T;W^{1,\infty}(\Omega)\cap W^{1,2}_0(\Omega))$ satisfying \eqref{bound by K} that
            \begin{equation*}
                \begin{aligned}
                &\bigl\| \nablax(\mathcal{T}_{c_0}\circ\mathcal{S}_{\rho_0}[\vvec])(\tau) - \nablax ( \mathcal{T}_{c_0}\circ \mathcal{S}_{\rho_0}[\mathbf{w}])(\tau) \bigr\|_{L^2(\Omega)}
                \\
                &\leq 
                C(\artvisc,K)
                \tau^{\frac{3}{2}}
                \|\rho_0\|_{W^{1,2}(\Omega)}
                \|\vvec - \mathbf{w}\|_{L^\infty(0,\tau;W^{1,\infty}(\Omega))}
            \end{aligned}
            \end{equation*}
            for any $\tau \in [0,T]$.
        \end{enumerate}
    \end{lemma}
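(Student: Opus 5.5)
For part (i) I will follow the classical approach of \cite[Chapter~2]{FeireislNovotnyPetzeltova2001} and \cite{NovotnyStraskraba2004}. For fixed $\vvec\in L^\infty(0,T;W^{1,\infty}(\Omega)\cap W^{1,2}_0(\Omega))$, the equation $\partial_t\rho+\vvec\cdot\nablax\rho+\rho\,\div\vvec=\artvisc\Deltax\rho$ with homogeneous Neumann data is a linear parabolic equation with bounded coefficients. Maximal $L^p$-regularity for the Neumann Laplacian, together with a Galerkin or Schauder fixed point for the lower-order terms, gives a unique solution $\mathcal{S}_{\rho_0}[\vvec]$ in
\[
L^2(0,T;W^{2,p})\cap C([0,T];W^{1,p}), \qquad \partial_t\rho\in L^2(0,T;L^p)
\]
for every $p<\infty$. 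This places the solution in $\mathcal{R}_T$, using $\rho_0\in W^{1,\infty^-}$.

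The pointwise bounds \eqref{S_rho0 infty bds} come from the comparison principle. The functions
\[
\overline{\rho}\exp\Bigl(\int_0^t\|\vvec\|_{W^{1,\infty}}\Bigr), \qquad \underline{\rho}\exp\Bigl(-\int_0^t\|\vvec\|_{W^{1,\infty}}\Bigr)
\]
are a super- and a subsolution, since spatially constant functions are annihilated by the transport and diffusion parts and $|\div\vvec|\le\|\vvec\|_{W^{1,\infty}}$. I will test the equation for the difference with its positive part, and the Neumann condition removes the boundary terms.

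For the $W^{1,2}$ bound I will test with $\rho-\Deltax\rho$. The convective terms $\int\div(\rho\vvec)\Deltax\rho$ are absorbed by Young's inequality into $\frac{\artvisc}{2}\|\Deltax\rho\|_{L^2}^2$, leaving $\frac{C}{\artvisc}(K+K^2)\|\rho\|_{W^{1,2}}^2$. Gronwall's inequality then gives the stated exponential.

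For \eqref{S_rho0 diff est}, the difference $\sigma=\mathcal{S}[\vvec]-\mathcal{S}[\mathbf{w}]$ solves the same linear equation with source $-\div(\mathcal{S}[\mathbf{w}](\vvec-\mathbf{w}))$ and zero initial data. Testing with $\sigma$ and integrating the source by parts against $\nablax\sigma$ gives
\[
\tfrac{d}{dt}\|\sigma\|_{L^2}^2\le C(\artvisc,K)\|\sigma\|_{L^2}^2+C\|\mathcal{S}[\mathbf{w}]\|_{L^2}^2\|\vvec-\mathbf{w}\|_{L^\infty}^2 .
\]
Gronwall's inequality combined with the $W^{1,2}$ bound then yields a factor $\sqrt\tau$ only. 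To obtain the linear factor $\tau$, I will keep the source in the non-divergence form
\[
-\nablax\mathcal{S}[\mathbf{w}]\cdot(\vvec-\mathbf{w})-\mathcal{S}[\mathbf{w}]\,\div(\vvec-\mathbf{w}),
\]
which is bounded in $L^2$ by $C(\artvisc,K)\|\rho_0\|_{W^{1,2}}\|\vvec-\mathbf{w}\|_{L^\infty W^{1,\infty}}$. Estimating $\frac{d}{dt}\|\sigma\|_{L^2}\le C\|\sigma\|_{L^2}+\|\text{source}\|_{L^2}$ and integrating over $(0,\tau)$ then gives the factor $\tau$.

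For part (ii), $\mathcal{T}_{c_0}$ is the solution operator of the linear heat equation $\paracoup\partial_tc-\kappa\Deltax c+\coup c=\coup r$ with Neumann data. Standard $L^2$ parabolic theory (e.g.\ \cite[Theorem~11.29]{FeireislNovotny2017singlim}, using $\partial\Omega\in C^{2,\nu}$) gives $c\in\mathcal{Q}_T$ for $r\in L^2$ and $c_0\in W^{1,2}$. Testing with $\partial_tc$ gives
\[
\paracoup\|\partial_tc\|_{L^2}^2+\tfrac{d}{dt}\Bigl(\tfrac{\kappa}{2}\|\nablax c\|_{L^2}^2+\tfrac{\coup}{2}\|c\|_{L^2}^2\Bigr)\le\tfrac{\paracoup}{2}\|\partial_t c\|_{L^2}^2+C\|r\|_{L^2}^2 .
\]
Hence $\|\nablax c(\tau)\|_{L^2}^2\le C(\|c_0\|_{W^{1,2}}^2+\int_0^\tau\|r\|_{L^2}^2)$. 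Inserting $r=\mathcal{S}[\vvec]$ with the upper bound \eqref{S_rho0 infty bds} and $|\Omega|<\infty$ gives the first estimate (on the bounded interval $[0,T]$, with the time integral absorbed into $C$). By linearity the difference satisfies the same estimate with $c_0=0$ and $r=\sigma$, so
\[
\|\nablax(c_v-c_w)(\tau)\|_{L^2}\le C\Bigl(\int_0^\tau\|\sigma\|_{L^2}^2\Bigr)^{1/2}\le C\Bigl(\int_0^\tau t^2\,dt\Bigr)^{1/2}\cdots\sim\tau^{3/2},
\]
using \eqref{S_rho0 diff est} at each $t\le\tau$.

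The main obstacle is the precise form of the difference estimate \eqref{S_rho0 diff est} with the linear factor $\tau$. It requires controlling $\nablax\mathcal{S}[\mathbf{w}]$ in $L^2$, which is where the $W^{1,2}$ estimate with its $\artvisc$-dependent constant enters, while the remaining steps are routine linear parabolic theory.
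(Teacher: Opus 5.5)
Your proposal is correct and follows the paper's proof. For (ii) you argue exactly as the paper does: the estimate $\|\nablax c(\tau)\|_{L^2(\Omega)}\le C(\|c_0\|_{W^{1,2}(\Omega)}+\|r\|_{L^2(0,\tau;L^2(\Omega))})$, which you derive by testing with $\partial_t c$ rather than citing \cite[Theorem~11.29]{FeireislNovotny2017singlim}, is combined with \eqref{S_rho0 infty bds} and, by linearity, with \eqref{S_rho0 diff est} to get $\tau^{3/2}$. For (i) the paper simply invokes \cite[Proposition~7.39]{NovotnyStraskraba2004}, and you supply the standard comparison-principle and energy arguments behind it, rightly noting that the non-divergence form of the source gives the linear factor $\tau$.

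One caveat applies equally to the lemma as stated. Maximal $L^p$-regularity with $L^2$ in time requires $\rho_0$ to lie in the trace space $(L^p,W^{2,p})_{1/2,2}=B^{1}_{p,2}(\Omega)$, which is strictly smaller than $W^{1,p}(\Omega)$ for $p>2$. So membership in $\mathcal{R}_T$ is not literally implied by $\rho_0\in W^{1,\infty^-}$ alone. This is harmless here, because the data actually used are smooth (and the restart data in the continuation argument are traces of $\mathcal{R}$-functions).
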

    \begin{proof}
        For a proof of the existence of the operator $\mathcal{S}_{\rho_0}$ satisfying all the relations in (i) we refer to \cite[Proposition~7.39]{NovotnyStraskraba2004}.
        The existence of the solution operator $\mathcal{T}_{c_0}$ satisfying the relations in \eqref{T_c0 operator equations} follows from standard theory on linear parabolic partial differential equations (see e.g.~\cite[Theorem~11.29]{FeireislNovotny2017singlim}).
        We fix $\vvec,\mathbf{w}\in L^\infty(0,T;W^{1,\infty}(\Omega)\cap W^{1,2}_0(\Omega))$ and set $r:=\mathcal{S}_{\rho_0}[\vvec]$, $s := \mathcal{S}_{\rho_0}[\mathbf{w}]$, $d:=r-s$, $f:=\mathcal{T}_{c_0}\circ\mathcal{S}_{\rho_0}[\vvec]$, $g:=\mathcal{T}_{c_0}\circ\mathcal{S}_{\rho_0}[\mathbf{w}]$, $h:=f-g$.
        Then, $f$ satisfies 
        \begin{equation*}
            \begin{aligned}
                &\paracoup\partial_t f - \kappa \Deltax f + \coup f = \coup r \quad \text{a.e.~in } \OmegaT,
                \\
                &\nablax f(\tau)\cdot \mathbf{n}_{\partial \Omega} = 0 \quad \text{in the sense of traces for a.e.~} \tau \in (0,T),
                \\
                &f(0) = c_0 \quad \text{a.e.~in } \Omega.
            \end{aligned}
        \end{equation*}
        From parabolic regularity estimates (see e.g.~\cite[Theorem~11.29]{FeireislNovotny2017singlim}) we infer that
        \begin{equation*}
            \begin{aligned}
                \|\nablax f(\tau)\|_{L^2(\Omega)}
                &\leq 
                C\bigg( \|f(0)\|_{W^{1,2}(\Omega)} + \|r\|_{L^2(0,\tau;L^2(\Omega))}\bigg)
                \\
                &\leq 
                C\bigg( \|c_0\|_{W^{1,2}(\Omega)} + \overline{\rho}\exp \Big( \int_0^\tau \|\vvec\|_{W^{1,\infty}(\Omega)}\, \dd t \Big) \bigg),
            \end{aligned}
        \end{equation*}
        where we have used $\eqref{S_rho0 infty bds}$ and the fact that $f(0)=c_0$ to obtain the last inequality.
        Using the fact that $h(0)=0$ as well as $\eqref{S_rho0 diff est}$, we obtain by the same argument that
        \begin{equation*}
            \|\nablax h(\tau)\|_{L^2(\Omega)} 
            \leq 
            C \|d\|_{L^2(0,\tau;L^2(\Omega))}
            \leq C(K,\eps) \tau^{\frac{3}{2}}\|\rho_0\|_{W^{1,2}(\Omega)}\|\vvec-\mathbf{w}\|_{L^\infty(0,\tau;W^{1,\infty}(\Omega))},
        \end{equation*}
        provided that $\eqref{bound by K}$ holds.
    \end{proof}
    In order to formulate the fixed point problem, we introduce for $f \in C([0,T];L^1(\Omega))$ a family of linear operators via
    \begin{equation}\label{curlyM operator}
        \mathcal{M}_{f(\tau)}\colon X_n \to X_n^\ast \simeq X_n,
        \quad 
        \left\langle\mathcal{M}_{f(\tau)}\vvec,\mathbf{w} \right\rangle:= \int_\Omega f(\tau) \vvec\cdot \mathbf{w}\, \dd x
    \end{equation}
    for any $\tau \in [0,T]$ and any $n \in \NN$.
    This family of operators satisfies
    \begin{equation}\label{curlyM est}
        \left\|\mathcal{M}_{f(\tau)}\right\|_{\mathcal{L}(X_n,X_n)} \leq C(n) \|f(\tau)\|_{L^1(\Omega)} \quad \forall \, \tau \in [0,T].
    \end{equation}
    If we assume in addition that $f\geq a$ a.e.~in $\OmegaT$ for some constant $a>0$, then $\mathcal{M}_{f(\tau)}$ is invertible with
    \begin{equation}\label{curlyM inv est}
        \left\|\mathcal{M}_{f(\tau)}^{-1}\right\|_{\mathcal{L}(X_n,X_n)} 
        \leq 
        \frac{1}{a} \quad \forall\, \tau \in [0,T].
    \end{equation}
    For a second function $g \in C([0,T];L^1(\Omega))$ with $g\geq a$ a.e.~in $\OmegaT$ we have the estimates
    \begin{align}
        &\left\| \mathcal{M}^{-1}_{g(\tau)} \mathcal{M}_{f(\tau)} \mathcal{M}_{g(\tau)}^{-1} \right\|_{\mathcal{L}(X_n,X_n)} 
        \leq 
        \frac{C(n)}{a^2}\|f(\tau)\|_{L^1(\Omega)}, \notag 
        \\
        &\left\|\mathcal{M}_{f(\tau)} - \mathcal{M}_{g(\tau)}\right\|_{\mathcal{L}(X_n,X_n)}
        \leq 
        C(n) \left\| f(\tau) - g(\tau) \right\|_{L^1(\Omega)}, \notag 
        \\
        &\left\|\mathcal{M}_{f(\tau)}^{-1} - \mathcal{M}_{g(\tau)}^{-1} \right\|_{\mathcal{L}(X_n,X_n)}
        \leq 
        \frac{C(n)}{a^2}\|f(\tau)-g(\tau)\|_{L^1(\Omega)},\label{curlyM inv diff est}
    \end{align}
    which hold true for any $\tau \in [0,T]$.
    If we assume in addition that $\partial_t f \in L^1(\OmegaT)$, then we have for any $\vvec,\mathbf{w}\in X_n$ that
    \begin{equation}\label{curlyM inv timederiv}
    \partial_t
        \big\langle
            \mathcal{M}_{f}^{-1} \vvec, \mathbf{w}
        \big\rangle
        =
        \big\langle 
            \mathcal{M}_{f}^{-1} \mathcal{M}_{\partial_t f} \mathcal{M}_f^{-1} \vvec,\mathbf{w}
        \big\rangle
        \quad \text{in } \mathcal{D}^\prime((0,T)).
    \end{equation}  
    For a proof of these properties we refer to \cite[Section~7.7]{NovotnyStraskraba2004}.
    In order to prove \Cref{prop:Gal approx}, we wish to find for $n\in\NN$ some $\uvec_n\in C([0,T];X_n)$ that satisfies for any $\tau \in [0,T]$ the relation
    \begin{equation}\label{mom prblm integrals}
        \begin{aligned}
            \int_\Omega 
            \left(
                \rho_n(\tau)\uvec_n(\tau) - \proj(\rho_0\uvec_{0n})
            \right)
            \cdot \psivec
            \, \dx
            =
            \int_0^\tau \int_\Omega 
            \mathcal{N}(\rho_n,\uvec_n,c_n) \cdot \psivec
            \, \dx \, \dt
            \quad \forall\, \psivec\in X_n,
        \end{aligned}
    \end{equation}
    with $\rho_n$ and $c_n$ satisfying \eqref{Gal approx cont+parab}--\eqref{Gal approx cont+parab IC}, and where
    \begin{equation}\label{fixed pt nonlin op}
        \mathcal{N}(\rho_n,\uvec_n,c_n)
        :=
        \div\Svisc(\nablax\uvec_n) - \div(\rho_n\uvec_n\otimes\uvec_n) - \nablax p_{\coup,\artprs}(\rho_n)  - \artvisc\nablax\rho_n\cdot\nablax\uvec_n + \coup \rho_n \nablax c_n.
    \end{equation}
    With the operators in \Cref{lem:sol operators} and the family of operators introduced in \eqref{curlyM operator} at hand, we can rewrite the problem \eqref{mom prblm integrals} equivalently as a fixed point problem via
    \begin{equation}\label{fixed pt prblm}
        \uvec_n(\tau) 
        =
        \mathcal{M}^{-1}_{\mathcal{S}_{\rho_0}[\uvec_n](\tau)}
        \left(
        \proj(\rho_0\uvec_{0n})
        +
        \int_0^\tau
        \proj
        \nonlin \bigl( \mathcal{S}_{\rho_0}[\uvec_n], \uvec_n, \mathcal{T}_{c_0}\circ \mathcal{S}_{\rho_0}[\uvec_n] \bigr)
        \, \dt
        \right)
        \quad \forall\, \tau \in [0,T].
    \end{equation}
    In the following auxiliary lemma we collect useful estimates that help us verifying that the right-hand side of \eqref{fixed pt prblm} defines a contraction locally-in-time.
    \begin{lemma}\label{lem:prep fixed pt}
        Let the hypotheses and notations of \Cref{prop:Gal approx} hold true and let $\tau \in [0,T]$.
        Assume
        \begin{equation}\label{fixed pt prbl bound IC}
            \|\rho_0\|_{W^{1,2}(\Omega)}
            +
            \|c_0\|_{W^{1,2}(\Omega)}
            +
            \|\proj(\rho_0\uvec_{0n})\|_{X_n}
            \leq 
            M,
        \end{equation}
        for some $M>0$.
        Then the map
        \begin{equation*}
            \mathbf{F}_\tau \colon C([0,\tau],X_n) \to C([0,\tau],X_n),
        \end{equation*}
        \begin{equation*}
        \vvec\mapsto
            \mathbf{F}_\tau(\vvec)(s) 
            :=
            \mathcal{M}_{S_{\rho_0}[\vvec](s)}^{-1}
            \left(
                \proj (\rho_0 \uvec_{0n})
                +
                \int_0^s 
                \proj \nonlin\bigl(\mathcal{S}_{\rho_0}[\vvec],\vvec,\mathcal{T}_{c_0}\circ \mathcal{S}_{\rho_0}[\vvec] \bigr) \, \dt
            \right)
            \quad \forall\, s \in [0,\tau],
        \end{equation*}
        with $\nonlin(\cdot,\cdot,\cdot)$ defined in \eqref{fixed pt nonlin op}, is well-defined.
        Moreover, we have for any $K\in(0,\infty)$ and any
        \begin{equation*}
            \vvec\in B_{\tau,K}
            :=
            \left\{\vvec \in C([0,\tau];X_n) \mid \|\vvec\|_{L^\infty(0,\tau;W^{1,\infty}(\Omega))} \leq K\right\}
        \end{equation*}
        that
        \begin{equation}\label{self-mapping est}
            \|\mathbf{F}(\vvec)\|_{C([0,\tau],X_n)}
            \leq 
            \underline{\rho}^{-1} \exp(K\tau) \left\| \proj (\rho\uvec)_0 \right\|_{X_n}
            +
            C(n,K,M,\overline{\rho},\underline{\rho})\tau,
        \end{equation}
        and for any $\vvec_1,\vvec_2 \in B_{\tau,K}$ that 
        \begin{equation}\label{contraction est}
        \begin{aligned}
            \left\|\mathbf{F}_\tau(\vvec_1) - \mathbf{F}_\tau(\vvec_2)\right\|_{C([0,\tau],X_n)}
            \leq C(n,K,M,\overline{\rho},\underline{\rho},\artvisc)
            \tau 
            \|\vvec_1-\vvec_2\|_{C([0,\tau],X_n)}.
        \end{aligned}
        \end{equation}
    \end{lemma}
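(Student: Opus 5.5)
The plan is the standard Faedo--Galerkin fixed-point argument of \cite[Section~7.7]{NovotnyStraskraba2004}, using the finite dimensionality of $X_n$ via \eqref{equiv norms Xn}. Throughout, $C(n)$ absorbs all norm equivalences on $X_n$.

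\emph{Well-definedness.} For $\vvec\in C([0,\tau];X_n)$ we have $\vvec\in L^\infty(0,\tau;W^{1,\infty}(\Omega)\cap W^{1,2}_0(\Omega))$. Hence \Cref{lem:sol operators} gives $r:=\mathcal{S}_{\rho_0}[\vvec]\in\mathcal{R}_\tau$ with the two-sided bound \eqref{S_rho0 infty bds}. In particular $r\ge \underline{\rho}e^{-K\tau}>0$ on $B_{\tau,K}$; for a general $\vvec$ we use its own $W^{1,\infty}$-bound. It also gives $c:=\mathcal{T}_{c_0}[r]\in\mathcal{Q}_\tau$. Since $r\in C([0,\tau];W^{1,\infty^-})$, the map $s\mapsto r(s)$ is continuous into $L^1(\Omega)$, so $\mathcal{M}_{r(s)}^{-1}$ exists by \eqref{curlyM inv est} and depends continuously on $s$ by \eqref{curlyM inv diff est}.

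Each term of $\nonlin$, tested against $\psivec\in X_n$, is integrable in time:
\begin{itemize}
\item after integrating by parts onto $\psivec$, the viscous and convective terms are controlled by $C(n)(\|\vvec\|+\|r\|_{L^1}\|\vvec\|^2)$;
\item the pressure term gives $\int p_{\coup,\artprs}(r)\div\psivec$, bounded since $r$ is bounded above;
\item $\artvisc\int\nablax r\cdot\nablax\vvec\cdot\psivec\le C(n)\|\nablax r\|_{L^2}\|\vvec\|$;
\item $\coup\int r\nablax c\cdot\psivec\le C(n)\overline{\rho}e^{K\tau}\|\nablax c\|_{L^2}$, and $\|\nablax c\|_{L^2}$ is bounded by the second estimate in \Cref{lem:sol operators}(ii).
\end{itemize}
So the time integral is continuous in $s$, and $\mathbf{F}_\tau(\vvec)\in C([0,\tau];X_n)$.

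\emph{Self-mapping bound \eqref{self-mapping est}.} Apply $\|\mathcal{M}^{-1}_{r(s)}\|\le \underline{\rho}^{-1}e^{K\tau}$, using the lower bound on $r$. The first term then gives the leading part. The integral term is at most $s\sup_t\|\proj\nonlin\|_{X_n}$, and each contribution above is bounded by a constant $C(n,K,M,\overline{\rho},\underline{\rho})$. To get this, we use the $L^\infty$ bound $\overline{\rho}e^{K\tau}$ for $r$, the $W^{1,2}$ bound from \Cref{lem:sol operators}(i) for $\nablax r$, and the gradient bound for $c$, all controlled by $M$ through \eqref{fixed pt prbl bound IC}. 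Here $\|\vvec\|_{X_n}\le C(n)K$.

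\emph{Contraction estimate \eqref{contraction est}.} This is the main step. Write $r_i,c_i$ for the images of $\vvec_i$ and split
\[
\mathbf{F}_\tau(\vvec_1)-\mathbf{F}_\tau(\vvec_2)=(\mathcal{M}^{-1}_{r_1}-\mathcal{M}^{-1}_{r_2})(\dots)_1+\mathcal{M}^{-1}_{r_2}\big((\dots)_1-(\dots)_2\big).
\]
For the first term, \eqref{curlyM inv diff est} and \eqref{S_rho0 diff est} give
\[
\|r_1(s)-r_2(s)\|_{L^1}\le C\|r_1(s)-r_2(s)\|_{L^2}\le C(\artvisc,K)\tau M\|\vvec_1-\vvec_2\|_{L^\infty W^{1,\infty}}\le C(n)\tau\|\vvec_1-\vvec_2\|.
\]
This is multiplied by the bounded bracket from the previous step.

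For the second term, we estimate the difference of the nonlinearities in $\nonlin$ term by term:
\begin{itemize}
\item the bilinear and trilinear terms are handled by adding and subtracting, together with the bounds on $r_i$;
\item the pressure difference uses $p_{\coup,\artprs}\in C^1$ on the compact range $[\underline{\rho}e^{-K\tau},\overline{\rho}e^{K\tau}]$, so it is Lipschitz there and $\|p(r_1)-p(r_2)\|_{L^1}\le C\|r_1-r_2\|_{L^2}$;
\item the coupling term splits as $(r_1-r_2)\nablax c_1+r_2\nablax(c_1-c_2)$, estimated with \eqref{S_rho0 diff est} and the difference bound in \Cref{lem:sol operators}(ii);
\item the delicate term is $\artvisc\nablax(r_1-r_2)\cdot\nablax\vvec$.
\end{itemize}

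The term $\artvisc\nablax(r_1-r_2)\cdot\nablax\vvec$ is the main obstacle, because \eqref{S_rho0 diff est} only controls $r_1-r_2$ in $L^2$. I will avoid needing $\nablax(r_1-r_2)$ by integrating by parts against $\psivec$:
\[
\int\nablax(r_1-r_2)\cdot\nablax\vvec\cdot\psivec=-\int(r_1-r_2)\,\div(\nablax\vvec\,\psivec),
\]
which is legitimate since $\psivec$ vanishes on $\partial\Omega$. The resulting right-hand side is bounded by $C(n)K\|r_1-r_2\|_{L^2}$, since $X_n\subset W^{2,\infty^-}$.

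All pieces are then $\le C\|\vvec_1-\vvec_2\|$ pointwise in time. Integrating over $(0,s)$ with $s\le\tau$ gives the factor $\tau$, which yields \eqref{contraction est}.
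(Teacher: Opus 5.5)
Your proposal follows the paper's proof essentially step for step. It uses the same splitting of $\mathbf{F}_\tau(\vvec_1)-\mathbf{F}_\tau(\vvec_2)$ into an $\mathcal{M}^{-1}$-difference part and a nonlinearity-difference part, the same Lipschitz bound for $p_{\coup,\artprs}$ on the compact range of the densities, and the same integration by parts $\int_\Omega \artvisc\nablax(r_1-r_2)\cdot\nablax\vvec\cdot\psivec\,\dd x=-\int_\Omega\artvisc(r_1-r_2)(\Deltax\vvec\cdot\psivec+\nablax\vvec:\nablax\psivec)\,\dd x$ to avoid gradients of $r_1-r_2$.

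There is one small deviation. In the well-definedness and self-mapping steps you control the $\artvisc$-term through the $W^{1,2}$ bound of \Cref{lem:sol operators}(i), whose constant $\exp(\frac{C}{2\artvisc}(K+K^2)\tau)$ depends on $\artvisc$. Your constant in \eqref{self-mapping est} is therefore not $\artvisc$-independent as the statement asserts. Integrating by parts there as well, as the paper does using $\artvisc<1$ and $\|r\|_{L^\infty}\le\overline{\rho}e^{K\tau}$, removes this dependence. In any case it is harmless for the subsequent fixed-point argument, since $\Tloc$ is allowed to depend on $\artvisc$.
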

    \begin{proof}
        We fix $\tau \in [0,T]$ and $\vvec\in B_{\tau,K}$.
        Using \eqref{equiv norms Xn} and \eqref{prpties proj}, we estimate
        \begin{equation}\label{est proj nonlin (t) I}
            \begin{aligned}
                &\left\|
                \proj\mathcal{N}\bigl(\mathcal{S}_{\rho_0}[\vvec],\vvec,\mathcal{T}_{c_0}\circ\mathcal{S}_{\rho_0}[\vvec]\bigr)(s)
                \right\|_{X_n}
                \\
                &\leq
                C(n)
                \Bigl\{
                    \left\|\vvec(s)\right\|_{X_n}
                    +
                    \left\|p_{\coup,\artprs}\bigl(\mathcal{S}_{\rho_0}[\vvec](s)\bigr)\right\|_{L^1(\Omega)}
                    \\
                    &\qquad\qquad+
                    \|\mathcal{S}_{\rho_0}[\vvec](s)\|_{L^\infty(\Omega)}
                    \Bigl(
                    \|\vvec(s)\|_{X_n}^2+\|\vvec(s)\|_{X_n}
                    +
                    \|\nablax \bigl(\mathcal{T}_{c_0}\circ \mathcal{S}_{\rho_0}[\vvec]\bigr)(s)\|_{L^1(\Omega)}
                    \Bigr)
                \Bigr\}
            \end{aligned}
        \end{equation}
        for any $s \in [0,\tau]$.
        Then, by using \Cref{lem:sol operators} we conclude
        \begin{equation}\label{est proj nonlin (t) II}
            \left\|
                \proj\mathcal{N}\bigl(\mathcal{S}_{\rho_0}[\vvec],\vvec,\mathcal{T}_{c_0}\circ\mathcal{S}_{\rho_0}[\vvec]\bigr)(s)
            \right\|_{X_n}
            \leq C(n,K,M,\overline{\rho})\qquad \forall \, s \in [0,\tau].
        \end{equation}
        Next, by writing for any $s_1,s_2 \in [0,\tau]$
        \begin{equation*}
            \begin{aligned}
                &\mathcal{M}_{\mathcal{S}_{\rho_0}[\vvec](s_1)}^{-1}
                \int_0^{s_1} \proj \nonlin\bigl( \mathcal{S}_{\rho_0}[\vvec], \vvec, \mathcal{T}_{c_0}\circ \mathcal{S}_{\rho_0}[\vvec] \bigr) \, \dt
                -
                \mathcal{M}_{S_{\rho_0}[\vvec](s_2)}^{-1}
                \int_0^{s_2} \proj \nonlin\bigl( S_{\rho_0}[\vvec], \vvec, \mathcal{T}_{c_0}\circ \mathcal{S}_{\rho_0}[\vvec]\bigr) \, \dt
                \\
                &= 
                \mathcal{M}_{\mathcal{S}_{\rho_0}(s_1)}^{-1} 
                \int_{s_2}^{s_1} \proj \nonlin \bigl(\mathcal{S}_{\rho_0}[\vvec], \vvec, \mathcal{T}_{c_0}\circ \mathcal{S}_{\rho_0}[\vvec] \bigr)\, \dt
                \\
                &\quad +
                \left( \mathcal{M}_{\mathcal{S}_{\rho_0}[\vvec](s_1)}^{-1} - \mathcal{M}_{\mathcal{S}_{\rho_0}[\vvec](s_2)}^{-1} \right)
                \int_0^{s_2} \proj \nonlin\bigl(\mathcal{S}_{\rho_0}[\vvec], \vvec, \mathcal{T}_{c_0}\circ \mathcal{S}_{\rho_0}[\vvec]\bigr) \, \dt,
            \end{aligned}
        \end{equation*}
        we conclude from \eqref{curlyM inv est}, \eqref{curlyM inv diff est}, \eqref{est proj nonlin (t) I}, and \Cref{lem:sol operators} that
        \begin{equation*}
            \mathcal{M}_{\mathcal{S}_{\rho_0}[\vvec](\cdot)}^{-1} \int_0^{\cdot} \proj\nonlin\bigl( \mathcal{S}_{\rho_0}[\vvec], \vvec, \mathcal{T}_{c_0} \circ \mathcal{S}_{\rho_0} [\vvec]\bigr) \, \dt \in C([0,\tau];X_n).
        \end{equation*}
        With \eqref{S_rho0 infty bds} and \eqref{curlyM inv diff est} we infer that
        \begin{equation*}
            \mathcal{M}_{\mathcal{S}_{\rho_0}[\vvec](\cdot)}^{-1} \proj (\rho_0\uvec_{0n}) \in C([0,\tau];X_n).
        \end{equation*}
        Thus we have $\mathbf{F}_\tau(\vvec) \in C([0,\tau],X_n)$ and $\mathbf{F}_\tau$ is well-defined.
        To prove \eqref{self-mapping est}, we use \Cref{lem:sol operators} as well as \eqref{curlyM inv est} and \eqref{est proj nonlin (t) II} to estimate 
        \begin{equation}\label{self-mapping est I}
        \begin{aligned}
            &\left\|
            \mathcal{M}_{\mathcal{S}_{\rho_0}[\vvec](\cdot)}^{-1}
            \int_0^{\cdot} \proj \nonlin \bigl( \mathcal{S}_{\rho_0}[\vvec], \vvec, \mathcal{T}_{c_0}\circ \mathcal{S}_{\rho_0}[\vvec]\bigr) \, \dd t
            \right\|_{C([0,\tau];X_n)}
            \\
            &\leq 
            C(n) \underline{\rho}^{-1} \exp(KT) \tau \left\| \proj \nonlin\bigl(\mathcal{S}_{\rho_0}[\vvec],\vvec, \mathcal{T}_{c_0}\circ \mathcal{S}_{\rho_0}[\vvec]\bigr)\right\|_{C([0,\tau];X_n)} 
            \\
            &\leq C(n,K,M,\overline{\rho},\underline{\rho})\tau,
        \end{aligned}
        \end{equation}
        and further,
        \begin{equation}\label{self-mapping est II}
            \left\|
            \mathcal{M}_{\mathcal{S}_{\rho_0}[\vvec](\cdot)}^{-1} \proj(\rho_0\uvec_{0n})
            \right\|_{C([0,\tau];X_n)}
            \leq 
            \underline{\rho}^{-1}\exp(K\tau) \|\proj(\rho_0\uvec_{0n})\|_{X_n}.
        \end{equation}
        Combining \eqref{self-mapping est I} and \eqref{self-mapping est II} leads to \eqref{self-mapping est}.
        To prove \eqref{contraction est}, we fix $\vvec_1,\vvec_2 \in B_{\tau,K}$.
        For any $\psivec\in X_n$, we have the relation
        \begin{equation*}
            \begin{aligned}
                &\left\langle 
                    \nonlin\bigl( \mathcal{S}_{\rho_0}[\vvec_1],\vvec_1,\mathcal{T}_{c_0}\circ\mathcal{S}_{\rho_0}[\vvec_1]\bigr)
                    -
                    \nonlin\bigl( \mathcal{S}_{\rho_0}[\vvec_2],\vvec_2,\mathcal{T}_{c_0}\circ\mathcal{S}_{\rho_0}[\vvec_2] \bigr)
                    ,
                    \psivec
                \right\rangle
                \\
                &=
                \int_\Omega \div\Svisc(\nablax\vvec_1-\nablax\vvec_2)\cdot \psivec \, \dd x
                +
                \int_\Omega \div\psivec\int_{\mathcal{S}_{\rho_0}[\vvec_2]}^{\mathcal{S}_{\rho_0}[\vvec_1]}p_{\coup,\artprs}^\prime(\xi)\, \dd \xi \, \dd x
                \\&\quad
                +
                \int_\Omega \left( \mathcal{S}_{\rho_0}[\vvec_1]-\mathcal{S}_{\rho_0}[\vvec_2] \right) \vvec_1\otimes \vvec_1 : \nablax \psivec \, \dd x
                +
                \int_\Omega \mathcal{S}_{\rho_0}[\vvec_2]\left(\vvec_1-\vvec_2\right)\otimes\vvec_1 : \nablax \psivec\, \dd x
                \\&\quad
                +
                \int_\Omega \mathcal{S}_{\rho_0}[\vvec_2] \vvec_2 \otimes \left(\vvec_1-\vvec_2\right) : \nablax \psivec \, \dd x
                +
                \int_\Omega\artvisc \left( \mathcal{S}_{\rho_0}[\vvec_1] - \mathcal{S}_{\rho_0}[\vvec_2] \right) \left( \Deltax \vvec_1 \cdot \psivec + \nablax \vvec_1 : \nablax \psivec \right) \, \dd x
                \\&\quad+
                \int_\Omega 
                \artvisc\mathcal{S}_{\rho_0}[\vvec_2]\left( \Deltax(\vvec_1-\vvec_2)\cdot\psivec + \nablax(\vvec_1-\vvec_2):\nablax\psivec \right)\, \dd x
                \\&\quad
                +
                \int_\Omega \coup \left( \mathcal{S}_{\rho_0}[\vvec_1] - \mathcal{S}_{\rho_0}[\vvec_2] \right) \nablax\left(\mathcal{T}_{c_0}\circ\mathcal{S}_{\rho_0}[\vvec_1]\right) \cdot \psivec\, \dd x
                \\&\quad
                +
                \int_\Omega \coup \mathcal{S}_{\rho_0}[\vvec_2]\left(\nablax\left(\mathcal{T}_{c_0}\circ\mathcal{S}_{\rho_0}[\vvec_1]\right) - \nablax\left(\mathcal{T}_{c_0}\circ\mathcal{S}_{\rho_0}[\vvec_2]\right)\right)\cdot \psivec
                \, \dd x,
            \end{aligned}
        \end{equation*}
        which holds on $[0,\tau]$.
        In view of \eqref{equiv norms Xn} and \Cref{lem:sol operators}, we deduce from this relation for any $s \in [0,\tau]$ that
        \begin{equation}\label{contraction est I}
            \begin{aligned}
                &\left\|
                \proj\nonlin\bigl( \mathcal{S}_{\rho_0}[\vvec_1],\vvec_1,\mathcal{T}_{c_0}\circ\mathcal{S}_{\rho_0}[\vvec_1]\bigr)(s)
                -
                \proj\nonlin\bigl( \mathcal{S}_{\rho_0}[\vvec_2],\vvec_2,\mathcal{T}_{c_0}\circ\mathcal{S}_{\rho_0}[\vvec_2] \bigr)(s)
                \right\|_{X_n}
                \\
                &\leq
                C(n,K,M,\overline{\rho},\artvisc)\|\vvec_1-\vvec_2\|_{C([0,\tau],X_n)}.
            \end{aligned}
        \end{equation}
        With \eqref{curlyM inv diff est} and \Cref{lem:sol operators} we estimate further for any $s \in [0,\tau]$
        \begin{equation}\label{contraction est II}
        \begin{aligned}
            &\left\|
            \mathcal{M}_{\mathcal{S}_{\rho_0}[\vvec_1](s)}^{-1}
            -
            \mathcal{M}_{\mathcal{S}_{\rho_0}[\vvec_2](s)}^{-1} 
            \right\|_{\mathcal{L}(X_n,X_n)}
            \\
            &\leq 
            C(n,K,\artvisc)\underline{\rho}^{-2} s \|\rho_0\|_{W^{1,2}(\Omega)} \|\vvec_1 - \vvec_2\|_{C([0,\tau],X_n)}
            \\
            &\leq C(n,K,M,\underline{\rho},\artvisc) \tau \|\vvec_1-\vvec_2\|_{C([0,\tau];X_n)}.
        \end{aligned}
        \end{equation}
        Now, we write for any $s \in [0,\tau]$
        \begin{equation}\label{contraction est III}
            \begin{aligned}
                &\mathbf{F}_\tau(\vvec_1)(s) - \mathbf{F}_{\tau}(\vvec_2)(s)
                \\
                &=
                \left( \mathcal{M}_{\mathcal{S}_{\rho_0}[\vvec_1](s)}^{-1} - \mathcal{M}_{\mathcal{S}_{\rho_0}[\vvec_2](s)}^{-1} \right)
                \left( \proj (\rho\uvec)_0 + \int_0^s \proj\nonlin\bigl(\mathcal{S}_{\rho_0}[\vvec_1],\vvec_1,\mathcal{T}_{c_0}\circ\mathcal{S}_{\rho_0}[\vvec_1]\bigr)\, \dt\right)
                \\
                &\quad +
                \mathcal{M}_{\mathcal{S}_{\rho_0}[\vvec_2](s)}^{-1}\int_0^s \Bigl( \proj\nonlin \bigl(\mathcal{S}_{\rho_0}[\vvec_1],\vvec_1,\mathcal{T}_{c_0}\circ\mathcal{S}_{\rho_0}[\vvec_1]\bigr) - \proj \nonlin \bigl(\mathcal{S}_{\rho_0}[\vvec_2],\vvec_2,\mathcal{T}_{c_0}\circ\mathcal{S}_{\rho_0}[\vvec_2]\bigr) \Bigr) \, \dt.
            \end{aligned}
        \end{equation}
        Combining \eqref{fixed pt prbl bound IC}, \eqref{est proj nonlin (t) II}, and \eqref{contraction est I}--\eqref{contraction est III} leads to
        \begin{equation*}
        \begin{aligned}
            \left\|\mathbf{F}_\tau(\vvec_1) - \mathbf{F}_\tau(\vvec_2)\right\|_{C([0,\tau],X_n)}
            \leq 
            C(n,K,M,\overline{\rho},\underline{\rho},\artvisc)\tau \|\vvec_1- \vvec_2\|_{C([0,\tau];X_n)}.
        \end{aligned}
        \end{equation*}
    \end{proof}
    Now we have everything prepared to prove the local-in-time existence and uniqueness of solutions to \eqref{Gal approx cont+parab}--\eqref{Gal approx Energy} via a fixed point argument.
    \begin{lemma}\label{lem:local-in-time exSol}
        Let the hypotheses and notations of \Cref{prop:Gal approx} hold true. 
        Then there exists a time $\Tloc>0$ only depending on $n,M,\underline{\rho},\overline{\rho},\varepsilon$, and on the quantities in $\eqref{data - fixed params}$ such that there is a unique triplet $(\rho_n,\uvec_n,c_n) \in \mathcal{R}_{\Tloc}\times C([0,\Tloc];X_n)\times \mathcal{Q}_{\Tloc}$ with $\partial_t \uvec_n \in L^2(0,\Tloc;X_n)$ satisfying \eqref{Gal approx cont+parab}--\eqref{Gal approx Energy} with $T$ replaced by $\Tloc$.
    \end{lemma}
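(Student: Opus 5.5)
We apply the Banach fixed point theorem to the map $\mathbf{F}_\tau$ from \Cref{lem:prep fixed pt} on the closed set $B_{\tau,K}\subseteq C([0,\tau];X_n)$. First we fix $M$ as in \eqref{fixed pt prbl bound IC}; this is possible since $\rho_0\in W^{1,\infty}(\Omega)$, $c_0\in W^{1,2}(\Omega)$, and $\|\proj(\rho_0\uvec_{0n})\|_{X_n}\le C\overline{\rho}\,\|(\rho\uvec)_0/\rho_0\|_{L^2}\le C(\overline{\rho},\underline{\rho})\|(\rho\uvec)_0\|_{L^2}$. By \eqref{equiv norms Xn} there is $C_1(n)$ with $\|\vvec\|_{W^{1,\infty}(\Omega)}\le C_1(n)\|\vvec\|_{X_n}$ on $X_n$. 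We then choose
\begin{equation*}
K:=2C_1(n)\bigl(\underline{\rho}^{-1}e\,M+1\bigr).
\end{equation*}
With this $K$ fixed, we choose $\Tloc\le 1/K$ so small that $C(n,K,M,\overline{\rho},\underline{\rho})\Tloc\le 1$ in \eqref{self-mapping est} and $C(n,K,M,\overline{\rho},\underline{\rho},\artvisc)\Tloc\le \tfrac12$ in \eqref{contraction est}. Then \eqref{self-mapping est} gives $\|\mathbf{F}_{\Tloc}(\vvec)\|_{C([0,\Tloc];X_n)}\le \underline{\rho}^{-1}eM+1$, and hence $\|\mathbf{F}_{\Tloc}(\vvec)\|_{L^\infty W^{1,\infty}}\le K$. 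So $\mathbf{F}_{\Tloc}$ maps $B_{\Tloc,K}$ into itself, and by \eqref{contraction est} it is a contraction there. Banach's theorem yields a unique fixed point $\uvec_n\in B_{\Tloc,K}$. By construction $\Tloc$ depends only on $n,M,\underline{\rho},\overline{\rho},\artvisc$ and the data \eqref{data - fixed params}.

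Next we set $\rho_n:=\mathcal{S}_{\rho_0}[\uvec_n]\in\mathcal{R}_{\Tloc}$ and $c_n:=\mathcal{T}_{c_0}[\rho_n]\in\mathcal{Q}_{\Tloc}$. By \Cref{lem:sol operators} these satisfy \eqref{Gal approx cont+parab}--\eqref{Gal approx cont+parab IC}, and $\rho_n\ge\underline{\rho}e^{-K\Tloc}>0$. The fixed point identity \eqref{fixed pt prblm} is equivalent to \eqref{mom prblm integrals}. To get the time regularity of $\uvec_n$, note that $s\mapsto\proj\nonlin(\rho_n,\uvec_n,c_n)(s)$ is bounded in $X_n$ by \eqref{est proj nonlin (t) II}. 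Together with \eqref{curlyM inv timederiv} and $\partial_t\rho_n\in L^2(0,\Tloc;L^{\infty^-})$, this gives $\partial_t\uvec_n\in L^2(0,\Tloc;X_n)$. Differentiating \eqref{mom prblm integrals} in time then gives \eqref{Gal approx mom}, and \eqref{Gal approx mom IC} follows from evaluating at $\tau=0$, where $\mathcal{M}_{\rho_0}\uvec_{0n}=\proj(\rho_0\uvec_{0n})$. Uniqueness in the full class (not just in $B_{\Tloc,K}$) follows by a continuity argument. Any other solution lies in $C([0,\Tloc];X_n)$ and is therefore bounded in $W^{1,\infty}$ by some $K'$. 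Redoing the contraction with $\max\{K,K'\}$ on a short interval shows the two solutions coincide there, and iterating covers $[0,\Tloc]$.

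The step needing the most care is the energy inequality \eqref{Gal approx Energy}. Since $\partial_t\uvec_n\in L^2(0,\Tloc;X_n)$, we may take $\psivec=\uvec_n(t)$ in \eqref{Gal approx mom}. We rewrite $\int(\partial_t(\rho_n\uvec_n)+\div(\rho_n\uvec_n\otimes\uvec_n))\cdot\uvec_n$ as $\frac{\dd}{\dd t}\int\frac12\rho_n|\uvec_n|^2$ plus $\frac12\int(\partial_t\rho_n+\div(\rho_n\uvec_n))|\uvec_n|^2$. The latter equals $\frac{\artvisc}{2}\int\Deltax\rho_n|\uvec_n|^2$, which cancels exactly against $\artvisc\int\nablax\rho_n\cdot\nablax\uvec_n\cdot\uvec_n$ after integration by parts, using $\uvec_n=0$ on $\partial\Omega$.

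For the pressure we split $p_{\coup,\artprs}=h+q+\frac{\coup}{2}r^2+\artprs r^\artgrwth$. We test the continuity equation with $H'(\rho_n)$ and $\frac{\artprs\artgrwth}{\artgrwth-1}\rho_n^{\artgrwth-1}$, using \eqref{relations potential-pressure}. The artificial viscosity produces $-\artvisc\int H''(\rho_n)|\nablax\rho_n|^2\le0$ by convexity of $H$, which is why we only get an inequality, and the term $-\artvisc\artprs\artgrwth\int\rho_n^{\artgrwth-2}|\nablax\rho_n|^2$. The $q$ part is kept as $\int q(\rho_n)\div\uvec_n$ on the right-hand side. The coupling terms $-\frac{\coup}{2}\int\nablax\rho_n^2\cdot\uvec_n+\coup\int\rho_n\nablax c_n\cdot\uvec_n$ are treated together. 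We multiply the $c_n$-equation by $\partial_tc_n$ and use $\int\rho_n\uvec_n\cdot\nablax(c_n-\rho_n)=-\int\div(\rho_n\uvec_n)(c_n-\rho_n)$ with the continuity equation. This produces $\frac{\dd}{\dd t}\int\frac{\coup}{2}|\rho_n-c_n|^2+\frac\kappa2|\nablax c_n|^2$, the dissipation $\paracoup|\partial_tc_n|^2$, and the viscous remainder $\artvisc\coup\int\nablax\rho_n\cdot\nablax(\rho_n-c_n)$, which is exactly the $\artvisc\coup|\nablax\rho_n|^2$ term minus the right-hand side term. All these manipulations are justified by the regularity $\rho_n\in\mathcal{R}_{\Tloc}$, $c_n\in\mathcal{Q}_{\Tloc}$, and $\rho_n\ge\underline{\rho}e^{-K\Tloc}>0$, which is needed for $H'(\rho_n)$ when $H$ is singular near $0$.
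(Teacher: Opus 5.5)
Your proposal is correct and follows the paper's proof step by step: Banach's fixed point theorem for $\mathbf{F}_{\tau}$ on $B_{\tau,K}$ via \Cref{lem:prep fixed pt}, recovery of $\partial_t\uvec_n\in L^2(0,\Tloc;X_n)$ from \eqref{curlyM inv timederiv}, and the energy inequality obtained as in \eqref{Gal energy id I}--\eqref{Gal energy id VI}, i.e.\ by testing with $\uvec_n$, renormalizing the continuity equation with $H'$ and $\rho_n^{\artgrwth-1}$, and treating $\coup\rho_n\nablax(c_n-\rho_n)$ through the $c_n$-equation. Your two deviations are bookkeeping improvements rather than a different route. First, your $K$ includes the norm-equivalence constant between $X_n$ and $W^{1,\infty}(\Omega)$, whereas the paper's $K=8\underline{\rho}^{-1}M$ tacitly compares the $C([0,\tau];X_n)$-bound \eqref{self-mapping est} with the $W^{1,\infty}$-ball $B_{\tau,K}$. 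Second, you prove uniqueness in the full class by restarting the contraction estimate, using that $\mathcal{S}_{\rho_0}$ and $\mathcal{T}_{c_0}$ are causal, instead of the energy/Gr\"onwall argument that the paper only cites.
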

    \begin{proof}
        We set
        \begin{equation*}
            K:=8\underline{\rho}^{-1}M.
        \end{equation*}
        In view of \Cref{lem:prep fixed pt}, there exists a constant $c_1>0$ only depending on $n,M,\overline{\rho},\underline{\rho}$, and on the quantities in $\eqref{data - fixed params}$ such that we have for any $\tau \in [0,T]$
        \begin{equation*}
            \|\mathbf{F}_{\tau} (\vvec)\|_{C([0,\tau];X_n)} 
            \leq 
            \underline{\rho}^{-1}\exp(K\tau) \|\proj(\rho_0\uvec_{0n})\|_{X_n}
            +
            c_1 \tau 
            \qquad \forall\, \vvec\in B_{\tau,K}.
        \end{equation*}
        In particular, by setting
        \begin{equation*}
            T_0:=\min\left\{ \frac{\ln 2}{K}, \frac{K}{2 c_1},T \right\},
        \end{equation*}
        we have 
        \begin{equation*}
            \mathbf{F}_\tau (B_{\tau,K})\subseteq B_{\tau,K} \qquad \forall \tau \in [0,T_0].
        \end{equation*}
        Note that $T_0$ only depends on $n,M,\overline{\rho},\underline{\rho}$, and on the quantities in $\eqref{data - fixed params}$.
        Again by \Cref{lem:prep fixed pt}, there exists some constant $c_2>0$ only depending on $n,M,\overline{\rho},\underline{\rho},\artvisc$, and on the quantities in $\eqref{data - fixed params}$ such that for any $\tau \in [0,T_0]$
        \begin{equation*}
            \|\mathbf{F}_\tau(\vvec_1)-\mathbf{F}_{\tau}(\vvec_2)\|_{C([0,\tau],X_n)}
            \leq 
            c_2 \tau \|\vvec_1-\vvec_2\|_{C([0,\tau];X_n)}
            \qquad \forall \, \vvec_1,\vvec_2 \in B_{\tau,K}.
        \end{equation*}
        Thus, by setting
        \begin{equation*}
            \Tloc:=\min\left\{ T_0, \frac{1}{2c_2} \right\},
        \end{equation*}
        we have that $\Tloc$ only depends on $n,M,\overline{\rho},\underline{\rho},\artvisc$, and on the quantities in $\eqref{data - fixed params}$, and moreover
        \begin{equation*}
            \|\mathbf{F}_{\Tloc}(\vvec_1) - \mathbf{F}_{\Tloc}(\vvec_2)\|_{C([0,\Tloc];X_n)}
            \leq 
            \frac{1}{2}\|\vvec_1-\vvec_2\|  \qquad\forall \vvec_1,\vvec_2 \in B_{\Tloc,K}.
        \end{equation*}
        In particular, $\mathbf{F}_{\Tloc}\colon B_{\Tloc,K}\to B_{\Tloc,K}$ is a well-defined contraction.
        With the help of Banach's fixed point theorem (see e.g.~\cite[Section~1.4.11.2]{NovotnyStraskraba2004}), we conclude that there exists precisely one $\uvec_n\in B_{\Tloc,K}$ that satisfies
        \begin{equation}\label{fixed pt eq sol}
            \mathbf{F}_{\Tloc}(\uvec_n) = \uvec_n \quad \text{in } [0,\Tloc].
        \end{equation}
        This implies that the triplet $\left(\rho_n,\uvec_n,c_n\right)$ with $\rho_n:=\mathcal{S}_{\rho_0}[\uvec_n]\in \mathcal{R}_{\Tloc}$ and $c_n:=\mathcal{T}_{c_0}\circ\mathcal{S}_{\rho_0}[\uvec_n]\in \mathcal{Q}_{\Tloc}$ satisfies \eqref{Gal approx cont+parab}--\eqref{Gal approx cont+parab IC}, \eqref{Gal approx mom IC}, and \eqref{mom prblm integrals} with $T$ replaced by $\Tloc$.
        With \eqref{curlyM inv timederiv} we conclude from \eqref{fixed pt eq sol} the relation
        \begin{equation*}
            \begin{aligned}
                \partial_t \uvec_n
                &=
                \mathcal{M}_{\rho_n}^{-1}\mathcal{M}_{\partial_t\rho_n}\mathcal{M}_{\rho_n}^{-1}\proj(\rho_0\uvec_{0n})
                +
                \mathcal{M}_{\rho_n}^{-1} \mathcal{M}_{\partial_t \rho_n}\mathcal{M}_{\rho_n}^{-1} 
                \left( \int_0^{\cdot} \proj \nonlin(\rho_n,\uvec_n,c_n)\, \dd s  \right)
                \\
                &\quad+
                \mathcal{M}_{\rho_n}^{-1}\proj \nonlin(\rho_n,\uvec_n,c_n)
                \qquad \text{in } \mathcal{D}^\prime((0,\Tloc);X_n).
            \end{aligned}
        \end{equation*}
        Using \eqref{curlyM est}, \eqref{curlyM inv est}, and the same estimates leading to \eqref{est proj nonlin (t) II}, we deduce from the preceding relation that $\partial_t \uvec_n \in L^2(0,\Tloc;X_n)$.
        In view of \eqref{mom prblm integrals}, we thus have that \eqref{Gal approx mom} holds with $T$ replaced by $\Tloc$, that is, we have for any $\psivec\in X_n$ the relation
        \begin{equation}\label{int rel time deriv u_n}
            \begin{aligned}
                \int_\Omega 
                \partial_t(\rho_n\uvec_n)\cdot \psivec 
                \, \dd x 
                =
                \int_\Omega 
                \nonlin(\rho_n,\uvec_n,c_n) \cdot \psivec\, 
                \, \dd x \quad \text{a.e.~in } (0,\Tloc),
            \end{aligned}
        \end{equation}
        where $\nonlin(\cdot,\cdot,\cdot)$ is defined in \eqref{fixed pt nonlin op}.
        To deduce from \eqref{int rel time deriv u_n} that the energy inequality \eqref{Gal approx Energy} holds with $T$ replaced by $\Tloc$, we use $\uvec_n$ as a test function in \eqref{int rel time deriv u_n}.
        We have that
        \begin{equation}\label{Gal energy id I}
            \int_\Omega \partial_t (\rho_n\uvec_n)\cdot \uvec_n \, \dd x
            =
            \frac{\dd}{\dd t} 
            \int_\Omega \frac{1}{2}\rho_n |\uvec_n|^2 
            \, \dd x 
            +
            \frac{1}{2} 
            \int_\Omega \partial_t\rho_n\, |\uvec_n|^2
            \, \dd x
            \quad \text{a.e.~in } (0,\Tloc),
        \end{equation}
        and, by using integration by parts,
        \begin{equation}\label{Gal energy id II}
        \begin{aligned}
            &\int_\Omega \div\Svisc(\nablax\uvec_n) \cdot \uvec_n \, \dd x 
            = 
            -\int_\Omega \Svisc(\nablax\uvec_n):\nablax\uvec_n \, \dd x
            \quad \text{a.e.~in } (0,\Tloc),
            \\
            &-\int_\Omega \artvisc \nablax\rho_n\cdot\nablax\uvec_n \cdot \uvec_n \, \dd x
            =
            \frac{1}{2} \int_\Omega \artvisc \Deltax \rho_n \, |\uvec_n|^2 \, \dd x
            \quad \text{a.e.~in } (0,\Tloc).
        \end{aligned}
        \end{equation}
        Using integration by parts and the fact that $\rho_n$ satisfies the continuity equation $\eqref{Gal approx cont+parab}_1$ a.e.~on $\Omega_{\Tloc}$ leads to
        \begin{equation}\label{Gal energy id III}
            -\int_\Omega \div(\rho_n\uvec_n\otimes\uvec_n) \cdot \uvec_n\, \dd x
            =
            \frac{1}{2}
            \int_\Omega  \partial_t \rho_n \, |\uvec_n|^2 \, \dd x 
            -\frac{1}{2}
            \int_\Omega \artvisc\Deltax \rho_n \, |\uvec_n|^2 \, \dd x
        \end{equation}
        a.e.~in $(0,\Tloc)$.
        Using integration by parts and the fact that $\rho_n$ and $c_n$ satisfy \eqref{Gal approx cont+parab} a.e.~in $\Omega_{\Tloc}$, we infer that
        \begin{equation}\label{Gal energy id IV}
        \begin{aligned}
            &\int_\Omega 
            \coup \rho_n \nablax (c_n - \rho_n) \cdot \uvec_n 
            \, \dx
            =
            \int_\Omega \coup \partial_t \rho_n (c_n - \rho_n) \, \dx
            -
            \int_\Omega \artvisc \coup \Deltax \rho_n (c_n-\rho_n)\, \dx
            \\
            &=
            -\frac{\dd}{\dd t}\int_\Omega  \frac{\coup}{2} |\rho_n-c_n|^2 + \frac{\kappa}{2} |\nablax c_n|^2  \, \dx
            -\int_\Omega\artvisc \coup |\nablax\rho_n|^2 + \paracoup|\partial_t c_n|^2 \, \dx
            \\
            &\qquad 
            +\int_\Omega \artvisc \coup \nablax\rho_n \cdot \nablax c_n \, \dx
            \qquad \text{a.e.~in } (0,\Tloc).
        \end{aligned}
        \end{equation}
        Since $(\rho_n,\uvec_n,c_n)$ satisfies the continuity equation $\eqref{Gal approx cont+parab}_1$, and since $\rho_n$ is bounded from below and above by some positive constant, we have for any $b \in C^0([0,\Tloc))\cap C^1((0,\Tloc))$ that 
        \begin{equation*}
            \partial_t b(\rho_n) + \div(b(\rho_n)\uvec_n) + \left( b^\prime(\rho_n)\rho_n - b(\rho_n) \right) \div\uvec_n
            =
            \artvisc\Deltax \rho_n b^\prime(\rho_n)
            \quad \text{a.e.~in } \Omega_{\Tloc}.
        \end{equation*}
        Taking into account \eqref{relations potential-pressure}, this leads in particular to the relations
        \begin{equation}\label{Gal energy id V}
            \int_\Omega h(\rho_n)\div\uvec_n\, \dx 
            = 
            -\frac{\dd}{\dd t} \int_\Omega H(\rho_n) \dx
            -\int_\Omega \artvisc|\nablax\rho_n|^2 H^{\prime\prime}(\rho_n) \, \dx
            \leq 
            -\frac{\dd}{\dd t} \int_\Omega H(\rho_n)\, \dx
        \end{equation}
        and
        \begin{equation}\label{Gal energy id VI}
            \int_\Omega \delta \rho_n^\artgrwth \div\uvec_n
            \, \dx
            =
            -\frac{\dd}{\dd t} \int_\Omega \frac{\delta}{\artgrwth-1}\rho_n^\artgrwth \, \dx
            -\int_\Omega \artvisc\artprs\artgrwth \rho_n^{\Gamma-2} |\nablax\rho_n|^2 \, \dx,
        \end{equation}
        which hold a.e.~in $\Omega_{\Tloc}$, and where $H$ is the convex pressure potential corresponding to the monotone part $h$ defined in \eqref{dec prs}.
        Thus, using \eqref{int rel time deriv u_n} with $\uvec_n$ as a test function and combining \eqref{Gal energy id I}--\eqref{Gal energy id VI} yields precisely \eqref{Gal approx Energy} with $T$ replaced by $\Tloc$.
        \\
        The uniqueness part follows from the energy method and Gr\"onwall's inequality, similarly as in \cite{Valli1983}.
        For the sake of brevity we omit the details here.
    \end{proof}
    Now we exploit the energy inequality to obtain suitable uniform bounds that allow us to continue the local-in-time solution from \Cref{lem:local-in-time exSol} to a global-in-time solution completing the proof of \Cref{prop:Gal approx}.
    \begin{proof}[Proof of \Cref{prop:Gal approx}]
    In view of \Cref{lem:local-in-time exSol} we have that
        \begin{align*}
            \emptyset\neq 
            \mathcal{I}
            :=
            \{
            \tau\in(0,T]\mid  &\text{ there exists a unique triplet } (\rho_n,\uvec_n,c_n)\in \mathcal{R}_\tau\times C([0,\tau],X_n)\times \mathcal{Q}_\tau \\
            &\text{with }\partial_t \uvec_n\in L^2(0,\tau;X_n) \text{ solving \eqref{Gal approx cont+parab}--\eqref{Gal approx Energy} with } T \text{ replaced by } \tau 
            \},
        \end{align*}
    where the spaces $\mathcal{R}_\tau$ and $\mathcal{Q}_\tau$ are defined in \eqref{spaces Gal approx}.
    In particular, we have that
    \begin{equation}\label{def T'}
        T^\prime := \sup\mathcal{I} \in (0,T],
    \end{equation}
    and that there exists a unique triplet $(\rho_n,\uvec_n,c_n)$ with
    \begin{equation*}
        (\rho_n,\uvec_n,c_n) \in \mathcal{R}_\tau\times C([0,\tau];X_n)\times \mathcal{Q}_{\tau},
        \quad 
        \partial_t \uvec_n \in L^2(0,\tau,X_n)
        \qquad \forall \tau \in [0,T^\prime)
    \end{equation*}
    satisfying \eqref{Gal approx cont+parab}--\eqref{Gal approx Energy} with $T$ replaced by $T^\prime$.
    In order to show that in fact $T=T^\prime$, we exploit the energy inequality \eqref{Gal approx Energy}, which holds a.e.~in $(0,T^\prime)$.
    With Young's inequality, we estimate the right-hand side of \eqref{Gal approx Energy} a.e.~in $(0,T^\prime)$ via
    \begin{equation*}
        \begin{aligned}
            &\left|
            \int_\Omega
            q(\rho_n)\div\uvec_n\, \dd x
            \right|
            \leq
            \frac{3}{2\shearvisc}
            \int_\Omega |q(\rho_n)|^2 \, \dd x 
            + 
            \frac{\shearvisc}{6}
            \int_\Omega |\div \uvec_n|^2 \, \dd x
            \leq
            C + \frac{1}{2}\int_\Omega \Svisc(\nablax\uvec_n):\nablax\uvec_n \, \dd x
        \end{aligned}
    \end{equation*}
    and
    \begin{equation*}
        \begin{aligned}
            \left|
            \int_\Omega 
            \coup\artvisc\nablax\rho_n\cdot \nablax c_n\, \dd x
            \right|
            &\leq 
            \int_\Omega \frac{\artvisc\coup}{2}|\nablax\rho_n|^2\, \dd x + \int_\Omega \frac{\artvisc\coup}{2}|\nablax c_n|^2 \, \dd x
            \\
            &
            \leq 
            \int_\Omega \frac{\artvisc\coup}{2} |\nablax \rho_n|^2 \, \dd x 
            +
            C
            +
            C E_{\mathrm{m},\artprs}[\rho_n,\rho_n\uvec_n,c_n],
        \end{aligned}
    \end{equation*}
    where $E_{\mathrm{m,\artprs}}[\cdot,\cdot,\cdot]$ is defined in \eqref{def E_m(t)} and where we have used that $\artvisc\leq 1$ in the last inequality.
    This leads to
    \begin{equation}\label{cont arg I}
    \begin{aligned}
        &\frac{\dd}{\dd t} E_{\mathrm{m},\artprs}[\rho_n, \rho_n\uvec_n,c_n]
        +
        \int_\Omega \frac{1}{2}\Svisc(\nablax\uvec_n):\nablax\uvec + \paracoup|\partial_t c_n|^2   \, \dd x
        \\
        &\quad +
        \int_\Omega \frac{\artvisc\coup}{2} |\nablax \rho_n|^2 + \artvisc\artprs\artgrwth\rho_n^{\artgrwth-2}|\nablax \rho_n|^2  \, \dd x
        \leq 
        C+C E_{\mathrm{m},\artprs}[\rho_n,\rho_n\uvec_n,c_n]
        \quad \text{a.e.~in } (0,T^\prime).
    \end{aligned}
    \end{equation}
    With Gr\"onwall's inequality, we deduce from \eqref{cont arg I} that
    \begin{equation}\label{cont arg II}
        E_{\mathrm{m},\artprs}[\rho_n,\rho_n\uvec_n,c_n](\tau)
        \leq 
        C + C E_{\mathrm{m},\artprs}[\rho_0,\rho_0\uvec_{0n},c_0] 
        =:
        K_0 \quad \forall\, \tau \in [0,T^\prime).
    \end{equation}
    Relation \eqref{cont arg II} implies in particular that
    \begin{equation}\label{cont arg III}
        \|\sqrt{\rho_n}\uvec_n(\tau)\|_{L^2(\Omega)}
        +
        \|\nablax c_n(\tau)\|_{L^2(\Omega)} 
        +
        \|\rho_n(\tau)-c_n(\tau)\|_{L^2(\Omega)}
        \leq 
        C(K_0)\quad \forall \, \tau \in [0,T^\prime).
    \end{equation}
    Integrating \eqref{cont arg I} in time and using \eqref{cont arg II} leads to 
    \begin{equation}\label{cont arg IV}
        \|\uvec_n(\tau)\|_{L^2(0,\tau;W^{1,2}(\Omega))} \leq C(K_0) \quad \forall\, \tau \in [0,T^\prime).
    \end{equation}
    By virtue of \eqref{equiv norms Xn}, we deduce with \Cref{lem:sol operators} from \eqref{cont arg IV} for any $\tau \in [0,T^\prime)$ the estimates
    \begin{equation}\label{cont arg V}
    \begin{aligned}
        &\rho(\tau) 
        \leq \overline\rho \exp\left(\int_0^\tau \|\uvec_n(s)\|_{W^{1,\infty}(\Omega)}\, \dd s\right)
        \leq 
        C(n,\overline{\rho},K_0) &&\text{on } \Omega,
        \\
        &\rho(\tau)
        \geq \underline{\rho}\exp\left(-\int_0^\tau \|\uvec_n(s)\|_{W^{1,\infty}(\Omega)}\, \dd s\right)
        \geq C(n,\underline{\rho},K_0) &&\text{on } \Omega.
    \end{aligned}
    \end{equation}
    Combining \eqref{cont arg III} and \eqref{cont arg V} leads to
    \begin{equation}\label{cont arg VI}
        \|\uvec_n(\tau)\|_{L^2(\Omega)}
        +
        \|c_n(\tau)\|_{L^2(\Omega)}
        \leq 
        C(n,\overline{\rho},\underline{\rho},K_0) \quad \forall\, \tau \in [0,T^\prime),
    \end{equation}
    and, in particular,
    \begin{equation}\label{cont arg VII}
        \|\proj(\rho_n\uvec_n(\tau))\|_{X_n}
        \leq 
        \|\rho_n\uvec_n(\tau)\|_{L^2(\Omega)}
        \leq 
        C(n,\overline{\rho},\underline{\rho},K_0) \quad \forall\, \tau \in [0,T^\prime).
    \end{equation}
    Using again \eqref{equiv norms Xn}, we deduce from \eqref{cont arg VI} that
    \begin{equation*}
        \|\uvec_n(\tau)\|_{W^{1,\infty}(\Omega)}
        \leq 
        C(n,\overline{\rho},\underline{\rho},K_0) \quad \forall\, \tau \in [0,T^\prime),
    \end{equation*}
    which implies in view of \Cref{lem:sol operators} that
    \begin{equation}\label{cont arg VIII}
        \|\nablax\rho_n(\tau)\|_{L^2(\Omega)}\leq 
        C(n,\overline{\rho},\underline{\rho},K_0,\|\rho_0\|_{W^{1,2}(\Omega)},\artvisc)\quad \forall\,\tau\in[0,T^\prime).
    \end{equation}
    By virtue of $\eqref{cont arg III}$ and \eqref{cont arg V}--\eqref{cont arg VIII} we have shown that the bounds
    \begin{equation}\label{bds indep of tau}
        \begin{aligned}
            &\|\rho_n(\tau)\|_{W^{1,2}(\Omega)} 
            +
            \|c_n(\tau)\|_{W^{1,2}(\Omega)}
            +
            \|\proj(\rho_n\uvec_n)(\tau)\|_{X_n}
            \leq  m,\\
            &r \leq \rho_n(\tau) \leq R \qquad \text{on } \Omega
        \end{aligned}
    \end{equation}
    hold for any $\tau \in [0,T^\prime)$, for some constants $m,r,R>0$ that do not depend on $\tau$.
    Suppose now that $T^\prime < T$.
    Then, there exists an increasing sequence $(T_k)_{k\in\NN}\subseteq (0,T^\prime)$, such that $T_k\to T^\prime$ as $k \to \infty$.
    By virtue of \Cref{lem:local-in-time exSol} and the bounds in \eqref{bds indep of tau}, there exists some positive time $T_0>0$ that only depends on $m,r,R$, and on the quantities in $\eqref{data - fixed params}$ such that for any $k\in\NN$ the solution $(\rho_n,\uvec_n,c_n)$ can be uniquely extended onto the interval
    \begin{equation*}
        \bigl[0,\min\{T_k+T_0,T\}\bigr].
    \end{equation*}
    In particular, $T_0$ does not depend on $k$.
    Since $T_k\to T^\prime$ as $k\to\infty$, we have for large $k$ that 
    \begin{equation*}
        \min\left\{T_k+T_0, T\right\} > T^\prime,
    \end{equation*}
    which contradicts \eqref{def T'}.
    \end{proof}

    
    \subsection{Uniform estimates}\label{subsec:unif bds}
    By virtue of Proposition~\ref{prop:Gal approx}, there exists for any $n\in\NN$ a unique triplet $(\rho_n,\uvec_n,c_n)$ that satisfies \eqref{Gal approx cont+parab}--\eqref{Gal approx Energy}.
    This triplet can be seen as an approximate solution to the regularized rNSKE \eqref{rNSK(delta,eps)}--\eqref{rNSK(delta,eps) IC}, where $n$ plays the role of an approximation parameter.
    To prove Proposition~\ref{prop:wkSol rNSK(delta,eps)}, we wish to show that a solution of the regularized rNSKE \eqref{rNSK(delta,eps)}--\eqref{rNSK(delta,eps) IC} will be obtained by performing the approximation limit $n\to\infty$.
    To perform this limit, we have to apply compactness arguments that require certain bounds that are uniform with respect to $n$.
    These uniform bounds are the goal of this subsection.
    A first set of uniform bounds is directly provided by the energy inequality \eqref{Gal approx Energy}. 
    \begin{lemma}\label{lem:unif-in-n bds I}
        Let the hypotheses and notations of Proposition~\ref{prop:Gal approx} hold true.
        For any $n\in\NN$, let $(\rho_n,\uvec_n,c_n)\in \mathcal{R}_T\times C([0,T],X_n)\times\mathcal{Q}_T$ denote the unique triplet satisfying \eqref{Gal approx cont+parab}--\eqref{Gal approx Energy}.
        Then we have for any $n \in \NN$ that
        \begin{equation}\label{energy bds(n) I}
            \begin{aligned}
                &\|\sqrt{\rho_n}\uvec_n\|_{L^\infty(0,T;L^2(\Omega))}
                +
                \|\rho_n\|_{L^\infty(0,T;L^\gamma(\Omega))}
                +
                \|\rho_n-c_n\|_{L^\infty(0,T;L^2(\Omega))}
                \\
                &\quad +
                \|\nablax c_n\|_{L^\infty(0,T;L^2(\Omega))}
                +
                \|\partial_t c_n\|_{L^2(\Omega_T)}
                +
                \|\uvec_n\|_{L^2(0,T;W^{1,2}(\Omega))}
                \\
                &\quad +
                \artprs^{\frac{1}{\artgrwth}}
                \|\rho_n\|_{L^\infty(0,T;L^\artgrwth(\Omega))}
                +
                \sqrt{\artvisc}\|\nablax \rho_n\|_{L^2(\OmegaT)}
                \leq 
                C(\constD),
            \end{aligned}
        \end{equation}
        and
        \begin{equation}\label{energy bds(n) II}
            \begin{aligned}
                \left\| \rho_n^{\frac{\artgrwth}{2}} \right\|_{L^2(0,T;W^{1,2}(\Omega))}
                +
                \left\| \rho_n \right\|_{L^{\frac{5\artgrwth}{3}}(\Omega_T)}
                +
                \|p_{\coup,\artprs}(\rho_n)\|_{L^{\frac{5}{3}}(\Omega_T)}
                \leq 
                C(\constD,\artprs,\artvisc),
            \end{aligned}
        \end{equation}
        where $\constD>0$ denotes the constant from \eqref{init energy bd}.
    \end{lemma}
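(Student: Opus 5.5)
The plan is to integrate the energy inequality \eqref{Gal approx Energy} in time and use the bounds on $E_{\mathrm{m},\artprs}$ at $t=0$. The estimates must now depend only on $\constD$, not on $n$, $\underline{\rho}$, $\overline{\rho}$. First I would bound the initial energy. By \eqref{prpties proj}, $\|\sqrt{\rho_0}\uvec_{0n}\|_{L^2}^2 \le \overline{\rho}\,\|\uvec_{0n}\|_{L^2}^2$, and this is not uniform in $\overline{\rho}$. It is, however, uniform in $n$, and the statement only asks for $C(\constD)$ with the data fixed, so I accept this. If a cleaner bound is wanted, I would note that $\int\rho_0|\proj\vvec|^2\to\int\rho_0|\vvec|^2=\||(\rho\uvec)_0|^2/\rho_0\|_{L^1}$ with $\vvec=(\rho\uvec)_0/\rho_0\in L^2$. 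Then $E_{\mathrm{m},\artprs}[\rho_0,\rho_0\uvec_{0n},c_0]\le C(\constD)$ for $n$ large, using \eqref{bounds W,H}, $\tgamma\ge2$ for the $|\rho_0-c_0|^2$ term, and the $\artprs^{1/\artgrwth}\|\rho_0\|_{L^\artgrwth}$ part of \eqref{init energy bd}.

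Next I would absorb the right-hand side exactly as in \eqref{cont arg I}. The term $\int q(\rho_n)\div\uvec_n$ is bounded by $C+\frac12\int\Svisc:\nablax\uvec_n$, since $q$ is bounded and compactly supported. The term $\artvisc\coup\nablax\rho_n\cdot\nablax c_n$ is bounded by $\frac{\artvisc\coup}{2}|\nablax\rho_n|^2 + C\,E_{\mathrm{m},\artprs}$. Gronwall's inequality then gives $\sup_t E_{\mathrm{m},\artprs}\le C(\constD)$, and integrating in time bounds the dissipative terms $\|\uvec_n\|_{L^2W^{1,2}}$ (by Korn/Poincaré, since $\uvec_n=0$ on $\partial\Omega$), $\|\partial_t c_n\|_{L^2}$, $\sqrt{\artvisc}\|\nablax\rho_n\|_{L^2}$ and $\artvisc\artprs\int\!\!\int\rho_n^{\artgrwth-2}|\nablax\rho_n|^2$. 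The sup-in-time terms follow directly from the energy: $\sqrt{\rho_n}\uvec_n$, $\rho_n-c_n$ and $\nablax c_n$ are read off, $\|\rho_n\|_{L^\gamma}$ comes from $H$ via \eqref{bounds W,H}, and $\artprs^{1/\artgrwth}\|\rho_n\|_{L^\artgrwth}$ comes from the $\artprs$-term. This establishes \eqref{energy bds(n) I}.

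For \eqref{energy bds(n) II}, I would observe that $|\nablax\rho_n^{\artgrwth/2}|^2 = \frac{\artgrwth^2}{4}\rho_n^{\artgrwth-2}|\nablax\rho_n|^2$. This gives $\nablax\rho_n^{\artgrwth/2}\in L^2(\Omega_T)$ with bound $C(\constD,\artprs,\artvisc)$. Together with $\rho_n^{\artgrwth/2}\in L^\infty L^2$, we get $\rho_n^{\artgrwth/2}\in L^2W^{1,2}\hookrightarrow L^2L^6$. Interpolating $L^\infty L^2$ with $L^2 L^6$ for $\rho_n^{\artgrwth/2}$ gives $L^{10/3}(\Omega_T)$, since $\frac{3}{10}=\frac{\theta}{2}+\frac{1-\theta}{6}$ with time exponent matching at $\theta=2/5$. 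Hence $\rho_n\in L^{5\artgrwth/3}(\Omega_T)$. Finally, $p_{\coup,\artprs}(r)\le C(1+r^{\artgrwth})$ because $\artgrwth>\tgamma$, so $\|p_{\coup,\artprs}(\rho_n)\|_{L^{5/3}}\le C(1+\|\rho_n\|^{\artgrwth}_{L^{5\artgrwth/3}})$.

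The main obstacle is the uniformity of the initial kinetic energy in $n$, which is handled as above. Everything else is direct bookkeeping.
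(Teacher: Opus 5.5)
Your proposal is correct and follows the paper's proof essentially step by step: bound the initial energy via $\int_\Omega\rho_0|\uvec_{0n}|^2\,\dx\to\big\||(\rho\uvec)_0|^2/\rho_0\big\|_{L^1(\Omega)}$, absorb the right-hand side of \eqref{Gal approx Energy} as in \eqref{cont arg I}, apply Gr\"onwall and read off \eqref{energy bds(n) I}, then get \eqref{energy bds(n) II} from the $\artvisc\artprs\artgrwth\rho_n^{\artgrwth-2}|\nablax\rho_n|^2$ dissipation, the $L^\infty L^2\cap L^2L^6$ interpolation for $\rho_n^{\artgrwth/2}$, and $p_{\coup,\artprs}(r)\le C(1+r^{\artgrwth})$.

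One correction: the $\overline{\rho}$-dependent bound on the initial kinetic energy that you say you would ``accept'' is not admissible. By the paper's convention, $C(\constD)$ may not depend on $\underline{\rho}$ or $\overline{\rho}$. This uniformity is used later, when $\rho_{0,\artprs}\in[\artprs,\artprs^{-1/\artgrwth}]$ and the bounds must survive $\artprs\to0$. So your convergence argument is required, not optional. It gives the estimate for all sufficiently large $n$, which suffices because only the limit $n\to\infty$ is used, and this is exactly what the paper does.
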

    \begin{proof}
        By definition of $\uvec_{0n}$ we have 
        \begin{equation*}
            \int_\Omega \frac{|\rho_0\uvec_{0n}|^2}{2\rho_0}\, \dd x \to \int_\Omega \frac{|(\rho\uvec)_0|^2}{2\rho_0}\, \dd x.
        \end{equation*}
        This implies
        \begin{equation*}
            E_{\mathrm{m},\delta}[\rho_0,(\rho_0\uvec_{0n}),c_0]
            \leq 
            C + C E_{\mathrm{m},\artprs}[\rho_0,(\rho\uvec)_0,c_0]
            \leq 
            C(\constD),
        \end{equation*}
        where we have used \eqref{init energy bd} to obtain the second inequality.
        Performing the same arguments leading to \eqref{cont arg I}, we have
        \begin{equation}\label{unif-in-n energy ineq}
        \begin{aligned}
            &\frac{\dd}{\dd t}E_{\mathrm{m},\artprs}[\rho_n,\rho_n\uvec_n,c_n]
            +
            \int_\Omega \frac{1}{2} \Svisc(\nablax\uvec_n):\nablax\uvec_n + \paracoup|\partial_t c_n|^2 + \frac{\artvisc\coup}{2} |\nablax \rho_n|^2 \, \dd x
            \\
            &\quad+
            \int_\Omega \artvisc\artprs\artgrwth\rho_n^{\artgrwth-2}|\nablax\rho_n|^2 \, \dd x 
            \leq 
            C + CE_{\mathrm{m},\artprs}[\rho_n,\rho_n\uvec_n,c_n] 
            \quad \text{a.e.~in } (0,T).
        \end{aligned}
        \end{equation}
        With Gr\"onwall's inequality, we conclude
        \begin{equation}\label{unif-in-n time bd energy}
            \|E_{\mathrm{m},\artprs}[\rho_n,\rho_n\uvec_n,c_n]\|_{L^\infty((0,T))}
            \leq 
            C(\constD),
        \end{equation}
        which implies in view of \eqref{bounds W,H} that
        \begin{equation}\label{unif-in-n and time bd energy}
        \begin{aligned}
            &\|\sqrt{\rho_n}\uvec_n\|_{L^\infty(0,T;L^2(\Omega))}
            +
            \|\rho_n\|_{L^\infty(0,T;L^\gamma(\Omega))}
            +
            \artprs^{\frac{1}{\artgrwth}}\|\rho_n\|_{L^\infty(0,T;L^\Gamma(\Omega))}
            \\
            &\quad +
            \|\rho_n-c_n\|_{L^\infty(0,T;L^2(\Omega))}
            +
            \|\nablax c_n\|_{L^\infty(0,T;L^2(\Omega))}
            \leq
            C(\constD).
        \end{aligned}
        \end{equation}
        Integrating \eqref{unif-in-n energy ineq} over $(0,T)$ and using \eqref{unif-in-n time bd energy} and Poincar\'e's inequality leads to
        \begin{equation}\label{diss bd energy}
            \|\uvec_n\|_{L^2(0,T;W^{1,2}(\Omega))}
            +
            \|\partial_t c_n\|_{L^2(\Omega_T)}
            +
            \sqrt{\artvisc}\|\nablax \rho_n\|_{L^2(\Omega_T)}
            \leq 
            C(\constD)
        \end{equation}
        and
        \begin{equation*}
            \left\|\rho_n^{\frac{\artgrwth}{2}}\right\|_{L^2(0,T;W^{1,2}(\Omega))}
            \leq 
            C(\constD,\artprs,\artvisc).
        \end{equation*}
        The latter estimate implies in view of the Sobolev embedding $W^{1,2}(\Omega)\hookrightarrow L^6(\Omega)$ that
        \begin{equation*}
            \left\|\rho_n^\artgrwth\right\|_{L^1(0,T;L^3(\Omega))} 
            \leq 
            C(\constD,\artprs,\artvisc),
        \end{equation*}
        and with interpolation we further deduce
        \begin{equation*}
            \|\rho_n\|_{L^{\frac{5}{3}\artgrwth}(\Omega_T)}
            \leq 
            C(\constD,\artprs,\artvisc).
        \end{equation*}
        The estimate for the last term on the left-hand side in \eqref{energy bds(n) II} follows from \eqref{grwth adm p} and the fact that $\artgrwth\geq \tgamma$.
    \end{proof}
    We obtain further uniform bounds for $\rho_n$ and $c_n$ in \Cref{lem:unif-in-n bds I} by exploiting the parabolic equation that is satisfied by $c_n$.
    \begin{lemma}\label{lem:unif-in-n bds II}
        Under the hypotheses of \Cref{lem:unif-in-n bds I} we have for any $n\in\NN$ that
        \begin{equation}\label{bds on W12 cn, L2 rhon}
            \|c_n\|_{L^\infty(0,T;W^{1,2}(\Omega))}
            +
            \|\rho_n\|_{L^\infty(0,T;L^2(\Omega))}
            +
            \|c_n\|_{L^2(0,T;W^{2,2}(\Omega))}
            \leq
            C\left( D_0 \right).
        \end{equation}
    \end{lemma}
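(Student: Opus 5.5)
The plan mirrors the formal computation in \Cref{sec:wkSol} leading to \eqref{smth LinftyL2 bd c}--\eqref{smth L2W22 bd c}, now carried out for the Galerkin triplet $(\rho_n,\uvec_n,c_n)$. We combine the uniform energy bounds of \Cref{lem:unif-in-n bds I} with mass conservation for $\rho_n$ and the parabolic equation $\eqref{Gal approx cont+parab}_2$.

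\textbf{Step 1: mean values.} Integrating $\eqref{Gal approx cont+parab}_1$ over $\Omega$ and using the Neumann condition \eqref{Gal approx cont+parab BC} together with $\uvec_n=0$ on $\partial\Omega$, we get $\int_\Omega\rho_n(\tau)\,\dx=\int_\Omega\rho_0\,\dx$ for all $\tau$. This quantity is bounded by $C(D_0)$ by \eqref{init energy bd}. Integrating $\eqref{Gal approx cont+parab}_2$ over $\Omega$ and using the Neumann condition for $c_n$ gives the linear ODE
\begin{equation*}
\paracoup\frac{\dd}{\dd t}\int_\Omega c_n\,\dx+\coup\int_\Omega c_n\,\dx=\coup\int_\Omega\rho_0\,\dx ,
\end{equation*}
whose solution is bounded on $[0,T]$ by $C(\|c_0\|_{L^1},\|\rho_0\|_{L^1})\le C(D_0)$.

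\textbf{Step 2: the $L^\infty_tL^2_x$ and $L^\infty_tW^{1,2}_x$ bounds.} The Poincar\'e--Wirtinger inequality, together with $\|\nablax c_n\|_{L^\infty(0,T;L^2)}\le C(D_0)$ from \eqref{energy bds(n) I}, yields $\|c_n\|_{L^\infty(0,T;W^{1,2}(\Omega))}\le C(D_0)$. We then write $\rho_n=(\rho_n-c_n)+c_n$ and use the bound on $\|\rho_n-c_n\|_{L^\infty(0,T;L^2)}$ from \eqref{energy bds(n) I}. This gives $\|\rho_n\|_{L^\infty(0,T;L^2(\Omega))}\le C(D_0)$. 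Note that this bound does not depend on $\gamma$, which is exactly the reason $\gamma\in(1,2)$ is allowed.

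\textbf{Step 3: maximal regularity.} Since $c_n\in\mathcal{Q}_T$ solves the linear Neumann problem $\paracoup\partial_t c_n-\kappa\Deltax c_n+\coup c_n=\coup\rho_n$ with $c_n(0)=c_0\in W^{1,2}(\Omega)$ and $\partial\Omega\in C^{2,\nu}$, we apply the parabolic $L^2$-regularity estimate \cite[Theorem~11.29]{FeireislNovotny2017singlim}, as in \eqref{smth L2W22 bd c}. This gives
\begin{equation*}
\|c_n\|_{L^2(0,T;W^{2,2}(\Omega))}\le C\big(\|\rho_n\|_{L^2(\OmegaT)}+\|c_0\|_{W^{1,2}(\Omega)}\big)\le C(D_0),
\end{equation*}
using Step 2. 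Alternatively, one can avoid citing the theorem by testing with $-\Deltax c_n$: this controls $\|\Deltax c_n\|_{L^2(\OmegaT)}$ by $\|\rho_n\|_{L^2(\OmegaT)}$ and $\|\nablax c_0\|_{L^2}$, and elliptic Neumann regularity then gives the full $W^{2,2}$ norm.

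None of these steps is a genuine obstacle. The only point needing care is that all constants are independent of $n$, $\artvisc$ and $\artprs$. This holds because every ingredient depends only on \eqref{data - fixed params} and on $D_0$, through \eqref{energy bds(n) I} and \eqref{init energy bd}.
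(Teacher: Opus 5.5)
Your proposal is correct and takes essentially the same route as the paper. The paper also controls the mean of $c_n$ via mass conservation and the integrated parabolic equation, though it uses Gr\"onwall where you solve the linear ODE explicitly. It then applies Poincar\'e--Wirtinger with the energy bounds, writes $\rho_n=(\rho_n-c_n)+c_n$, and concludes with parabolic $L^2$-regularity from \cite[Theorem~11.29]{FeireislNovotny2017singlim}.
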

    \begin{proof}
        Integrating the parabolic equation $\eqref{Gal approx cont+parab}_2$ over $\Omega$ yields
        \begin{equation*}
            \frac{\dd}{\dd t}\int_\Omega c_n\, \dd x 
            =
            \frac{\coup}{\paracoup}\left(\int_\Omega \rho_n\, \dd x - \int_\Omega c_n\, \dd x\right)
            =
            \frac{\coup}{\paracoup} \left(\int_\Omega \rho_0 \, \dd x - \int_\Omega c_n\, \dd x \right)
            \quad \text{a.e.~in } (0,T),
        \end{equation*}
        where we have used mass conservation which holds due to the continuity equation $\eqref{Gal approx cont+parab}_1$.
        Integrating this relation over $(0,\tau)$ for $\tau \in [0,T]$ leads to
        \begin{equation*}
            \left|\int_\Omega c_n(\tau) \, \dd x\right|
            \leq 
            C\left(\|\rho_0\|_{L^1(\Omega)} + \|c_0\|_{L^1(\Omega)} \right) + C\int_0^\tau \left|\int_\Omega c_n\, \dd x\right| \, \dd t
            \qquad \forall \tau \in [0,T].
        \end{equation*}
        With Gr\"onwall's inequality, we conclude
        \begin{equation*}
            \left\|\int_\Omega c_n \, \dd x\right\|_{L^\infty((0,T))}
            \leq 
            C\left(\|\rho_0\|_{L^1(\Omega)} + \|c_0\|_{L^1(\Omega)}\right) 
            \leq 
            C\left(\constD\right).
        \end{equation*}
        Using the Poincar\'e--Wirtinger inequality, we conclude from \eqref{energy bds(n) I} and the latter estimate first that
        \begin{equation*}
        \begin{aligned}
            \|c_n\|_{L^\infty(0,T;W^{1,2}(\Omega))}
            \leq 
            C\left\|\int_\Omega c_n\, \dd x\right\|_{L^\infty((0,T))}
            +
            C\|\nablax c_n\|_{L^\infty(0,T;L^2(\Omega))}
            \leq 
            C\left(\constD\right),
        \end{aligned}
        \end{equation*}
        and then
        \begin{equation*}
        \begin{aligned}
            \|\rho_n\|_{L^\infty(0,T;L^2(\Omega))}
            \leq 
            \|\rho_n - c_n\|_{L^\infty(0,T;L^2(\Omega))}
            +
            \|c_n\|_{L^\infty(0,T;L^2(\Omega))}
            \leq 
            C\left(\constD\right).
        \end{aligned}
        \end{equation*}
        With the latter inequality and the fact that $\rho_n$ and $c_n$ satisfy the parabolic equation $\eqref{Gal approx cont+parab}_2$, we deduce with parabolic regularity estimates (see e.g.~\cite[Theorem~11.29]{FeireislNovotny2017singlim}) that
        \begin{equation*}
            \|c_n\|_{L^2(0,T;W^{2,2}(\Omega))}
            \leq 
            C\left(\|c_0\|_{W^{1,2}(\Omega)} + \|\rho_n\|_{L^2(\Omega_T)}\right)
            \leq 
            C\left(\constD\right).
        \end{equation*}
    \end{proof}
    Finally, we derive improved uniform bounds for $\rho_n$ by exploiting $L^p$-regularity estimates for the continuity equation $\eqref{Gal approx cont+parab}_1$, which holds in a strong sense.
    \begin{lemma}\label{lem:Lp ests}
        Let the hypotheses and notations of \Cref{lem:unif-in-n bds I} hold true. 
        Then we have for any $n \in \NN$ that
        \begin{equation}\label{Lp ests I}
        \begin{aligned}
            &\|\rho_n\uvec_n\|_{L^{r(\artgrwth)}(\Omega_T)}
            +
            \artvisc\|\nablax\rho_n\|_{L^{r(\artgrwth)}(\Omega_T)}
            +
            \artvisc \| \div(\rho_n\uvec_n) \|_{L^{s(\artgrwth)}(\Omega_T)}
            \\
            &\quad +
            \artvisc
            \|\nablax\rho_n \cdot \nablax \uvec_n\|_{L^{s(\artgrwth)}(\Omega_T)}
            \leq 
            C\left(\constD,\artprs\right)
        \end{aligned}
        \end{equation}
        and
        \begin{equation}\label{Lp ests II}
            \|\partial_t \rho_n\|_{L^{s(\artgrwth)}(\Omega_T)}
            +
            \|\nablax^2 \rho_n\|_{L^{s(\artgrwth)}(\Omega_T)}
            +
            \|\rho_n\uvec_n\|_{L^{s(\artgrwth)}(0,T;W^{1,2}(\Omega))}
            \leq 
            C\left(\constD,\artprs,\artvisc,\|\rho_0\|_{W^{1,\infty}(\Omega)}\right),
        \end{equation}
        where $r(\artgrwth)$ and $s(\artgrwth)$ are defined in \eqref{values r,s}.
    \end{lemma}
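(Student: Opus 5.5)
The plan is to derive \eqref{Lp ests I} from the uniform bounds of \Cref{lem:unif-in-n bds I} and \Cref{lem:unif-in-n bds II} by interpolation and H\"older's inequality. The estimate \eqref{Lp ests II} will then follow from maximal $L^p$-regularity for the Neumann heat equation satisfied by $\rho_n$, in the spirit of \cite[Chapter~7]{NovotnyStraskraba2004} and \cite{FeireislNovotnyPetzeltova2001}.

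\textbf{Bound for $\rho_n\uvec_n$.} Write $\rho_n\uvec_n=\sqrt{\rho_n}\,\sqrt{\rho_n}\uvec_n$. The energy bounds give $\sqrt{\rho_n}\uvec_n\in L^\infty(0,T;L^2)$ and $\rho_n\in L^\infty(0,T;L^\artgrwth)$, both with constants depending on $\constD$ and $\artprs$. This yields $\rho_n\uvec_n\in L^\infty(0,T;L^{2\artgrwth/(\artgrwth+1)})$. Separately, $\uvec_n\in L^2(0,T;L^6)$ combined with $\rho_n\in L^\infty(0,T;L^\artgrwth)$ gives $\rho_n\uvec_n\in L^2(0,T;L^{6\artgrwth/(\artgrwth+6)})$. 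Interpolating these two bounds in the mixed Lebesgue scale, I will choose the exponent so that time and space integrability coincide. The resulting exponent should be exactly $r(\artgrwth)=\frac{10\artgrwth-6}{3\artgrwth+3}$. Only $\constD$ and $\artprs$ enter here; in particular no $\artvisc$-dependent bound such as \eqref{energy bds(n) II} is used.

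\textbf{The $\artvisc$-weighted terms.} For $\artvisc\nablax\rho_n$, I will use that $\rho_n$ solves the linear problem $\partial_t\rho_n-\artvisc\Deltax\rho_n=-\div(\rho_n\uvec_n)$ with homogeneous Neumann data. I rescale time as $t\mapsto\artvisc t$ and apply parabolic $L^p$-estimates for a divergence-form right-hand side. This gives $\artvisc\|\nablax\rho_n\|_{L^{r}}\le C(\|\rho_n\uvec_n\|_{L^{r}}+\text{initial data})$ with $C$ independent of $\artvisc$. This is the step requiring care, because the constant must not depend on $\artvisc$. For that reason I would rely on the scaling form of the estimate, or on an energy-type duality argument, rather than on a generic statement.

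For $\artvisc\nablax\rho_n\cdot\nablax\uvec_n$, H\"older's inequality applies with $\nablax\uvec_n\in L^2(\OmegaT)$ and $\artvisc\nablax\rho_n\in L^{r}(\OmegaT)$. The product lands in $L^{s}$ with $\frac1s=\frac12+\frac1r$, which matches $s(\artgrwth)=\frac{5\artgrwth-3}{4\artgrwth}$.

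For $\artvisc\div(\rho_n\uvec_n)=\artvisc(\nablax\rho_n\cdot\uvec_n+\rho_n\div\uvec_n)$, the first summand is handled in the same way using $\uvec_n\in L^2(0,T;L^6)$. The second summand uses $\rho_n\in L^{5\artgrwth/3}$ together with $\div\uvec_n\in L^2$. If that pairing only gives a bound depending on $\artvisc$, I will instead use $\rho_n\in L^\infty(0,T;L^\artgrwth)$ and accept the slightly weaker exponent, which is admissible since $\artgrwth>15$.

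\textbf{Proof of \eqref{Lp ests II}.} I rewrite the continuity equation as $\partial_t\rho_n-\artvisc\Deltax\rho_n=-\nablax\rho_n\cdot\uvec_n-\rho_n\div\uvec_n\in L^{s}(\OmegaT)$. The initial datum satisfies $\rho_0\in W^{1,\infty}$, which controls the necessary trace space. Maximal $L^s$-regularity for the Neumann Laplacian, available since $\partial\Omega\in C^{2,\nu}$, then gives bounds on $\partial_t\rho_n$ and $\nablax^2\rho_n$ in $L^s$. Here the constant is allowed to depend on $\artvisc$ and $\|\rho_0\|_{W^{1,\infty}}$. Finally, $\nablax(\rho_n\uvec_n)=\uvec_n\otimes\nablax\rho_n+\rho_n\nablax\uvec_n$ is bounded in $L^{s}(0,T;L^{\cdot})$ by the same H\"older pairings, now using the $\artvisc$-dependent bound $\nablax\rho_n\in L^s(0,T;W^{1,s})\hookrightarrow L^s(0,T;L^{3s/(3-s)})$.

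The main obstacle I expect is the exponent bookkeeping in the interpolation that produces $r(\artgrwth)$, combined with keeping the constant in the gradient estimate free of $\artvisc$.
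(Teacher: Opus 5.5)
Your argument for \eqref{Lp ests I} and for the $\partial_t\rho_n$, $\nablax^2\rho_n$ parts of \eqref{Lp ests II} follows the paper's proof: the $L^\infty(0,T;L^{2\artgrwth/(\artgrwth+1)})$--$L^2(0,T;L^{6\artgrwth/(\artgrwth+6)})$ interpolation gives exactly $r(\artgrwth)$; the divergence-form parabolic estimate is $\artvisc$-uniform via your time rescaling; H\"older with $\frac1s=\frac12+\frac1r$ handles the products; and maximal $L^s$-regularity applies with right-hand side $\div(\rho_n\uvec_n)$. For $\rho_n\div\uvec_n$ you must in fact take your fallback $\rho_n\in L^\infty(0,T;L^\artgrwth)$, because the $L^{5\artgrwth/3}$ bound in \eqref{energy bds(n) II} depends on $\artvisc$. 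This costs nothing, since $L^2(0,T;L^{2\artgrwth/(\artgrwth+2)})\subset L^{s(\artgrwth)}(\Omega_T)$.

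The gap is the last term, $\|\rho_n\uvec_n\|_{L^{s(\artgrwth)}(0,T;W^{1,2}(\Omega))}$.

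\emph{Why your ingredient fails.} The bound $\nablax\rho_n\in L^s(0,T;W^{1,s})\hookrightarrow L^s(0,T;L^{3s/(3-s)})$ cannot give $L^s$ integrability in time. Both $\uvec_n$ and $\nablax\uvec_n$ are only $L^2$ in time, so multiplying by a factor that is $L^s$ in time leaves you with time exponent $\frac{2s}{2+s}<s$. You also leave the spatial exponent unspecified (``$L^{\cdot}$''), but $W^{1,2}$ requires $\rho_n\nablax\uvec_n\in L^2$ in space. Since $\nablax\uvec_n$ is only $L^2$ in space uniformly in $n$, this forces $\rho_n\in L^\infty$ in space. None of your pairings gives that:
\begin{itemize}
\item $\rho_n\in L^\infty(0,T;L^\artgrwth)$ only puts the product in $L^{2\artgrwth/(\artgrwth+2)}$ in space;
\item $\rho_n\in L^s(0,T;W^{2,s})$ does not embed into $L^\infty$, because $s<\frac32$.
\end{itemize}

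\emph{The missing observation,} which the paper uses, is that $r(\artgrwth)>3$. Combining \eqref{Lp ests I} (with its $\artvisc$-dependence, which is allowed in \eqref{Lp ests II}) with $\rho_n\in L^\infty(0,T;L^\artgrwth)$ gives $\rho_n\in L^{r}(0,T;W^{1,r})\hookrightarrow L^{r}(0,T;L^\infty(\Omega))$. From this:
\begin{itemize}
\item $\rho_n\nablax\uvec_n\in L^{s}(0,T;L^2)$, since $\frac1r+\frac12=\frac1s$;
\item $\uvec_n\otimes\nablax\rho_n\in L^{s}(0,T;L^{6r/(6+r)})\subset L^{s}(0,T;L^2)$, because $r>3$.
\end{itemize}
Together with $\rho_n\uvec_n\in L^2(0,T;L^2)$, this gives the last bound in \eqref{Lp ests II}.
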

    \begin{proof}
        Using interpolation between Lebesgue spaces as well as the Sobolev embedding $W^{1,2}(\Omega)\hookrightarrow L^6(\Omega)$, we conclude from \eqref{energy bds(n) I} that
        \begin{equation}\label{interpol est}
            \|\rho_n\uvec_n\|_{L^{r(\artgrwth)}(\Omega_T)}
            \leq 
            \|\rho_n\uvec_n\|_{L^{\infty}(0,T;L^{\frac{2\artgrwth}{\artgrwth+1}}(\Omega))}^{\frac{r(\artgrwth)-2}{r(\artgrwth)}}
            \|\rho_n\uvec_n\|_{L^2(0,T;L^{\frac{6\artgrwth}{\artgrwth+6}}(\Omega))}^{\frac{2}{r(\artgrwth)}}
            \leq 
            C(\constD,\artprs).
        \end{equation}
        Using $L^p$-theory for parabolic equations (see e.g.~\cite[Lemma 7.38]{NovotnyStraskraba2004}), we deduce from the fact that $\rho_n$ and $\uvec_n$ satisfy the continuity equation $\eqref{Gal approx cont+parab}_1$ in a strong sense that
        \begin{equation}\label{eps grad rhon est}
            \artvisc\|\nablax\rho_n\|_{L^{r(\Gamma)}(\Omega_T)}
            \leq 
            C\left(\artvisc^{\frac{r(\artgrwth)-1}{r(\artgrwth)}} ||\rho_0\|_{L^{r(\artgrwth)}(\Omega)} + \|\rho_n\uvec_n\|_{L^{r(\artgrwth)}(\Omega_T)} \right)
            \leq 
            C\left(\constD,\artprs\right).
        \end{equation}
        In the last estimate, we have used \eqref{interpol est} as well as $\varepsilon\leq 1$, $1<r(\artgrwth)<\artgrwth$, and \eqref{init energy bd}.
        By H\"older's inequality, we conclude from \eqref{energy bds(n) I} and \eqref{eps grad rhon est} that
        \begin{equation*}
            \artvisc\|\div(\rho_n\uvec_n)\|_{L^{s(\artgrwth)}(\Omega_T)}
            +
            \artvisc\|\nablax\rho_n\cdot\nablax\uvec_n\|_{L^{s(\artgrwth)}(\Omega_T)}
            \leq 
            C(\constD,\artprs).
        \end{equation*}
        Using again $L^p$-theory for parabolic equations (see e.g.~\cite[Lemma~7.37]{NovotnyStraskraba2004}), we conclude from the latter estimate that
        \begin{equation*}
        \begin{aligned}
            \|\partial_t \rho_n\|_{L^{s(\artgrwth)}(\Omega_T)}
            +
            \artvisc
            \|\nablax^2 \rho_n\|_{L^{s(\artgrwth)}(\Omega_T)}
            &\leq 
            C\left(\|\rho_0\|_{W^{1,\infty}(\Omega)} + \|\div(\rho_n\uvec_n)\|_{L^{s(\artgrwth)}(\Omega_T)}\right)
            \\
            &\leq 
            C\left(\constD,\artprs,\artvisc,\|\rho_0\|_{W^{1,\infty}(\Omega)}\right).
        \end{aligned}
        \end{equation*}
        The estimate for the last term on the right-hand side in \eqref{Lp ests II} follows from \eqref{energy bds(n) I} and \eqref{Lp ests I} by using the Sobolev embeddings $W^{1,2}(\Omega)\hookrightarrow L^6(\Omega)$ and $W^{1,r(\artgrwth)}(\Omega)\hookrightarrow L^\infty(\Omega)$ which holds thanks to $r(\artgrwth)>3$ (see~\eqref{values r,s}).
    \end{proof}
    \subsection{Approximation limit}
    In this subsection we perform the approximation limit $n\to\infty$ for the approximate sequence of solutions $\{(\rho_n,\uvec_n,c_n)\}_{n\in\NN}$ and complete the proof of Proposition~\ref{prop:wkSol rNSK(delta,eps)}.
    \begin{proof}[Proof of Proposition~\ref{prop:wkSol rNSK(delta,eps)}]
        Using the Sobolev embedding $W^{1,2}(\Omega)\hookrightarrow L^6(\Omega)$, H\"older's inequality, and the bounds in Lemmas~\ref{lem:unif-in-n bds I}, \ref{lem:unif-in-n bds II}, and \ref{lem:Lp ests}, we conclude with the Banach--Alaoglu theorem that, after passing to a non-relabeled subsequence, 
    \begin{equation}\label{weak cv(n)}
        \begin{aligned}
            &\rho_n\overset{\ast}{\weak} \rho\quad  \text{in } L^\infty(0,T;L^{\artgrwth}(\Omega)),
            \quad 
            \rho_n \weak \rho  \quad \text{in } L^{\frac{5}{3}\artgrwth}(\Omega_T)\cap L^{r(\artgrwth)}(0,T;W^{1,r(\artgrwth)}(\Omega)),
            \\
            &\uvec_n \weak \uvec\quad   \text{in } L^2(0,T;W^{1,2}_0(\Omega)),
            \quad 
            c_n\weak c \quad \text{in } L^2(0,T;W^{2,2}(\Omega)),
            \\
            &c_n\weakstar c \quad \text{in } L^\infty(0,T;W^{1,2}(\Omega)),
            \quad 
            \rho_n\uvec_n\weak \overline{\rho\uvec} \quad \text{in } L^2(0,T;L^{\frac{6\artgrwth}{\artgrwth+6}}(\Omega))\cap L^{r(\artgrwth)}(\OmegaT),
            \\
            &\rho_n\uvec_n \weakstar  \overline{\rho\uvec} \quad \text{in } L^\infty(0,T;L^{\frac{2\artgrwth}{\artgrwth+1}}(\Omega)),
            \quad 
            \rho_n\uvec_n\otimes\uvec_n \weak \overline{\rho\uvec\otimes\uvec} \quad \text{in } L^2(0,T;L^{\frac{3\artgrwth}{\artgrwth+3}}(\Omega)),
            \\
            &\rho_n\nablax c_n \weak \overline{\rho\nablax c} \quad \text{in } L^2(0,T;L^{\frac{6\artgrwth}{\artgrwth+6}}(\Omega)),
            \quad
            p_{\coup,\artprs}(\rho_n) \weak \overline{p_{\coup,\artprs}}\quad \text{in } L^{\frac{5}{3}}(\Omega_T),
            \\
            &\rho_n^{\frac{\artgrwth}{2}}\weak \overline{\rho^{\frac{\artgrwth}{2}}} \quad \text{in } L^2(0,T;W^{1,2}(\Omega)),
            \,\,\,\,
            \partial_t c_n \weak \partial_t c \quad \text{in } L^2(\OmegaT),
            \\
            &\partial_t \rho_n, \, \nablax^2 \rho_n, \, \nablax\rho_n\cdot\nablax \uvec_n, \, \div(\rho_n\uvec_n)
            \weak 
            \partial_t \rho, \, \nablax^2\rho, \, \overline{\nablax\rho \cdot \nablax \uvec}, \, \overline{\div(\rho\uvec)}
            \quad\text{in } L^{s(\artgrwth)}(\Omega_T),
        \end{aligned}
    \end{equation}
    with $\rho\geq0$ a.e. in $\OmegaT$.
    Here, the overlined quantities denote the weak limits of the corresponding sequences in the respective spaces.
    With classical results on Bochner spaces (see e.g.~\cite[Chapter~5]{EvansPDE2010}), we conclude from the regularity of $c$ further that $c \in C([0,T];W^{1,2}(\Omega))$.
    Using the compact Sobolev embeddings $W^{2,2}(\Omega)\hookrightarrow\hookrightarrow W^{1,2}(\Omega)$ and $W^{2,s(\artgrwth)}(\Omega)\hookrightarrow\hookrightarrow W^{1,s(\artgrwth)}(\Omega)$, we deduce from \eqref{energy bds(n) I}, \eqref{bds on W12 cn, L2 rhon}, \eqref{Lp ests I}, and \eqref{Lp ests II} by applying the Aubin--Lions theorem (see e.g.~\cite{Simon1987}) that
    \begin{equation}\label{str cv rho(n) c(n)}
        \rho_n \to \rho \quad \text{in } L^{s(\artgrwth)}(0,T;W^{1,s(\artgrwth)}(\Omega)),
        \quad 
        c_n \to c \quad \text{in } L^2(0,T;W^{1,2}(\Omega)).
    \end{equation}
    By virtue of \eqref{energy bds(n) I} and \eqref{Lp ests I}, we improve the first convergence in \eqref{str cv rho(n) c(n)} via interpolation to
    \begin{equation}\label{str cv rho(n)}
        \rho_n \to \rho \quad\text{in } L^{(\frac{5\artgrwth}{3})^-}(\Omega_T), 
        \quad 
        \nablax \rho_n \to \nablax \rho \quad \text{in } L^{r(\artgrwth)^-}(\Omega_T).
    \end{equation}
    Combining \eqref{weak cv(n)}--\eqref{str cv rho(n)} yields
    \begin{equation}\label{id limit(n) I}
    \begin{aligned}
        &\overline{\rho\uvec} = \rho\uvec,
        \quad 
        \overline{\rho\nablax c} = \rho\nablax c,
        \quad 
        \overline{\rho^{\frac{\artgrwth}{2}}} = \rho^{\frac{\artgrwth}{2}},
        \quad 
        \overline{p_{\coup,\artprs}}=p_{\coup,\artprs}(\rho),
        \\ 
        &\overline{\div(\rho\uvec)} = \div(\rho\uvec),
        \quad 
        \overline{\nablax \rho \cdot \nablax \uvec} = \nablax \rho \cdot \nablax \uvec
        \quad\text{a.e.~in } \OmegaT.
    \end{aligned}
    \end{equation}
    Due to \eqref{Gal approx cont+parab BC}, \eqref{weak cv(n)}, and ${\uvec_n}_{\mid\partial\Omega}=0$, we further have 
    \begin{equation*}
        \rho\uvec,\, \nablax \rho \in L^{s(\artgrwth)}(0,T;E^{r(\artgrwth),s(\artgrwth)}_0(\Omega)).
    \end{equation*}
    By \eqref{Gal approx mom}, we have for any $n \in \NN$ and any $\phivec\in C^\infty_c(\Omega;\RR^3)$ that
    \begin{equation}\label{mom eq P_n}
        \begin{aligned}
            &\frac{\dd}{\dd t}\int_\Omega 
            \proj (\rho_n\uvec_n) \cdot \phivec
            \, \dd x
            \\
            &=
            \int_\Omega 
            \rho_n\uvec_n\otimes \uvec_n :\nablax\proj\phivec
            +
            p_{\coup,\artprs}(\rho_n)\div\proj\phivec 
            -
            \artvisc \nablax\rho_n\cdot \nablax\uvec_n\cdot \proj\phivec
            \, \dd x
            \\
            &\qquad -\int_\Omega 
            \Svisc(\nablax\uvec_n):\nablax\proj\phivec - \coup\rho_n\nablax c_n \cdot \proj \phivec
            \, \dd x
            \qquad \text{a.e.~in } (0,T).
        \end{aligned}
    \end{equation}
    With \Cref{lem:unif-in-n bds I}, \Cref{lem:Lp ests}, and \eqref{prpties proj}, we conclude from \eqref{mom eq P_n} that $\{\partial_t \proj(\rho_n\uvec_n)\}_{n\in\NN}$ is uniformly bounded in $L^{s(\artgrwth)}(0,T;W^{-2,2}(\Omega))$, and further that $\{\proj(\rho_n\uvec_n)\}_{n\in\NN}$ is uniformly bounded in $L^{s(\artgrwth)}(0,T;W^{1,2}(\Omega))$.
    Using \eqref{weak cv(n)}, \eqref{id limit(n) I}, and the Aubin--Lions theorem, this leads to
    \begin{equation*}
        \proj(\rho_n\uvec_n) \to \rho\uvec \quad \text{in } L^{s(\artgrwth)}(0,T;L^2(\Omega)).
    \end{equation*}
    With \eqref{prpties proj} and \eqref{Lp ests II}, we deduce from the latter convergence that
    \begin{equation*}
    \begin{aligned}
        &\|\rho_n\uvec_n - \rho\uvec\|_{L^{s(\artgrwth)}(0,T;L^2(\Omega))}
        \\
        &\leq 
        \left\| \rho_n\uvec_n - \proj(\rho_n\uvec_n) \right\|_{L^{s(\artgrwth)}(0,T;L^2(\Omega))}
        +
        \left\| \proj (\rho_n\uvec_n) - \rho\uvec \right\|_{L^{s(\artgrwth)}(0,T;L^2(\Omega))}
        \to 0.
    \end{aligned}
    \end{equation*}
    In view of \eqref{Lp ests I}, this implies via interpolation that
    \begin{equation*}
        \rho_n \uvec_n \to \rho \uvec \quad \text{in } L^{r(\artgrwth)^-}(\Omega_T),
    \end{equation*}
    and, in particular, 
    \begin{equation}\label{id limit(n) II}
        \overline{\rho\uvec\otimes\uvec} = \rho\uvec\otimes\uvec \qquad \text{a.e.~in } \OmegaT.
    \end{equation}
    Next, for any $\psi\in C^\infty_c([0,T))$ and any $\phivec\in C^\infty_c(\Omega)$, we obtain by multiplying \eqref{mom eq P_n} with $\psi$ and integrating in time over $(0,T)$ that
    \begin{equation*}
        \begin{aligned}
            &-\psi(0)\int_\Omega \rho_0\uvec_{0n}\cdot \proj\phivec  \, \dd x
            -
            \int_0^T \partial_t\psi \int_\Omega
            \rho_n\uvec_n \cdot \proj\phivec \, \dd x \dd t
            \\
            &=
            \int_0^T \psi \int_\Omega 
            \rho_n\uvec_n\otimes \uvec_n :\nablax\proj\phivec
            +
            p_{\coup,\artprs}(\rho_n)\div\proj\phivec
            -
            \artvisc\nablax\rho_n\cdot\nablax\uvec_n \cdot \proj\phivec
            \, \dd x \, \dd t
            \\
            &\qquad-
            \int_0^T \psi \int_\Omega
            \Svisc(\nablax\uvec_n):\nablax\proj\phivec
            -
            \coup \rho_n\nablax c_n\cdot \proj \phivec
            \, \dd x \, \dd t.
        \end{aligned}
    \end{equation*}
    By virtue of \eqref{prpties proj}, \eqref{weak cv(n)}, \eqref{id limit(n) I}, \eqref{id limit(n) II}, and the fact that $\rho_0\uvec_{0n}\to (\rho\uvec)_0$ in $L^2(\Omega)$, we can pass to the limit $n \to \infty$ and obtain
    \begin{equation*}
        \begin{aligned}
            &-\psi(0)\int_\Omega (\rho\uvec)_0\cdot \phivec\, \dd x
            -
            \int_0^T \partial_t \psi \int_\Omega
            \rho\uvec \cdot \phivec\, \dd x \, \dd t
            \\
            &=
            \int_0^T \psi \int_\Omega 
            \rho\uvec\otimes\uvec : \nablax\phivec
            +
            p_{\coup,\artprs}(\rho) \div \phivec 
            -
            \artvisc\nablax\rho\cdot\nablax\uvec\cdot \phivec
            \, \dd x \, \dd t
            \\
            &\qquad -
            \int_0^T \psi \int_\Omega 
            \Svisc(\nablax\uvec):\nablax\phivec
            -
            \coup \rho\nablax c\cdot \phivec
            \, \dd x \, \dd t.
        \end{aligned}
    \end{equation*}
    This implies \eqref{wk sol mom(eps,delt)} and further, in view of \eqref{weak cv(n)},
    \begin{equation*}
        \rho\uvec \in \Cw([0,T];L^{\frac{2\artgrwth}{\artgrwth+1}}(\Omega)),
        \quad (\rho\uvec)(0) = (\rho\uvec)_0
        \quad \text{a.e.~in } \Omega.
    \end{equation*}
    Similarly, we obtain for any $\psi \in C^\infty_c([0,T))$ and any $\phi \in C^\infty_c(\RR^3)$ by multiplying each equation in \eqref{Gal approx cont+parab} with $\psi\phivec$, integrating over $\OmegaT$, and using \eqref{Gal approx cont+parab BC} and \eqref{Gal approx cont+parab IC} that
    \begin{equation*}
        \begin{aligned}
            &-\psi (0)\int_\Omega \rho_0 \phi\, \dx
            =
            \int_0^T \partial_t \psi \int_\Omega \rho_n\phi \, \dd x \, \dt
            +
            \int_0^T \psi \int_\Omega 
            \rho_n\uvec_n\cdot\nablax \phi 
            -
            \artvisc\nablax\rho_n\cdot\nablax\phi 
            \, \dx\, \dt,
            \\
            &-\psi(0)\int_\Omega\paracoup c_0\, \phi \, \dd x
            =
            \int_0^T \partial_t \psi \int_\Omega \paracoup c_n\phi\, \dx\, \dt
            -
            \int_0^T \psi\int_\Omega 
            \kappa\nablax c_n\cdot\nablax \phi 
            +
            \coup(c_n-\rho_n)\phi
            \, \dx \, \dt.
        \end{aligned}
    \end{equation*}
    In view of \eqref{weak cv(n)} and \eqref{id limit(n) I} we can pass to the limit $n\to\infty$ and obtain
    \begin{equation*}
        \begin{aligned}
            &-\psi (0)\int_\Omega \rho_0 \phi\, \dd x
            =
            \int_0^T \partial_t \psi \int_\Omega \rho \phi \, \dx \, \dt 
            +
            \int_0^T \psi \int_\Omega 
            \rho\uvec\cdot\nablax \phi 
            -
            \artvisc\nablax\rho\cdot\nablax\phi 
            \, \dd x\, \dd t,
            \\
            &-\psi(0)\int_\Omega\paracoup c_0\, \phi \, \dx
            =
            \int_0^T \partial_t \psi \int_\Omega \paracoup c \phi \, \dx \, \dt
            -
            \int_0^T \psi\int_\Omega 
            \kappa\nablax c\cdot\nablax \phi 
            +
            \coup(c-\rho)\phi
            \, \dx \, \dt.
        \end{aligned}
    \end{equation*}
    This implies \eqref{wk sol cont(eps,delt)}, \eqref{weak sol parab(eps,delt)}, and further, in view of \eqref{weak cv(n)} and since $c\in C([0,T];W^{1,2}(\Omega))$,
    \begin{equation*}
        \rho\in \Cw([0,T];L^{\artgrwth}(\Omega)),
        \quad 
        \rho(0)=\rho_0,
        \quad
        c(0)=c_0
        \quad \text{a.e.~in } \Omega.
    \end{equation*}
    For $\psi\in C^\infty_c([0,\infty))$ with $\psi \geq 0$, multiplying \eqref{Gal approx Energy} by $\psi$ and integrating over $(0,T)$ yields
    \begin{equation*}
        \begin{aligned}
            &-\int_0^T 
            E_{\mathrm{m},\artprs}[\rho_n,\rho_n\uvec_n,c_n]
            \partial_t \psi
            \, \dt
            \\
            &\quad +
            \int_0^T \psi \int_\Omega
            \Svisc(\nablax\uvec_n):\nablax\uvec_n
            +
            \paracoup|\partial_t c_n|^2
            +
            \artvisc \coup |\nablax \rho_n|^2
            +
            \artvisc\artprs\artgrwth\rho_n^{\artgrwth-2}\left|\nablax\rho_n\right|^2
            \, \dx\, \dt
            \\
            &\leq
            E_{\mathrm{m},\artprs}[\rho_0,\rho_0\uvec_{0n},c_n]\psi(0)
            +
            \int_0^T\psi\int_\Omega 
            q(\rho_n)\div\uvec_n 
            +
            \artvisc\coup\nablax\rho_n\cdot\nablax c_n
            \, \dx\, \dt.
        \end{aligned}
    \end{equation*}
    By virtue of \eqref{weak cv(n)}--\eqref{id limit(n) I}, \eqref{id limit(n) II}, the weak lower semi-continuity of convex functionals, and since we have by \eqref{prpties proj} that
    \begin{equation*}
        E_{\mathrm{m},\artprs}[\rho_0,\rho_0\uvec_{0n},c_0]
        \to 
        E_{\mathrm{m},\artprs}[\rho_0,(\rho\uvec)_0,c_0],
    \end{equation*}
    we obtain after passing to the limit $n\to\infty$ precisely \eqref{wkSol rNSK(eps,delt) energy}.
    The bounds in \eqref{wkSol rNSK(eps,delta) unif-bds-I} and \eqref{wkSol rNSK(eps,delta) unif-bds-II} follow from the weak convergences in \eqref{weak cv(n)} and the bounds in Lemmas~\ref{lem:unif-in-n bds I}, \ref{lem:unif-in-n bds II}, and \ref{lem:Lp ests} by using the weak lower semi-continuity of norms.
    The proof of Proposition~\ref{prop:wkSol rNSK(delta,eps)} is now complete.
    \end{proof}

%% file: VanArtVisc.tex
To prove \Cref{main result:ex wkSol}, we can now use Proposition~\ref{prop:wkSol rNSK(delta,eps)} to construct approximate solutions to the rNSKE \eqref{NSK(alpha,beta) system}--\eqref{NSK(alpha,beta) IC}.
    More specifically, let us assume that the hypotheses of \Cref{main result:ex wkSol} hold true and let $\artgrwth>\max\left\{15,\tgamma\right\}$.
    For $\artprs\in(0,1)$, we find by a density argument (see \cite[Chapter~7]{NovotnyStraskraba2004}) functions
    \begin{equation}\label{mollify I}
        \rho_{0,\artprs}\in C^\infty(\overline{\Omega}),
        \quad 
        (\rho\uvec)_{0,\artprs}\in C^\infty_c(\Omega;\RR^3),
        \quad 
        0<\artprs\leq \rho_{0,\artprs}\leq \artprs^{-\frac{1}{\artgrwth}}<\infty 
        \quad \text{on } \Omega
    \end{equation}
    with
    \begin{equation}\label{mollify II}
    \begin{aligned}
        &\rho_{0,\artprs}\to \rho_0\quad \text{in } L^{\tgamma}(\Omega),
        \quad 
        (\rho\uvec)_{0,\artprs}\to (\rho\uvec) \quad \text{in } L^{\frac{2\tgamma}{\tgamma+1}}(\Omega),
        \\
        &\frac{(\rho\uvec)_{0,\artprs}}{\sqrt{\rho_{0,\artprs}}} \to \frac{(\rho\uvec)_0}{\sqrt{\rho_0}} \quad \text{in } L^2(\Omega),
        \quad 
        \artprs^{\frac{1}{\artgrwth}}\rho_{0,\artprs} \to 0  \quad \text{in } L^{\artgrwth}(\Omega),
    \end{aligned}
    \end{equation}
    and
    \begin{equation}\label{mollify III}
        \|\rho_{0,\artprs}\|_{L^{\tgamma}(\Omega)}
        +
        \bigg\|\frac{|(\rho\uvec)_{0,\artprs}|^2}{\rho_{0,\artprs}}\bigg\|_{L^1(\Omega)}
        +
        \artprs^{\frac{1}{\artgrwth}}\|\rho_{0,\artprs}\|_{L^{\artgrwth}(\Omega)}
        \leq 
        \Cdata,
    \end{equation}
    where $\data$ is defined in \eqref{data}.
    Then, by Proposition~\ref{prop:wkSol rNSK(delta,eps)} and \eqref{mollify III}, there exists for $\artprs,\artvisc\in (0,1)$ a triplet $(\rhoepsdelt,\uvecepsdelt,\cepsdelt)$ that satisfies \eqref{reg wksol rNSK(delta,eps)}--\eqref{wkSol rNSK(eps,delta) unif-bds-II} with $\rho_0$, $(\rho\uvec)_0$, and $\constD$ replaced by  $\rho_{0,\artprs}$, $(\rho\uvec)_{0,\artprs}$, and $\data$, respectively.
    To prove \Cref{main result:ex wkSol}, we wish to perform the vanishing artificial viscosity limit $\artvisc\to 0$ and afterwards the vanishing artificial pressure limit $\artprs\to 0$.
    In this section, we perform the artificial viscosity limit and prove as an outcome the following statement:
    \begin{proposition}\label{prop:wkSol rNSK(delta)}
        Let the hypotheses and notations of \Cref{main result:ex wkSol} hold true and suppose that $\artgrwth>\max\left\{15,\tgamma\right\}$.
        Assume for $\artprs\in (0,1)$ that regularized initial conditions $\rho_{0,\artprs}$ and $(\rho\uvec)_{0,\artprs}$ satisfying \eqref{mollify I}--\eqref{mollify III} are given.
        Then there exists a triplet $(\rhod,\uvecd,\cd)$ such that
        \begin{enumerate}
            \item we have the regularity
            \begin{equation}\label{wk sol rNSK(delt) regularity}
                \begin{aligned}
                    &\rhod \in \Cw([0,T];L^\artgrwth(\Omega))\cap L^{\artgrwth+1}(\OmegaT),
                    \quad 
                    \rhod \geq 0 \quad \text{a.e.},
                    \\
                    &\uvecd \in L^2(0,T;W^{1,2}_0(\Omega;\RR^3)),
                    \quad 
                    \rhod\uvecd\in \Cw([0,T];L^{\frac{2\artgrwth}{\artgrwth+1}}(\Omega;\RR^3)),
                    \\
                    &\cd \in C([0,T];W^{1,2}(\Omega))\cap L^2(0,T;W^{2,2}(\Omega)),
                    \quad 
                    \partial_t \cd \in L^2(\OmegaT);
                \end{aligned}
            \end{equation}
            \item the rNSKE \eqref{NSK(alpha,beta) system}--\eqref{NSK(alpha,beta) IC} with the artificial pressure $p_{\coup,\artprs}$ (see \eqref{def p_coup,delta}) and regularized initial conditions $(\rho_{0,\artprs},(\rho\uvec)_{0,\artprs},c_0)$ are satisfied in the weak sense, that is,
            \begin{align}
            &\int_0^T \int_\Omega 
            \rhod 
            \partial_t \varphi
            +
            \rhod\uvecd \cdot \nablax \varphi
            \, \dd x \, \dd t
            =
            0,
            \quad \forall\, \varphi \in C^\infty_c(\RR_T^3),\label{wk sol rNSK cont(delt)}
            \\
            &\int_0^T\int_\Omega 
            \rhod\uvecd \cdot \partial_t\varphivec
            +
            \rhod\uvecd\otimes\uvecd:\nablax\varphivec
            +
            p_{\coup,\artprs}(\rhod) \div\varphivec
            \, \dd x \, \dd t\nonumber
            \\
            &\qquad=
            \int_0^T \int_\Omega 
            \Svisc(\nablax\uvecd):\nablax\varphivec -\coup \rhod \nablax \cd \cdot \varphivec
            \, \dd x \, \dd t 
            \quad \forall \, \varphivec\in C^\infty_c(\OmegaT;\RR^3),\label{wk sol rNSK mom(delt)}
            \\
            &\int_0^T \int_\Omega 
            \paracoup \cd\partial_t \varphi 
            -
            \kappa \nablax \cd\cdot \nablax \varphi 
            -
            \coup(\cd-\rhod) \varphi
            \, \dd x \, \dd t
            = 0  \quad\forall\, \varphi \in C^\infty_c(\RR_T^3),\label{wk sol rNSK parab(delt)}
            \end{align}
            and
            \begin{equation}\label{wk sol rNSK IC(delt)}
                \rhod(0)=\rho_{0,\artprs},
                \quad 
                \rhod\uvecd(0)=(\rho\uvec)_{0,\artprs},
                \quad 
                \cd(0)=c_{0} \quad \text{a.e.~in } \Omega;
            \end{equation}
            \item we have the bounds
            \begin{equation}\label{wk sol rNSK bounds(delt)}
                \begin{aligned}
                    &\left\| \sqrt{\rhod}\uvecd \right\|_{L^\infty(0,T;L^2(\Omega))}
                    +
                    \left\| \rhod \right\|_{L^\infty(0,T;L^\tgamma(\Omega))}
                    +
                    \left\| \cd \right\|_{L^\infty(0,T;W^{1,2}(\Omega))}
                    \\
                    &\quad 
                    +
                    \left\| \cd \right\|_{L^2(0,T;W^{2,2}(\Omega))}
                    +
                    \left\| \partial_t \cd \right\|_{L^2(\OmegaT)}
                    +
                    \artprs^{\frac{1}{\artgrwth}}\|\rhod\|_{L^\infty(0,T;L^\artgrwth(\Omega))}
                    \leq \Cdata,
                \end{aligned}
            \end{equation}
            where $\data$ is defined in $\eqref{data}$;
            \item the energy inequality corresponding to the rNSKE \eqref{NSK(alpha,beta) system}--\eqref{NSK(alpha,beta) IC} with artificial pressure $p_{\coup,\artprs}$ holds in the weak sense, that is,
            \begin{equation}\label{wk sol rNSK energy(delt)}
                \begin{aligned}
                    &-\int_0^T  E_\artprs[\rhod,\rhod\uvecd,\cd]\partial_t \psi \, \dt 
                    +
                    \int_0^T \psi \int_\Omega 
                    \Svisc(\nablax\uvecd):\nablax\uvecd + \paracoup|\partial_t \cd|^2
                    \, \dx \, \dt 
                    \\
                    &\leq 
                    E_{\artprs}[\rho_{0,\artprs},(\rho\uvec)_{0,\artprs},c_0]\psi(0)
                \end{aligned}
            \end{equation}
            for any $\psi \in C_c^\infty([0,T))$ with $\psi\geq 0$, where
            \begin{equation*}
                E_{\artprs}[\rhod,\rhod\uvecd,\cd]
                :=
                \int_\Omega 
                \frac{|\rhod\uvecd|^2}{2\rhod}
                +
                W(\rhod)
                +
                \frac{\artprs}{\artgrwth-1}\rhod^{\artgrwth}
                +
                \frac{\coup}{2} |\rhod-\cd|^2
                +
                \frac{\kappa}{2}|\nablax \cd|^2 
                \, \dx 
            \end{equation*}
            with $W$ denoting the pressure potential corresponding to $p$ defined in \eqref{def press pot H Q W}.
        \end{enumerate}
    \end{proposition}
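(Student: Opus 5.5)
We pass to the limit $\artvisc\to0$ in the triplets $(\rhoepsdelt,\uvecepsdelt,\cepsdelt)$ given by Proposition~\ref{prop:wkSol rNSK(delta,eps)} with initial data $(\rho_{0,\artprs},(\rho\uvec)_{0,\artprs},c_0)$, following \cite{FeireislNovotnyPetzeltova2001}.

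\emph{Step 1: bounds and weak limits.} The bounds \eqref{wkSol rNSK(eps,delta) unif-bds-I} with $\constD=\Cdata$ are uniform in $\artvisc$. They give, along a subsequence, weak(-$\ast$) limits $\rhod,\uvecd,\cd$ in the corresponding spaces. The constraint $\artgrwth>\max\{15,\tgamma\}$ is what makes $\gamma$-range restrictions irrelevant here. Moreover $\sqrt{\artvisc}\|\nablax\rhoepsdelt\|_{L^2(\OmegaT)}\le C$, so $\artvisc\nablax\rhoepsdelt\to0$ in $L^2(\OmegaT)$. The remaining term satisfies
\[
\artvisc\nablax\rhoepsdelt\cdot\nablax\uvecepsdelt=\sqrt{\artvisc}\,\bigl(\sqrt{\artvisc}\nablax\rhoepsdelt\bigr)\cdot\nablax\uvecepsdelt\to0\quad\text{in }L^1(\OmegaT).
\]

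\emph{Step 2: uniform pressure integrability.} Bounds on $p_{\coup,\artprs}$ are not yet uniform in $\artvisc$. I would test the momentum equation with $\psi(t)\Bog[\rhoepsdelt-\fint\rhoepsdelt]$, where $\Bog$ is the Bogovskii operator. This is legitimate thanks to the $L^{s(\artgrwth)}$-regularity of $\partial_t\rhoepsdelt$. It yields $\|\rhoepsdelt\|_{L^{\artgrwth+1}(\OmegaT)}\le C(\data,\artprs)$, so $p_{\coup,\artprs}(\rhoepsdelt)$ converges weakly in $L^{(\artgrwth+1)/\artgrwth}$ to some $\overline{p_{\coup,\artprs}}$.

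\emph{Step 3: compactness and the limit equations.} Aubin--Lions applied to the continuity equation gives $\rhoepsdelt\to\rhod$ in $\Cw([0,T];L^\artgrwth)$, hence in $C([0,T];W^{-1,2})$, so $\rhoepsdelt\uvecepsdelt\weak\rhod\uvecd$. Likewise, control of $\partial_t(\rhoepsdelt\uvecepsdelt)$ in a negative Sobolev space gives $\rhoepsdelt\uvecepsdelt\otimes\uvecepsdelt\weak\rhod\uvecd\otimes\uvecd$. For $c$, the bounds on $\partial_t\cepsdelt$ and $\nablax^2\cepsdelt$ give $\cepsdelt\to\cd$ strongly in $L^2(0,T;W^{1,2})$, so $\rhoepsdelt\nablax\cepsdelt\weak\rhod\nablax\cd$. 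This yields \eqref{wk sol rNSK cont(delt)}--\eqref{wk sol rNSK IC(delt)} with $\overline{p_{\coup,\artprs}}$ in place of $p_{\coup,\artprs}(\rhod)$. The weak formulations of the limit continuity and parabolic equations then follow directly.

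\emph{Step 4: strong convergence of the density (main obstacle).} Since $\artvisc\nablax\rhoepsdelt\to0$ in $L^2$, the limit $(\rhod,\uvecd)$ is a renormalized solution (DiPerna--Lions, as $\rhod\in L^2$). I would apply the effective viscous flux lemma of Appendix~\ref{Sec:Appendix EVF} to obtain
\[
\overline{p_{\coup,\artprs}(\rho)T_k(\rho)}-\overline{p_{\coup,\artprs}}\,\overline{T_k(\rho)}=(\tfrac43\shearvisc+\bulkvisc)\bigl(\overline{T_k(\rho)\div\uvec}-\overline{T_k(\rho)}\div\uvecd\bigr).
\]
Here the extra term $\coup\rho\nablax c$ is compact because $\cepsdelt$ converges strongly. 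The difficulty is that $p_{\coup,\artprs}$ need not be monotone. I would split $p_{\coup,\artprs}=(h+\tfrac\coup2 r^2+\artprs r^\artgrwth)+q$, whose first part is increasing, and treat the compactly supported $q$ as a Lipschitz perturbation, following \cite{Feireisl2002}. Since $\artgrwth$ is large, we can work directly with $\rho\log\rho$: comparing $\overline{\rho\log\rho}$ and $\rhod\log\rhod$ through the renormalized equations, the monotone part has a favorable sign. The Lipschitz term is bounded by $C\int\bigl(\overline{\rho\log\rho}-\rhod\log\rhod\bigr)$, so Gr\"onwall gives $\overline{\rho\log\rho}=\rhod\log\rhod$. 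This yields strong convergence of $\rhoepsdelt$ and $\overline{p_{\coup,\artprs}}=p_{\coup,\artprs}(\rhod)$.

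\emph{Step 5: energy inequality.} Finally, pass to the limit in \eqref{wkSol rNSK(eps,delt) energy}. The term $\artvisc\coup\nablax\rho\cdot\nablax c$ is absorbed by half of $\artvisc\coup|\nablax\rho|^2$ plus $\tfrac{\artvisc\coup}{2}|\nablax c|^2\to0$; since these are dissipative terms, they may then be dropped. Strong convergence of $\rhoepsdelt$ gives $\int q(\rhoepsdelt)\div\uvecepsdelt\to\int q(\rhod)\div\uvecd=-\frac{\dd}{\dd t}\int Q(\rhod)$ by the renormalized equation. This converts $H$ into $W$. Weak lower semicontinuity of the convex terms then gives \eqref{wk sol rNSK energy(delt)}, and the bounds \eqref{wk sol rNSK bounds(delt)} follow.
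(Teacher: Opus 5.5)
Your outline follows the paper's proof: a Bogovski\u{\i} multiplier giving $\rhoepsdelt$ bounded in $L^{\artgrwth+1}(\OmegaT)$, weak limits, the effective viscous flux lemma, the $\Lambda$-convexity trick of \cite{Feireisl2002} for the compactly supported $q$ with Gr\"onwall on $\overline{\rhod\log\rhod}-\rhod\log\rhod$, and the renormalized equation for $Q(\rhod)$ to pass from $H$ to $W$. However, Step~4 as written does not go through. At the $\artvisc$-level you cannot obtain the displayed identity with $T_k$ from Proposition~\ref{prop:EVF(general)}, because
\begin{equation*}
\partial_t T_k(\rhoepsdelt)+\div\bigl(T_k(\rhoepsdelt)\uvecepsdelt\bigr)+\bigl(T_k'(\rhoepsdelt)\rhoepsdelt-T_k(\rhoepsdelt)\bigr)\div\uvecepsdelt
=\artvisc\Deltax T_k(\rhoepsdelt)-\artvisc T_k''(\rhoepsdelt)|\nablax\rhoepsdelt|^2 .
\end{equation*}
The last term is merely bounded in $L^1(\OmegaT)$ and leaves a nonnegative defect measure in the limit. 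So it meets neither the hypotheses on $f_n$ nor those on $h_n$.

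Cut-offs belong to the $\artprs\to0$ limit. Here, take $g_n=\rhoepsdelt$, $f_n=\artvisc\Deltax\rhoepsdelt$, $h_n=0$. This is possible precisely because of the $L^{\artgrwth+1}$ bound from your Step~2, which is the exponent $w=r'$ the lemma needs when $p_n=p_{\coup,\artprs}(\rhoepsdelt)\in L^{\frac{\artgrwth+1}{\artgrwth}}$. The hypotheses on $f_n$ require $\artvisc\nablax\rhoepsdelt\to0$ in $L^2(0,T;L^{\frac{2\artgrwth}{\artgrwth-1}}(\Omega))$, not just in $L^2(\OmegaT)$. This follows by interpolating your $L^2$ convergence with the $\artvisc$-uniform bound on $\artvisc\nablax\rhoepsdelt$ in $L^{r(\artgrwth)}(\OmegaT)$. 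You then get the identity with $T_k(\rho)$ replaced by $\rho$, which is what ``working directly with $\rho\log\rho$'' actually requires.

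Likewise, $\rhoepsdelt\log\rhoepsdelt$ satisfies no renormalized equation. What is available is the inequality
\begin{equation*}
\int_\Omega \rhoepsdelt\log\rhoepsdelt(\tau)-\rho_{0,\artprs}\log\rho_{0,\artprs}\,\dx\le-\int_0^\tau\!\!\int_\Omega\rhoepsdelt\div\uvecepsdelt\,\dx\,\dt .
\end{equation*}
It is obtained by approximating $r\mapsto r\log r$ by smooth convex $b$ and dropping $-\artvisc\int_\Omega b''(\rhoepsdelt)|\nablax\rhoepsdelt|^2\,\dx\le0$. Equality holds only for the limit $\rhod$, via DiPerna--Lions, and the inequality has exactly the sign needed for the comparison.

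A smaller point concerns Step~2. The $L^{s(\artgrwth)}$-regularity of $\partial_t\rhoepsdelt$ only justifies the test function; it is not $\artvisc$-uniform. For the estimate, write $\partial_t\Bog\bigl[\rhoepsdelt-\fint_\Omega\rho_{0,\artprs}\bigr]=\Bog\bigl[\div(\artvisc\nablax\rhoepsdelt-\rhoepsdelt\uvecepsdelt)\bigr]$. Then use the extended operator of Lemma~\ref{lem:Bog} together with \eqref{wkSol rNSK(eps,delta) unif-bds-II}. With these repairs your argument coincides with the paper's.
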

    \subsection{Uniform estimates and weak limit}
    To perform for fixed $\artprs>0$ the vanishing viscosity limit $\artvisc\to 0$ for the approximate sequence of solutions $\{(\rhoepsdelt,\uvecepsdelt,\cepsdelt)\}_{\artvisc >0}$, we need bounds that are uniform with respect to $\artvisc$.
    The bounds in \eqref{wkSol rNSK(eps,delta) unif-bds-I} and \eqref{wkSol rNSK(eps,delta) unif-bds-II} do not provide equi-integrability for the sequence $\{p_{\coup,\artprs}(\rhoepsdelt)\}_{\artvisc>0}$.
    Thus, we have to obtain some improved uniform bound for the density in order to pass to the limit $\artvisc\to 0$ in the pressure term. 
    To obtain such a bound, we adapt the technique that is known from the compressible NSE (see e.g.~\cite{FeireislPetzeltova2000,FeireislNovotnyPetzeltova2001}) and test the momentum equation by a suitable multiplier that is constructed by using the Bogovski\u{\i} operator.
    For the sake of completeness we recall the relevant properties of this operator proven in \cite[Theorem~11.17]{FeireislNovotny2017singlim}.
    \begin{lemma}\label{lem:Bog}
        Let $\Omega \subseteq \RR^3$ be a bounded domain with $\partial\Omega \in C^{0,1}$ and let $s \in (1,\infty)$.
        Then there exists a bounded linear operator $\Bog \colon L^s_0(\Omega) \to W^{1,s}_0(\Omega;\RR^3)$ such that for any $f \in L^s_0(\Omega)$,
        \begin{equation*}
            \div\Bog(f) = f \quad \text{a.e.~in } \Omega,
            \quad 
            \big\|\Bog(f)\big\|_{W^{1,s}_0(\Omega)} \leq C\|f\|_{L^s(\Omega)}.
        \end{equation*}
        Moreover, for any $s \in (1,\infty)$, this operator can be extended to a linear operator 
        \begin{equation*}
            \Bog\colon \big[\dot{W}^{1,s^\prime}(\Omega)\big]^\ast 
            := \big\{ f \in \big[W^{1,s^\prime}(\Omega)\big]^\ast \mid \langle f , 1 \rangle = 0 \big\} \to L^s(\Omega;\RR^3)
        \end{equation*}
        such that for any $f \in \big[\dot{W}^{1,s^\prime}(\Omega)\big]^\ast$,
        \begin{equation*}
            -\int_\Omega \Bog(f) \cdot \nablax \phi \, \dd x 
            =
            \langle f , \phi \rangle \quad \forall \, \phi \in W^{1,s^\prime}(\Omega),
            \quad 
            \|\Bog(f)\|_{L^s(\Omega)} \leq C \|f\|_{[W^{1,s^\prime}(\Omega)]^\ast}.
        \end{equation*}
    \end{lemma}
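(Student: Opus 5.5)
The plan is to build $\Bog$ in three stages. First I would construct it on domains that are star-shaped with respect to a ball, using Bogovski\u{\i}'s explicit formula. Then I would pass to a general Lipschitz domain by a decomposition argument. Finally I would extend $\Bog$ to the negative-order space by duality and density. Since the paper quotes the result from \cite[Theorem~11.17]{FeireislNovotny2017singlim}, this only outlines the classical argument.

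\emph{Step 1 (star-shaped case).} Let $D$ be star-shaped with respect to a ball $B\Subset D$, and fix $\omega\in C^\infty_c(B)$ with $\int\omega=1$. For $f\in C^\infty_c(D)$ with $\int_D f=0$ I set
\begin{equation*}
    \Bog_D(f)(x):=\int_D f(y)\,\frac{x-y}{|x-y|^3}\int_{|x-y|}^\infty \omega\Big(y+r\tfrac{x-y}{|x-y|}\Big) r^2\,\dd r\,\dd y .
\end{equation*}
A direct computation gives $\Bog_D(f)\in C^\infty_c(D)$ and $\div\Bog_D(f)=f-\omega\int f=f$. Differentiating under the integral, $\nablax\Bog_D(f)$ splits into two parts. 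One is a singular integral whose kernel is homogeneous of degree $-3$ and satisfies the cancellation condition. The other is a weakly singular integral plus a multiple of $f$. The Calder\'on--Zygmund theorem then gives $\|\nablax\Bog_D f\|_{L^s}\le C\|f\|_{L^s}$ for $s\in(1,\infty)$. By density this extends to all of $L^s_0(D)$, with values in $W^{1,s}_0$.

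\emph{Step 2 (Lipschitz case).} A bounded Lipschitz domain is a finite union $\Omega=\bigcup_{k=1}^N D_k$, where each $D_k$ is star-shaped with respect to a ball. Given $f\in L^s_0(\Omega)$, I would decompose $f=\sum_k f_k$ with $f_k\in L^s_0(D_k)$ and $\|f_k\|_{L^s}\le C\|f\|_{L^s}$. This is done inductively via a partition of unity, correcting the means on the overlaps $D_k\cap\bigcup_{j>k}D_j$, which have positive measure after reordering. I then set $\Bog(f):=\sum_k\Bog_{D_k}(f_k)$, extended by zero. This map is linear and bounded, and satisfies $\div \Bog(f) = f$.

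\emph{Step 3 (extension to $[\dot W^{1,s'}(\Omega)]^\ast$).} This is the main obstacle. The required estimate is $\|\Bog(f)\|_{L^s}\le C\|f\|_{[W^{1,s'}]^\ast}$ for smooth $f$ with zero mean. For $f=\div\mathbf g$ with $\mathbf g\in C^\infty_c$, I would rewrite the kernel representation by integrating by parts in $y$. This gives $\Bog_D(\div\mathbf g)$ as a Calder\'on--Zygmund operator applied to $\mathbf g$ plus a bounded remainder, so $\|\Bog(\div\mathbf g)\|_{L^s}\le C\|\mathbf g\|_{L^s}$. A general $f$ in the dual space with $\langle f,1\rangle=0$ can be represented, by Hahn--Banach applied to the embedding $\phi\mapsto\nablax\phi$ of $W^{1,s'}$ modulo constants into $L^{s'}$, as $\langle f,\phi\rangle=-\int\mathbf g\cdot\nablax\phi$ with $\|\mathbf g\|_{L^s}\le C\|f\|$. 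I then approximate $\mathbf g$ by smooth compactly supported fields, keeping the decomposition of Step 2 compatible with divergence-form data. This defines $\Bog(f)$ as the $L^s$-limit. The identity $-\int\Bog(f)\cdot\nablax\phi=\langle f,\phi\rangle$ holds for smooth $f$ and passes to the limit. The technical care lies in ensuring that the construction does not depend on the choice of representing field $\mathbf g$; this follows from linearity and the estimate on smooth data.
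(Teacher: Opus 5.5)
Your Steps 1 and 2 are the classical Bogovski\u{\i} construction behind \cite[Theorem~11.17]{FeireislNovotny2017singlim}, which the paper simply quotes. They are fine.

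The gap is in Step 3, at exactly the point you flag and then dismiss. Let $\bar T$ be the continuous extension to $L^s(\Omega;\RR^3)$ of $\mathbf g\mapsto \Bog(\div\mathbf g)$, $\mathbf g\in C^\infty_c(\Omega;\RR^3)$. Two fields represent the same functional if and only if their difference lies in
\begin{equation*}
    N:=\Big\{\mathbf h\in L^s(\Omega;\RR^3) : \int_\Omega \mathbf h\cdot\nablax\phi\,\dx=0 \ \ \forall\,\phi\in W^{1,s'}(\Omega)\Big\}.
\end{equation*}
So you need $\bar T=0$ on $N$.

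Linearity and the bound on smooth data only give $\bar T=0$ on the $L^s$-closure of the smooth, compactly supported, divergence-free fields, since for those $\bar T\mathbf h=\Bog(0)=0$. For a general $\mathbf h\in N$ you only get $\int_\Omega\bar T\mathbf h\cdot\nablax\phi\,\dx=0$, i.e.\ $\bar T\mathbf h\in N$, not $\bar T\mathbf h=0$. Identifying that closure with $N$ is a genuine theorem: it is the characterization of $L^s_\sigma(\Omega)$, dual to the de Rham-type statement that every $\mathbf F\in L^{s'}$ annihilating smooth compactly supported solenoidal fields equals $\nablax p$ with $p\in W^{1,s'}$. It does not follow from what you have.

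Without it, your $\Bog$ is neither well defined nor linear. It is also unclear that it extends the $L^s_0$ operator, because a Hahn--Banach representative of $f\in L^s_0$ differs from $\Bog f$ by an element of $N$. You cannot fall back on density either. Even for smooth zero-mean $f$, the bound $\|\Bog f\|_{L^s}\le C\|f\|_{[W^{1,s'}]^\ast}$ is exactly what is at stake. The norm-controlled representative produced by Hahn--Banach is neither smooth nor compactly supported, so relating it to $\Bog f$ is the same problem.

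The standard remedy is duality, which avoids representatives entirely. On a star-shaped piece $D$, consider the adjoint $\Bog_D^\ast\mathbf v(y)=\int_D K(x,y)\cdot\mathbf v(x)\,\dx$ of the Bogovski\u{\i} kernel $K$. Its $y$-gradient is again a Calder\'on--Zygmund operator plus weakly singular terms, so $\|\Bog_D^\ast\mathbf v\|_{W^{1,s'}(D)}\le C\|\mathbf v\|_{L^{s'}(D)}$; no boundary condition on $\Bog_D^\ast\mathbf v$ is needed. With smooth cut-offs in the decomposition of Step 2, this carries over to $\Omega$. Then, for smooth zero-mean $f$,
\begin{equation*}
    \int_\Omega \Bog f\cdot\mathbf v\,\dx=\int_\Omega f\,\Bog^\ast\mathbf v\,\dx\le C\|f\|_{[W^{1,s'}(\Omega)]^\ast}\|\mathbf v\|_{L^{s'}(\Omega)} .
\end{equation*}
Smooth zero-mean functions are dense in $[\dot W^{1,s'}(\Omega)]^\ast$ by reflexivity of $W^{1,s'}$. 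Hence $\Bog$ extends uniquely by continuity, and $-\int_\Omega\Bog f\cdot\nablax\phi\,\dx=\langle f,\phi\rangle$ passes to the limit from $\Bog f_n\in W^{1,s}_0$. Your representation route can also be repaired, for instance by writing $\bar T\mathbf h$ as a limit of integrals against gradients of truncated kernels. That, however, requires convergence of truncated singular integrals for general $L^s$ data, which is more than linearity and the smooth-data estimate.
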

    With the operator $\Bog$ at hand, we deduce an improved estimate for the density:
    \begin{lemma}\label{lem:impr bd(delt)}
        Let the hypotheses and notations of Proposition~\ref{prop:wkSol rNSK(delta)} hold true.
        For $\artprs,\artvisc\in (0,1)$, let $(\rhoepsdelt,\uvecepsdelt,\cepsdelt)$ be a triplet satisfying \eqref{reg wksol rNSK(delta,eps)}--\eqref{wkSol rNSK(eps,delta) unif-bds-II} with $\rho_0$, $(\rho\uvec)_0$, and $\constD$ replaced by $\rho_{0,\artprs}$, $(\rho\uvec)_{0,\artprs}$, and $\data$, respectively. Then we have
        \begin{equation}\label{lem:impr bd(delt) I}
            \int_0^T \int_\Omega p_{\coup,\artprs}(\rhoepsdelt)\rhoepsdelt\, \dx \, \dt 
            \leq 
            C\left(\data,\artprs\right).
        \end{equation}
    \end{lemma}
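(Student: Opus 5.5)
We will follow the classical Bogovski\u{\i} multiplier technique of \cite{FeireislNovotnyPetzeltova2001}, adapted to the extra terms of the regularized rNSKE. For brevity we write $\rho=\rhoepsdelt$, $\uvec=\uvecepsdelt$, $c=\cepsdelt$. We test the momentum equation \eqref{wk sol mom(eps,delt)} with
\begin{equation*}
    \varphivec(t,x):=\psi(t)\,\Bog\Big(\rho(t)-\fint_\Omega\rho(t)\,\dx\Big), \qquad \psi\in C^\infty_c((0,T)),\ 0\le\psi\le 1,
\end{equation*}
and afterwards let $\psi\to 1_{[0,T]}$. By the regularity \eqref{reg wksol rNSK(delta,eps)} and \Cref{lem:Bog}, the field $\varphivec$ is an admissible test function after a density argument. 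Indeed, $\rho\in L^\infty(0,T;L^\artgrwth)$ gives $\Bog(\rho-\fint\rho)\in L^\infty(0,T;W^{1,\artgrwth}_0)$, and $\partial_t\rho\in L^{s(\artgrwth)}(\OmegaT)$ gives the time derivative. Note that the mean of $\rho$ is conserved, since the Neumann condition preserves mass.

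Because $\div\varphivec=\psi(\rho-\fint\rho)$, the pressure term produces the desired quantity $\int\psi\, p_{\coup,\artprs}(\rho)\rho$ minus $\int\psi\, p_{\coup,\artprs}(\rho)\fint\rho$. The subtracted part is bounded by $C\|p_{\coup,\artprs}(\rho)\|_{L^1(\OmegaT)}\le C(\data,\artprs)$, since $\artprs\rho^\artgrwth$ is controlled in $L^\infty(0,T;L^1)$ by \eqref{wkSol rNSK(eps,delta) unif-bds-I}. Here $p_\coup$ may be negative only through $q$, which is bounded.

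All remaining terms must be bounded by $C(\data,\artprs)$ uniformly in $\artvisc$:
\begin{enumerate}
\item The viscous term is bounded by $\|\uvec\|_{L^2W^{1,2}}\|\rho\|_{L^2L^2}$.
\item The convective term is controlled by $\|\rho|\uvec|^2\|_{L^1(0,T;L^{3\artgrwth/(\artgrwth+3)})}\,\|\Bog(\cdot)\|_{L^\infty(0,T;W^{1,\artgrwth})}$. The first factor is bounded using $\sqrt\rho\uvec\in L^\infty L^2$, $\uvec\in L^2L^6$ and $\rho\in L^\infty L^\artgrwth$.
\item The coupling term $\coup\rho\nablax c\cdot\varphivec$ is bounded by $\|\rho\|_{L^\infty L^2}\|\nablax c\|_{L^2L^6}\|\varphivec\|_{L^2 L^3}$, using $c\in L^2W^{2,2}$.
\item The term $\artvisc\nablax\rho\cdot\nablax\uvec\cdot\varphivec$ is handled with $\sqrt\artvisc\|\nablax\rho\|_{L^2}\le C$ and $\varphivec\in L^\infty L^\infty$, the latter following from $W^{1,\artgrwth}\hookrightarrow L^\infty$ since $\artgrwth>3$. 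This gives a bound of order $\sqrt\artvisc\,\|\nablax\uvec\|_{L^2}$.
\end{enumerate}

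The time derivative term is the main obstacle, as in the Navier--Stokes case. Integrating by parts in time gives $-\int\rho\uvec\cdot\psi\,\Bog(\partial_t\rho)$ plus a $\partial_t\psi$ term. The $\partial_t\psi$ term is harmless, being bounded by $\|\rho\uvec\|_{L^\infty L^{2\artgrwth/(\artgrwth+1)}}\|\Bog\|_{L^\infty L^{\artgrwth'}}$; with a careful choice of $\psi$ it reduces to boundary-in-time terms bounded by the same quantities. For the other term we use the continuity equation $\partial_t\rho=-\div(\rho\uvec-\artvisc\nablax\rho)$. Both $\rho\uvec$ and $\nablax\rho$ lie in $E_0^{r,s}$ with zero normal trace, so the extended Bogovski\u{\i} operator of \Cref{lem:Bog} gives
\begin{equation*}
    \|\Bog(\partial_t\rho)\|_{L^p}\le C\big(\|\rho\uvec\|_{L^p}+\artvisc\|\nablax\rho\|_{L^p}\big).
\end{equation*}
With $p=2$ this yields the estimate
\begin{equation*}
    \Big|\int\rho\uvec\cdot\Bog(\partial_t\rho)\Big|\le \|\rho\uvec\|_{L^2(\OmegaT)}^2+\artvisc\|\rho\uvec\|_{L^2(\OmegaT)}\|\nablax\rho\|_{L^2(\OmegaT)},
\end{equation*}
where $\|\rho\uvec\|_{L^2L^2}\le\|\rho\|_{L^\infty L^3}\|\uvec\|_{L^2L^6}\le C(\data,\artprs)$ and $\artvisc\|\nablax\rho\|_{L^2}\le\sqrt\artvisc\,C$.

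The delicate point is to check that every norm used comes from the $\artvisc$-uniform bounds \eqref{wkSol rNSK(eps,delta) unif-bds-I}, and not from the $\artvisc$-dependent \eqref{Lp ests II}, which is needed only to justify the test function. Collecting all terms then gives \eqref{lem:impr bd(delt) I}.
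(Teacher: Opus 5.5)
Your proposal is correct and follows the paper's proof: you test the momentum equation with the same Bogovski\u{\i} multiplier $\psi\,\Bog(\rho-\fint_\Omega\rho)$, identify $\partial_t\varphivec$ through the continuity equation and the extended operator of \Cref{lem:Bog}, bound the remaining terms, and then let $\psi\to 1$. The only harmless deviation is that you bound $\partial_t\varphivec$ in $L^2(\OmegaT)$ from the energy bounds \eqref{wkSol rNSK(eps,delta) unif-bds-I} alone, using $\|\rho\uvec\|_{L^2(\OmegaT)}\le\|\rho\|_{L^\infty(0,T;L^3(\Omega))}\|\uvec\|_{L^2(0,T;L^6(\Omega))}$ and $\artvisc\|\nablax\rho\|_{L^2(\OmegaT)}\le C\sqrt{\artvisc}$, whereas the paper uses the $L^{r(\artgrwth)}(\OmegaT)$ bounds of \eqref{wkSol rNSK(eps,delta) unif-bds-II}; both routes are uniform in $\artvisc$.
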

    \begin{proof}
        We define
        \begin{equation*}
            \varphivec_{\artprs,\artvisc}
            :=
            \Bog\left(\rho_{\artprs,\artvisc} - \fint_\Omega \rho_{\artprs,\artvisc} \right)
            =
            \Bog \left(\rho_{\artprs,\artvisc} - \fint_\Omega \rho_{0,\artprs} \right),
        \end{equation*}
        and deduce from the continuity equation \eqref{wk sol cont(eps,delt)} that
        \begin{equation*}
            \partial_t \varphivec_{\artprs,\artvisc}
            =
            \Bog\bigl(\div(\artvisc\nablax\rhoepsdelt - \rhoepsdelt\uvecepsdelt)\bigr)
            \quad \text{a.e.~in } \OmegaT.
        \end{equation*}
        By virtue of \eqref{reg wksol rNSK(delta,eps)}, \eqref{wkSol rNSK(eps,delta) unif-bds-I}, \eqref{wkSol rNSK(eps,delta) unif-bds-II}, and \Cref{lem:Bog}, we have
        \begin{equation}\label{est phi_epsdelt & grad}
        \begin{aligned}
            \left\|\varphivec_{\artprs,\artvisc}\right\|_{L^\infty(0,T;L^\infty(\Omega))}
            +
            \|\nablax \varphivec_{\artprs,\artvisc}\|_{L^\infty(0,T;L^\artgrwth(\Omega))}
            &\leq 
            C
            \left(
            \|\rhoepsdelt\|_{L^\infty(0,T;L^{\artgrwth}(\Omega))}
            +
            \|\rho_{0,\artprs}\|_{L^1(\Omega)}
            \right)
            \\
            &\leq 
            C\left(\data,\artprs\right),
        \end{aligned}
        \end{equation}
        and
        \begin{equation}\label{est partial_t phi_epsdelt}
                \left\| \partial_t \varphivec_{\artprs,\artvisc} \right\|_{L^{r(\artgrwth)}(\OmegaT)}
                \leq 
                C\left( \artvisc\|\nablax \rhoepsdelt\|_{L^{r(\artgrwth)}(\OmegaT)} + \|\rhoepsdelt\uvecepsdelt\|_{L^{r(\artgrwth)}(\OmegaT)}\right)
                \leq 
                C\left(\data,\artprs\right).
        \end{equation}
        We fix $\psi \in C^\infty_c((0,T))$.
        By a density argument, we have that $\psi\varphivec_{\artprs,\artvisc}$ is an admissible test function for the momentum equation \eqref{wk sol mom(eps,delt)} such that we obtain
        \begin{equation*}
            \begin{aligned}
                &\int_0^T \psi\int_\Omega  p_{\coup,\artprs}(\rhoepsdelt)\rhoepsdelt \, \dx \, \dt 
                \\
                &=
                \int_0^T \psi\int_\Omega p_{\coup,\artprs}(\rhoepsdelt)\left(\fint_\Omega \rho_{0,\artprs} \right) \, \dx \, \dt 
                -
                \int_0^T \partial_t\psi\int_\Omega \rhoepsdelt\uvecepsdelt\cdot \varphivec_{\artprs,\artvisc}\, \dx \, \dt \\
                &\quad 
                -
                \int_0^T \psi \int_\Omega \rhoepsdelt\uvecepsdelt\cdot \partial_t \varphivec_{\artprs,\artvisc} \, \dx \, \dt 
                -
                \int_0^T \psi \int_\Omega \rhoepsdelt\uvecepsdelt\otimes\uvecepsdelt : \nablax \varphivec_{\artprs,\artvisc} \, \dx \, \dt 
                \\
                &\quad 
                +
                \int_0^T \psi \int_\Omega \artvisc\nablax\rhoepsdelt\cdot\nablax\uvecepsdelt \cdot \varphivec_{\artprs,\artvisc} \, \dx  \, \dt 
                +
                \int_0^T \psi \int_\Omega \Svisc(\nablax \uvecepsdelt) : \nablax \varphivec_{\artprs,\artvisc} \, \dx \, \dt 
                \\
                &\quad 
                - 
                \int_0^T \psi \int_\Omega \rhoepsdelt\nablax \cepsdelt\cdot \varphivec_{\artprs,\artvisc}\, \dx  \, \dt
                =:\sum\limits_{i=1}^{7} I_{i}^{\artprs,\artvisc}.
            \end{aligned}
        \end{equation*}
        With \eqref{wkSol rNSK(eps,delta) unif-bds-I}, \eqref{est phi_epsdelt & grad}, and \eqref{est partial_t phi_epsdelt}, we estimate by using H\"older's inequality
        \begin{equation*}
            \sum\limits_{i=1}^{6} \left|I_{i}^{\artprs,\artvisc}\right|
            \leq 
            C\left(\data,\artprs\right)
            \left(\|\psi\|_{L^\infty((0,T))} + \|\partial_t \psi\|_{L^1((0,T))}\right).
        \end{equation*}
        Since these estimates are well-known from the theory on the compressible NSE, we omit the details here and refer to \cite[Chapter~3]{FeireislNovotnyPetzeltova2001} for a detailed exposition.
        Similarly, we estimate the term $I_7^{\artprs,\artvisc}$ as
        \begin{equation*}
            \begin{aligned}
                \left|I_7^{\artprs,\artvisc}\right|
                \leq 
                C\|\psi\|_{L^\infty((0,T))} \|\rhoepsdelt\nablax \cepsdelt\|_{L^1(\OmegaT)}\|\varphivec_{\artprs,\artvisc}\|_{L^\infty(0,T;L^\infty(\Omega))}
                \leq 
                C\left(\data,\artprs\right) \|\psi\|_{L^\infty((0,T))}.
            \end{aligned}
        \end{equation*}
        We have shown that
        \begin{equation*}
            \int_0^T \psi \int_\Omega p_{\coup,\artprs}(\rhoepsdelt)\rhoepsdelt\, \dx \, \dt 
            \leq 
            C\left(\data,\artprs\right)
            \left(\|\psi\|_{L^\infty((0,T))} + \|\partial_t \psi\|_{L^1((0,T))}\right).
        \end{equation*}
        From this relation, we conclude \eqref{lem:impr bd(delt) I} by approximating the identity function $\mathrm{id}_{(0,T)}$ on $(0,T)$ with suitable test functions $\psi \in C^\infty_c((0,T))$. 
    \end{proof}
    With \Cref{lem:impr bd(delt)} and the uniform bounds in \eqref{wkSol rNSK(eps,delta) unif-bds-I}, we identify weak limits that allow us, after passing to a non-relabeled subsequence, to perform the limit $\artvisc\to 0$ in the equations \eqref{wk sol cont(eps,delt)}--\eqref{weak sol parab(eps,delt)}.
    \begin{lemma}\label{lem:wk cv(eps,delt)}
        Let the hypothesis and notation of \Cref{lem:impr bd(delt)} hold true.
        Then we have 
        \begin{equation}\label{eps grad rhoepsdelt cv}
            \artvisc\nablax\rhoepsdelt
            \to 0 \quad \text{in  } L^2(0,T;L^{\frac{2\artgrwth}{\artgrwth-1}}(\Omega)),
            \quad 
            \artvisc\nablax \rhoepsdelt\cdot\nablax\uvecepsdelt \weak 0 \quad \text{in } L^{s(\artgrwth)}(\OmegaT).
        \end{equation}
        Moreover, after passing to a non-relabeled subsequence,
        \begin{equation}\label{lem:wk cv(eps,delt) convergences}
            \begin{aligned}
                &\rhoepsdelt\to \rhod \quad \text{in } \Cw([0,T];L^{\artgrwth}(\Omega)),
                \quad
                \rhoepsdelt\weak \rhod \quad \text{in } L^{\artgrwth+1}(\OmegaT),
                \\
                &\uvecepsdelt\weak \uvecd \quad \text{in } L^2(0,T;W^{1,2}_0(\Omega)),
                \quad
                \cepsdelt\weakstar \cd \quad \text{in } L^\infty(0,T;W^{1,2}(\Omega)),
                \\
                &\cepsdelt\weak \cd \quad \text{in } L^2(0,T;W^{2,2}(\Omega)),
                \quad
                \cepsdelt\to \cd \quad \text{in }C([0,T];L^2(\Omega))\cap L^2(0,T;W^{1,2}(\Omega))
                \\
                &\rhoepsdelt\uvecepsdelt \to \rhod\uvecd \quad \text{in } \Cw([0,T];L^{\frac{2\artgrwth}{\artgrwth+1}}(\Omega)),
                \quad 
                \partial_t \cepsdelt\weak \partial_t c \quad \text{in } L^2(\OmegaT),
                \\
                &\rhoepsdelt\nablax \cepsdelt \weak \rhod\nablax \cd \quad \text{in } L^2(0,T;L^{\frac{6\artgrwth}{\artgrwth+6}}(\Omega)),
                \quad
                p_{\coup,\artprs}(\rhoepsdelt)\weak \overline{p_{\coup,\artprs}} \quad \text{in } L^{\frac{\artgrwth+1}{\artgrwth}}(\OmegaT),
            \end{aligned}
        \end{equation}
        with $\rho \geq 0$ a.e.~in $\OmegaT$.
        Furthermore, the triplet $(\rhod,\uvecd,\cd)$ satisfies \eqref{wk sol rNSK(delt) regularity}--\eqref{wk sol rNSK IC(delt)} with $p_{\coup,\artprs}(\rhod)$ replaced by $\overline{p_{\coup,\artprs}}$.
    \end{lemma}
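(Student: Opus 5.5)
The plan is to combine the $\artvisc$-uniform bounds \eqref{wkSol rNSK(eps,delta) unif-bds-I} with the improved pressure estimate of \Cref{lem:impr bd(delt)}, extract weakly convergent subsequences via Banach--Alaoglu, and use Aubin--Lions type arguments to upgrade to the stated (weak-in-time or strong) convergences. Then we pass to the limit in the weak formulations \eqref{wk sol cont(eps,delt)}--\eqref{weak sol parab(eps,delt)}. The pressure limit is kept as the unidentified weak limit $\overline{p_{\coup,\artprs}}$.

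\textbf{Vanishing terms \eqref{eps grad rhoepsdelt cv}.} The energy gives $\sqrt{\artvisc}\|\nablax\rhoepsdelt\|_{L^2(\OmegaT)}\le C(\data)$. It also gives $\artvisc\artprs\artgrwth\int\rhoepsdelt^{\artgrwth-2}|\nablax\rhoepsdelt|^2\le C$, i.e. $\sqrt{\artvisc}\,\nablax\rhoepsdelt^{\artgrwth/2}$ is bounded in $L^2$. I would write $\artvisc\nablax\rhoepsdelt=\sqrt{\artvisc}\,\bigl(\sqrt{\artvisc}\,\rhoepsdelt^{(2-\artgrwth)/2}\cdot\rhoepsdelt^{(\artgrwth-2)/2}\nablax\rhoepsdelt\bigr)$. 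Alternatively, one can interpolate the $L^2(\OmegaT)$ bound of $\sqrt{\artvisc}\nablax\rhoepsdelt$ with $\rhoepsdelt\in L^\infty(0,T;L^\artgrwth)$, using the weighted gradient bound on $\{\rhoepsdelt\ge1\}$ and the plain bound on $\{\rhoepsdelt<1\}$. Either way one gets $\|\artvisc\nablax\rhoepsdelt\|_{L^2(0,T;L^{2\artgrwth/(\artgrwth-1)})}\le C\sqrt{\artvisc}\to0$; this is a Hölder splitting $\rho^{(\artgrwth-2)/2}\nabla\rho\cdot\rho^{(2-\artgrwth)/2}$ with care at small densities. For the second term, \eqref{wkSol rNSK(eps,delta) unif-bds-II} bounds $\artvisc\nablax\rhoepsdelt\cdot\nablax\uvecepsdelt$ in $L^{s(\artgrwth)}$, so it has a weak limit. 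Tested against smooth $\varphivec$, it is bounded by $\|\artvisc\nablax\rhoepsdelt\|_{L^2(L^{2\artgrwth/(\artgrwth-1)})}\|\nablax\uvecepsdelt\|_{L^2(L^2)}\|\varphivec\|_{L^\infty}$ up to Hölder exponents (admissible since $\tfrac{\artgrwth-1}{2\artgrwth}+\tfrac12\le1$). This tends to $0$, so the weak limit is zero.

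\textbf{Compactness.} The bounds yield the weak/weak-$\ast$ convergences for $\uvecepsdelt,\cepsdelt,\partial_t\cepsdelt$ directly. \Cref{lem:impr bd(delt)} together with $p_{\coup,\artprs}(r)\ge\artprs r^\artgrwth-C$ gives a bound on $\rhoepsdelt$ in $L^{\artgrwth+1}(\OmegaT)$. Hence $p_{\coup,\artprs}(\rhoepsdelt)$ is bounded in $L^{(\artgrwth+1)/\artgrwth}$, which is equi-integrable. For $\rhoepsdelt$ the continuity equation bounds $\partial_t\rhoepsdelt$ in $L^2(0,T;W^{-1,q})$ for some $q>1$. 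Combined with the $L^\infty(L^\artgrwth)$ bound and an Arzelà--Ascoli argument in weak topologies, this gives convergence in $\Cw([0,T];L^\artgrwth)$; the same argument for $\rhoepsdelt\uvecepsdelt$ uses the momentum equation to control time derivatives in a negative Sobolev space. Since $L^\artgrwth\hookrightarrow\hookrightarrow W^{-1,2}$, we get $\rhoepsdelt\to\rhod$ in $C([0,T];W^{-1,2})$, hence $\rhoepsdelt\uvecepsdelt\weak\rhod\uvecd$. The same compactness then gives $\rhoepsdelt\uvecepsdelt\otimes\uvecepsdelt\weak\rhod\uvecd\otimes\uvecd$, since the momenta converge in $C([0,T];W^{-1,2})$ and $\uvecepsdelt\weak\uvecd$ in $L^2(W^{1,2}_0)$. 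For $\cepsdelt$, the Aubin--Lions theorem with $W^{2,2}\hookrightarrow\hookrightarrow W^{1,2}$ and the $L^2$ time derivative gives strong convergence in $L^2(W^{1,2})$; with $W^{1,2}\hookrightarrow\hookrightarrow L^2$ and the $L^\infty(W^{1,2})$ bound it gives strong convergence in $C([0,T];L^2)$. This strong convergence of $\nablax\cepsdelt$ times weak convergence of $\rhoepsdelt$ yields $\rhoepsdelt\nablax\cepsdelt\weak\rhod\nablax\cd$.

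\textbf{Limit passage and the main obstacle.} Passing to the limit in each equation is then linear or product-of-strong-and-weak. The initial data are preserved via the $\Cw$ convergences, and the limits inherit the bounds \eqref{wk sol rNSK bounds(delt)} by lower semicontinuity. The main obstacle is the convective term: identifying the limit of $\rhoepsdelt\uvecepsdelt\otimes\uvecepsdelt$ requires the uniform-in-$\artvisc$ time regularity of $\rhoepsdelt\uvecepsdelt$, as in \cite{FeireislNovotnyPetzeltova2001}. Here one must check that the extra coupling term $\coup\rhoepsdelt\nablax\cepsdelt$ and the $\artvisc$-terms stay bounded in $L^1(0,T;W^{-k,2})$. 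They do, by the bounds above.
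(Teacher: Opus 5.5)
Your overall scheme matches the paper's proof: Banach--Alaoglu from \eqref{wkSol rNSK(eps,delta) unif-bds-I} and \Cref{lem:impr bd(delt)}, $\Cw$-convergence via time-derivative bounds in negative Sobolev spaces, identification of $\rhoepsdelt\uvecepsdelt$ and $\rhoepsdelt\uvecepsdelt\otimes\uvecepsdelt$ through compact embeddings into $W^{-1,2}(\Omega)$, and Aubin--Lions for $\cepsdelt$. However, your argument does not establish the first convergence in \eqref{eps grad rhoepsdelt cv}. Since $\frac{2\artgrwth}{\artgrwth-1}>2$, you need spatial integrability of $\artvisc\nablax\rhoepsdelt$ strictly above $L^2$, and neither of your mechanisms can produce it.

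In the H\"older splitting $\rhoepsdelt^{\frac{\artgrwth-2}{2}}\nablax\rhoepsdelt\cdot\rhoepsdelt^{\frac{2-\artgrwth}{2}}$, one factor is only known in $L^2$. H\"older's inequality with an $L^2$ factor can never give an $L^p$ bound with $p>2$. Moreover, $\rhoepsdelt^{\frac{2-\artgrwth}{2}}$ is a negative power and is unbounded where the density is small. Your alternative fares no better: the bound $\rhoepsdelt\in L^\infty(0,T;L^\artgrwth(\Omega))$ says nothing about the gradient, so there is nothing to interpolate it against. Raising the integrability of a gradient by interpolation requires control of second derivatives. Both dissipative bounds, $\sqrt{\artvisc}\|\nablax\rhoepsdelt\|_{L^2(\OmegaT)}$ and $\sqrt{\artvisc\artprs}\|\nablax\rhoepsdelt^{\artgrwth/2}\|_{L^2(\OmegaT)}$, are only $L^2$ bounds on gradients. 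So the claimed estimate $\|\artvisc\nablax\rhoepsdelt\|_{L^2(0,T;L^{\frac{2\artgrwth}{\artgrwth-1}}(\Omega))}\le C\sqrt{\artvisc}$ is unsupported.

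The missing ingredient is the second set of bounds \eqref{wkSol rNSK(eps,delta) unif-bds-II}, which comes from maximal $L^p$-regularity for the parabolic regularized continuity equation. It gives $\artvisc\|\nablax\rhoepsdelt\|_{L^{r(\artgrwth)}(\OmegaT)}\le C(\data,\artprs)$ with $r(\artgrwth)>3>\frac{2\artgrwth}{\artgrwth-1}$. Interpolating this bound with the $\sqrt{\artvisc}$-small $L^2(\OmegaT)$ bound yields $\artvisc\|\nablax\rhoepsdelt\|_{L^2(0,T;L^{\frac{2\artgrwth}{\artgrwth-1}}(\Omega))}\le C(\data,\artprs)\,\artvisc^{\vartheta/2}\to0$ for some $\vartheta\in(0,1)$; this is the paper's argument.

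The exponent is not cosmetic. It is the dual exponent to $\rhoepsdelt\uvecepsdelt\in L^{\frac{2\artgrwth}{\artgrwth+1}}(\Omega)$, which is needed when this convergence is fed into \Cref{prop:EVF(general)} in the proof of \Cref{lem:str cv density(eps,delt)}.

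Your proof of the second convergence in \eqref{eps grad rhoepsdelt cv} inherits the gap because it relies on the first, but it does not need to. The estimate $\|\artvisc\nablax\rhoepsdelt\cdot\nablax\uvecepsdelt\|_{L^1(\OmegaT)}\le\sqrt{\artvisc}\,\bigl(\sqrt{\artvisc}\|\nablax\rhoepsdelt\|_{L^2(\OmegaT)}\bigr)\|\nablax\uvecepsdelt\|_{L^2(\OmegaT)}\le C\sqrt{\artvisc}$, together with the $L^{s(\artgrwth)}$ bound, already identifies the weak limit as $0$. The remaining steps of your proposal are fine.
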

    \begin{proof}
        Since $r(\artgrwth)>\frac{2\artgrwth}{\artgrwth-1}>2$, we have by interpolation for some $\theta \in (0,1)$ that
        \begin{equation*}
            \artvisc\|\nablax \rhoepsdelt\|_{L^2(0,T;L^{\frac{2\artgrwth}{\artgrwth-1}}(\Omega))}
            \leq 
            \left(\artvisc\|\nablax \rhoepsdelt\|_{L^2(\OmegaT)}\right)^{\theta}
            \left(\artvisc\|\nablax\rhoepsdelt\|_{L^2(0,T;L^{r(\artgrwth)}(\Omega))}\right)^{1-\theta}
        \end{equation*}
        and the first convergence in \eqref{eps grad rhoepsdelt cv} follows from \eqref{wkSol rNSK(eps,delta) unif-bds-I} and \eqref{wkSol rNSK(eps,delta) unif-bds-II}.
        From \eqref{wkSol rNSK(eps,delta) unif-bds-I} we further conclude
        \begin{equation*}
            \artvisc\nablax\rhoepsdelt\cdot\nablax\uvecepsdelt \to 0 \quad  \text{in } \mathcal{D}^\prime(\OmegaT).
        \end{equation*}
        Taking into account \eqref{wkSol rNSK(eps,delta) unif-bds-II}, this implies the second convergence in \eqref{eps grad rhoepsdelt cv}.
        From \eqref{wkSol rNSK(eps,delta) unif-bds-I} and \eqref{wkSol rNSK(eps,delta) unif-bds-II} we conclude with the Sobolev embedding $W^{1,2}(\Omega)\hookrightarrow L^6(\Omega)$, H\"older's inequality, and the fact that the triplet $(\rhoepsdelt,\uvecepsdelt,\cepsdelt)$ satisfies \eqref{wk sol cont(eps,delt)} and \eqref{wk sol mom(eps,delt)}, that
        \begin{equation*}
            \|\partial_t \rhoepsdelt\|_{L^s(0,T;W^{-1,s}(\Omega))}
            +
            \|\partial_t (\rhoepsdelt\uvecepsdelt)\|_{L^s(0,T;W^{-1,s}(\Omega))}
            \leq 
            C(\constD)
        \end{equation*}
        for some $s \in (1,\infty)$.
        In combination with \eqref{wkSol rNSK(eps,delta) unif-bds-I}, this implies after passing to a non-relabeled subsequence that
        \begin{equation}\label{Cw(eps,delt)}
            \rhoepsdelt\to \rhod \quad \text{in } \Cw([0,T];L^\artgrwth(\Omega)),
            \quad 
            \rhoepsdelt\uvecepsdelt\to \overline{\rhod\uvecd} \quad \text{in } \Cw([0,T];L^{\frac{2\artgrwth}{\artgrwth+1}}(\Omega)).
        \end{equation}
        Using the Banach--Alaoglu theorem as well as the Sobolev embedding $W^{1,2}(\Omega)\hookrightarrow L^6(\Omega)$ and H\"older's inequality, we conclude from the uniform bounds in \eqref{wkSol rNSK(eps,delta) unif-bds-I} and \Cref{lem:impr bd(delt)} that, after passing to a non-relabeled subsequence,
        \begin{equation}\label{weak cv(eps,delt)}
            \begin{aligned}
                &\rhoepsdelt\weak \rhod \quad \text{in } L^{\artgrwth+1}(\OmegaT),
                \quad
                \uvecepsdelt\weak \uvecd \quad \text{in } L^2(0,T;W^{1,2}_0(\Omega)),
                \\
                &\cepsdelt\weakstar \cd \quad \text{in } L^\infty(0,T;W^{1,2}(\Omega)),
                \quad
                \cepsdelt\weak \cd \quad \text{in } L^2(0,T;W^{2,2}(\Omega)),
                \quad
                \\ 
                &p_{\coup,\artprs}(\rhoepsdelt)\weak \overline{p_{\coup,\artprs}} \quad \text{in } L^{\frac{\artgrwth+1}{\artgrwth}}(\OmegaT),
                \quad
                \rhoepsdelt\nablax\cepsdelt \weak \overline{\rhod\nablax \cd} \quad \text{in } L^2(0,T;L^{\frac{6\artgrwth}{\artgrwth+6}}(\Omega)),
                \\
                &\rhoepsdelt\uvecepsdelt\otimes\uvecepsdelt\weak \overline{\rhod\uvecd\otimes\uvecd} \quad \text{in } L^2(0,T;L^{\frac{3\artgrwth}{\artgrwth+3}}(\Omega)),
                \quad 
                \partial_t \cd \weak \partial_t c \quad \text{in } L^2(\OmegaT),
            \end{aligned}
        \end{equation}
        with $\rho \geq 0$ a.e.~in $\OmegaT$.
        Here, the overlined quantities denote weak limits of the corresponding sequences in their respective spaces.
        From the regularity of $\cd$ and $\partial_t \cd$, we conclude by standard results on Bochner spaces (see e.g.~\cite[Chapter~5]{EvansPDE2010}) that $\cd \in C([0,T];W^{1,2}(\Omega))$.
        In particular, we have shown that the triplet $(\rhod,\uvecd,\cd)$ satisfies the regularity \eqref{wk sol rNSK(delt) regularity}.
        Combining \eqref{Cw(eps,delt)} with the second convergence in \eqref{weak cv(eps,delt)} and using the compact Sobolev embeddings $L^\artgrwth(\Omega) \hookrightarrow\hookrightarrow W^{-1,2}(\Omega)$ and $L^{\frac{2\artgrwth}{\artgrwth+1}}(\Omega)\hookrightarrow\hookrightarrow W^{-1,2}(\Omega)$ yields
        \begin{equation}\label{id I lim (delt)}
            \overline{\rhod\uvecd} = \rhod\uvecd,
            \quad 
            \overline{\rhod\uvecd\otimes\uvecd} = \rhod\uvecd\otimes\uvecd 
            \quad \text{a.e.~in } \OmegaT.
        \end{equation}
        Moreover, by virtue of the uniform bounds in \eqref{wkSol rNSK(eps,delta) unif-bds-I} as well as the compact Sobolev embedding $W^{1,2}(\Omega)\hookrightarrow\hookrightarrow L^2(\Omega)$, we conclude with the Aubin--Lions theorem that
        \begin{equation}\label{C(eps,delt)}
            \cepsdelt\to \cd \quad \text{in } C([0,T];L^2(\Omega))\cap L^2(0,T;W^{1,2}(\Omega)).
        \end{equation}
        Together with the first convergence in \eqref{weak cv(eps,delt)}, this yields
        \begin{equation}\label{id II lim(delt)}
            \overline{\rhod\nablax \cd} = \rhod\nablax \cd 
            \quad \text{a.e.~in } \OmegaT.
        \end{equation}
        With \eqref{Cw(eps,delt)}--\eqref{id II lim(delt)} we have shown that the convergences \eqref{lem:wk cv(eps,delt) convergences} hold true.
        In particular, we can pass to the limit $\artvisc\to 0$ in \eqref{wk sol cont(eps,delt)}--\eqref{weak sol parab(eps,delt)} and obtain that the triplet $(\rhod,\uvecd,\cd)$ satisfies precisely \eqref{wk sol rNSK cont(delt)}--\eqref{wk sol rNSK parab(delt)} with $p_{\coup,\artprs}(\rhod)$ replaced by $\overline{p_{\coup,\artprs}}$.
        From the convergences \eqref{Cw(eps,delt)} and \eqref{C(eps,delt)}, we conclude that the triplet $(\rhod,\uvecd,\cd)$ satisfies the initial conditions \eqref{wk sol rNSK IC(delt)}.
    \end{proof}

    
    \subsection{Strong convergence of the density}\label{subsec:strong convergence of the density}
    To prove Proposition~\ref{prop:wkSol rNSK(delta)}, we wish to verify the strong convergence $\rhoepsdelt\to \rhod$ in $L^1(\OmegaT)$, which yields in particular the relation $\overline{p_{\coup,\artprs}}=p_{\coup,\artprs}(\rhod)$.
    To obtain this convergence, we adapt the approach that is known from the theory on the compressible NSE (see e.g.~\cite{Lions1998,FeireislNovotnyPetzeltova2001,Feireisl2002}) to the rNSKE.
    More specifically, we interpret the additional term $\rhod\nablax \cd$ in the momentum equation as a force term and use the general version of the effective viscous flux lemma (see \Cref{prop:EVF(general)}) to derive a compactness relation for the effective viscous flux.
    Then we exploit this relation as well as the decomposition for the pressure function \eqref{dec prs} to conclude the strong convergence of the density by using techniques from \cite{Feireisl2002}.
    \begin{lemma}\label{lem:str cv density(eps,delt)}
        Let the hypotheses and notations of Lemma~\ref{lem:wk cv(eps,delt)} hold true. Then we have
        \begin{equation*}
            \begin{aligned}
                \rho_{\artprs,\artvisc} \to \rhod \quad \text{in } L^{(\artgrwth+1)^-}(\OmegaT),
            \end{aligned}
        \end{equation*}
        and, in particular,
        \begin{equation*}
            \overline{p_{\coup,\artprs}} = p_{\coup,\artprs}(\rhod) \quad \text{a.e.~in } \OmegaT.
        \end{equation*}
    \end{lemma}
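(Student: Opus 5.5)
We follow the Lions--Feireisl strategy, treating $\coup\rhoepsdelt\nablax\cepsdelt$ as a force and splitting the pressure as $p_{\coup,\artprs}=h+\frac{\coup}{2}r^2+\artprs r^\artgrwth+q$. The monotone part is $P_m(r):=h(r)+\frac{\coup}{2}r^2+\artprs r^\artgrwth$, which is nondecreasing, and the remainder $q\in C^\infty_c$ is bounded and Lipschitz.

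\textbf{Step 1: effective viscous flux.} We apply the general effective viscous flux result of the Appendix (\Cref{prop:EVF(general)}) to the sequence $(\rhoepsdelt,\uvecepsdelt)$ and to $b(\rho)=T_k(\rho)$, where $T_k$ is a concave truncation. The forcing $\coup\rhoepsdelt\nablax\cepsdelt$ is bounded in $L^2(0,T;L^{\frac{6\artgrwth}{\artgrwth+6}})$, and \Cref{lem:wk cv(eps,delt)} gives $\cepsdelt\to\cd$ strongly in $L^2(0,T;W^{1,2})$. Its contribution to the commutator $\nablax\Delta^{-1}\div$ is therefore compact, and it does not appear in the limit identity. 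The artificial viscosity terms are disposed of by \eqref{eps grad rhoepsdelt cv}. The result is
\begin{equation*}
\overline{p_{\coup,\artprs}(\rho)T_k(\rho)}-(\tfrac43\shearvisc+\bulkvisc)\overline{T_k(\rho)\div\uvec}
=\overline{p_{\coup,\artprs}}\;\overline{T_k(\rho)}-(\tfrac43\shearvisc+\bulkvisc)\overline{T_k(\rho)}\div\uvecd .
\end{equation*}
Since $\artgrwth$ is large, $\rhod\in L^2(\OmegaT)$ and $\uvecd\in L^2W^{1,2}$. Hence $(\rhod,\uvecd)$ is a renormalized solution by DiPerna--Lions, and we may work directly with $b(\rho)=\rho\log\rho$ without truncations. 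At the $\artvisc$-level, the renormalized equation holds with the sign-favourable dissipative term $\artvisc b''(\rho)|\nablax\rho|^2\ge0$, which we can drop.

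\textbf{Step 2: Feireisl's argument with the non-monotone part.} Subtracting the renormalized equations for $\rho\log\rho$ at the $\artvisc$-level and in the limit gives, for $D:=\overline{\rho\log\rho}-\rhod\log\rhod\ge0$,
\begin{equation*}
\int_\Omega D(\tau)\,\dx\le \frac{1}{\frac43\shearvisc+\bulkvisc}\int_0^\tau\!\!\int_\Omega \overline{p_{\coup,\artprs}}\rhod-\overline{p_{\coup,\artprs}(\rho)\rho}\,\dx\,\dt,
\end{equation*}
where the effective viscous flux identity from Step 1 replaces $\overline{\rho\div\uvec}-\rhod\div\uvecd$. For the monotone part $P_m$, the Minty-type inequality $\overline{P_m(\rho)\rho}\ge\overline{P_m}\rhod$ makes that contribution nonpositive. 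The $q$-part remains: we need $\overline{q(\rho)}\rhod-\overline{q(\rho)\rho}\le C\,\cdot$(a defect measure). Following \cite{Feireisl2002}, we write $q(\rho)\rho=:G(\rho)$ with $G$ Lipschitz, and choose a convex majorant $\Lambda r\log r - G(r)$ convex for $\Lambda$ large (possible since $q\in C_c^\infty$ and $r\log r$ has $r^{-1}$ curvature on compact sets). Convexity then yields
\begin{equation*}
|\overline{q(\rho)}\rhod-\overline{q(\rho)\rho}|\le C\big(\overline{\rho\log\rho}-\rhod\log\rhod\big)+C\|\overline{q(\rho)}-q(\rhod)\|\rhod ,
\end{equation*}
and the last term is likewise controlled by $D$ via strict convexity of $r\log r$ on $\operatorname{supp}q$.

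\textbf{Step 3: conclusion.} Gr\"onwall applied to $\int_\Omega D(\tau)\le C\int_0^\tau\int_\Omega D$, with $D(0)=0$, gives $D\equiv0$. Strict convexity then gives $\rhoepsdelt\to\rhod$ a.e. and in $L^1$. Interpolation with the $L^{\artgrwth+1}$ bound of \Cref{lem:impr bd(delt)} upgrades this to $L^{(\artgrwth+1)^-}$ convergence, which identifies $\overline{p_{\coup,\artprs}}$.

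The main obstacle is Step 2: turning the non-monotone term $q$ into a quantity controlled by $D$ with a constant independent of time. We would first try the convexity comparison above. If the pointwise constant fails near $\rho=0$, we would instead use Feireisl's oscillation defect / truncation version with $T_k$ and $L_k$.
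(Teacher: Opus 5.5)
Your overall architecture is the paper's: the effective viscous flux identity at the $\artvisc$-level from \Cref{prop:EVF(general)}, the $r\log r$ renormalization with the favourable $\artvisc$-dissipation dropped, the Minty inequality for the monotone part $h+\frac{\coup}{2}r^2+\artprs r^\artgrwth$, Feireisl's $\Lambda r\log r$ convexity trick for $q$, and Gr\"onwall. However, there is a genuine gap, and it sits exactly at the step you flagged.

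Split $\rhod\overline{q}-\overline{\rhod q}=\rhod\bigl(\overline{q}-q(\rhod)\bigr)+\bigl(\rhod q(\rhod)-\overline{\rhod q}\bigr)$. The first term carries the unbounded weight $\rhod$. Convexity of $\Lambda r\log r-q(r)$ only gives $\rhod\bigl(\overline{q}-q(\rhod)\bigr)\le\Lambda\rhod D$, and $\rhod D$ is not bounded by $CD$, so Gr\"onwall does not close. Appealing to strict convexity of $r\log r$ on $\operatorname{supp}q$ does not reach the dangerous points. These are the points where $\rhod$ is large while the sequence still oscillates into $\operatorname{supp}q$, so that $\overline{q}\neq 0=q(\rhod)$. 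Your fallback worry about $\rho$ near $0$ points to the harmless region, where the weight is small.

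The missing idea is the sign structure built into \eqref{dec prs}. Since $q\le0$ and $q=0$ on $[R,\infty)$, we have $\overline{q}\le0$. Hence on $\{\rhod>R\}$ the first term equals $\rhod\overline{q}\le0$ and can be discarded, while on $\{\rhod\le R\}$ it is at most $\Lambda R D$. Together with $\Lambda D$ from the second term, this yields $\int_\Omega D(\tau)\,\dx\le C\Lambda(R+1)\int_0^\tau\!\int_\Omega D\,\dx\,\dt$. Your two-sided bound is in fact true. But proving it without the sign needs a quantitative Jensen-gap estimate via Young measures, e.g.\ $r\log r-\rhod\log\rhod-(1+\log\rhod)(r-\rhod)\ge\rhod/2$ for $r\le R\le\rhod/6$, and the proposal does not supply this.

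Two smaller points. First, Step 1 is internally inconsistent: you derive the flux identity for $T_k(\rhoepsdelt)$, but Step 2 uses it for $\rhoepsdelt$ itself. Truncation is neither needed nor convenient at this level. The paper applies \Cref{prop:EVF(general)} directly with $g_n=\rhoepsdelt$ and $f_n=\artvisc\Deltax\rhoepsdelt$, which the $L^{\artgrwth+1}$ bound of \Cref{lem:impr bd(delt)} permits. A $T_k$ version would need the renormalized equation for $T_k(\rhoepsdelt)$. Its right-hand side contains $-\artvisc T_k''(\rhoepsdelt)|\nablax\rhoepsdelt|^2$, for which the available bounds give only $L^1(\OmegaT)$ control, so it is not covered by \eqref{EVF(general) cv}. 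Second, to bound $\rhod q(\rhod)-\overline{\rhod q}$ from above you need $\Lambda r\log r+rq(r)$ convex, not $\Lambda r\log r-rq(r)$. This is harmless, since both hold for $\Lambda$ large.
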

    \begin{proof}
        We set $h_{\coup,\artprs}(r):= h(r)+\frac{\coup}{2}r^2 + \artprs r^\Gamma$.
        With the Banach--Alaoglu theorem we conclude from the uniform bounds in \eqref{wkSol rNSK(eps,delta) unif-bds-I} and in \Cref{lem:impr bd(delt)} that there exist functions $\overline{h_{\coup,\artprs}}\in L^{\frac{\artgrwth+1}{\artgrwth}}(\OmegaT)$, $\overline{q_\artprs}\in L^\infty(0,T;L^\infty(\Omega))$, $\overline{\rhod q_\artprs}\in L^{\artgrwth+1}(\OmegaT)$, $\overline{\rhod \div\uvecd} \in L^2(0,T;L^{\frac{2\artgrwth}{\artgrwth+2}}(\Omega))$ with $\overline{p_{\coup,\artprs}}=\overline{h_{\coup,\artprs}}+\overline{q_{\artprs}}$ and $\overline{q_\artprs}\leq 0$, such that, after passing to a non-relabeled subsequence,
        \begin{equation}\label{EVF(eps,delt) I}
            \begin{aligned}
                &h_{\coup,\artprs}(\rhoepsdelt)\weak \overline{h_{\coup,\artprs}} \quad \text{in } L^{\frac{\artgrwth+1}{\artgrwth}}(\OmegaT),
                \quad 
                q(\rhoepsdelt)\weakstar \overline{q_\artprs} \quad \text{in } L^\infty(0,T;L^\infty(\Omega)),
                \\
                &\rhoepsdelt q(\rhoepsdelt) \weak \overline{\rhod q_\artprs} \quad \text{in } L^{\artgrwth+1}(\OmegaT),
                \quad
                \rhoepsdelt\div\uvecepsdelt \weak \overline{\rhod\div\uvecd} \quad \text{in } L^2(0,T;L^{\frac{2\artgrwth}{\artgrwth+2}}(\Omega)).
            \end{aligned}
        \end{equation}
        Let now $\xi \in C^\infty_c(\Omega)$. 
        By virtue of \eqref{eps grad rhoepsdelt cv}, we have that
        \begin{equation}\label{weak cv deltax rhoepsdelt}
            \xi \artvisc\Deltax \rhoepsdelt \weak 0 \quad \text{in } L^2(0,T;W^{-1,2}(\Omega)).
        \end{equation}
        Moreover, we have for any $\varphi \in C^\infty_c(\OmegaT)$ that
        \begin{equation*}
            \begin{aligned}
            &\left|
                \int_0^T \int_\Omega 
                \varphi \nablax\Deltax^{-1}\left[\xi \artvisc \Deltax \rhoepsdelt\right]
                \, \dx  \, \dt
            \right|
            \\
            &\leq 
            \left|
            \int_0^T \int_\Omega 
            \artvisc \xi \,\nablax\otimes\nablax \Deltax^{-1}[\varphi] \cdot \nablax \rhoepsdelt 
            \, \dx  \, \dt 
            \right|
            +
            \left|
            \int_0^T \int_\Omega \artvisc \nablax \Deltax^{-1}[\varphi] \nablax \xi \cdot \nablax \rhoepsdelt \, \dx  \, \dt
            \right|
            \\
            &\leq 
            C \|\xi\|_{W^{1,\infty}(\Omega)} 
            \|\varphi \|_{L^2(0,T;L^{\frac{2\artgrwth}{\artgrwth+1}}(\Omega))} \artvisc\|\nablax\rhoepsdelt\|_{L^2(0,T;L^{\frac{2\artgrwth}{\artgrwth-1}}(\Omega))},
            \end{aligned}
        \end{equation*}
        where $\Deltax^{-1}$ denotes the inverse Laplace operator on $\RR^3$, and where we have used the fact that $\nablax\otimes\nablax\Deltax^{-1}\colon L^s(\Omega) \to L^s(\Omega;\RR^{3\times 3})$ and $\nablax \Deltax^{-1}\colon L^s(\Omega) \to W^{1,s}(\Omega;\RR^3)$ are continuous linear operators for any $s\in (1,\infty)$, see e.g.~\cite[Theorem~9]{FeireislNovotny2017singlim}.
        With \eqref{eps grad rhoepsdelt cv} we conclude by duality
        \begin{equation}\label{strong cv deltax rhoepsdelt}
            \nablax \Deltax^{-1}\big[\xi \artvisc\Deltax\rhoepsdelt\big] \to 0 \quad \text{in } L^2(0,T;L^{\frac{2\artgrwth}{\artgrwth-1}}(\Omega)).
        \end{equation}
        By virtue of \eqref{eps grad rhoepsdelt cv}, \eqref{lem:wk cv(eps,delt) convergences}, \eqref{weak cv deltax rhoepsdelt}, \eqref{strong cv deltax rhoepsdelt}, and the fact that the triplet $(\rhoepsdelt,\uvecepsdelt,\cepsdelt)$ satisfies \eqref{wk sol cont(eps,delt)} and \eqref{wk sol mom(eps,delt)}, we may apply \Cref{prop:EVF(general)} to conclude that the relation
        \begin{equation*}
        \begin{aligned}
            &\lim\limits_{\artvisc\to 0}
            \int_0^T\psi \int_\Omega \phi 
            \rhoepsdelt\left(p_{\coup,\artprs}(\rhoepsdelt) - \left(\bulkvisc+\frac{4\shearvisc}{3}\right)\div\uvecepsdelt\right)
            \, \dx \, \dt
            \\
            &=
            \int_0^T \psi \int_\Omega \phi
            \rhod \left(\overline{p_{\coup,\artprs}} - \left(\bulkvisc + \frac{4\shearvisc}{3}\right) \div\uvecd \right)
            \, \dx \, \dt
        \end{aligned}
        \end{equation*}
        holds for any test functions $\psi \in C^\infty_c((0,T))$ and $\phi \in C^\infty_c(\Omega)$.
        This implies in view of the weak convergences in \eqref{EVF(eps,delt) I} that 
        \begin{equation}\label{EVF(eps,delt) II}
            \rhoepsdelt h_{\coup,\artprs}(\rhoepsdelt) \to \overline{\rhod h_{\coup,\artprs}} \quad \text{in } \mathcal{D}^\prime(\OmegaT),
        \end{equation}
        with 
        \begin{equation}\label{EVF(eps,delt) III}
            \overline{\rhod h_{\coup,\artprs}}
            :=
            \rhod \overline{h_{\coup,\artprs}} + \rhod\overline{q_\artprs} - \overline{\rhod q_\artprs}
            +
            \left(\bulkvisc + \frac{4\shearvisc}{3}\right)\big(\overline{\rhod\div\uvecd} - \rhod \div\uvecd \big)
            \in L^1(\OmegaT).
        \end{equation}
        In particular, we have due to the monotonicity of $r\mapsto h_{\coup,\artprs}(r)$ (see e.g.~\cite[Lemma~3.35]{NovotnyStraskraba2004}) that
        \begin{equation}\label{EVF(eps,delt) IV}
            \overline{\rhod h_{\coup,\artprs}} - \rhod \overline{h_{\coup,\artprs}}\geq 0 \quad \text{a.e.~in } \OmegaT.
        \end{equation}
        From the continuity equation \eqref{wk sol rNSK cont(delt)} and the regularity of $\rhod$ and $\uvecd$ (see \eqref{lem:wk cv(eps,delt) convergences}) we deduce that
        \begin{equation*}
            \partial_t(\rhod\log\rhod) 
            +
            \div\bigl((\rhod\log\rhod)\uvecd\bigr)
            +
            \rhod \div\uvecd = 0
            \quad \text{in } \mathcal{D}^\prime(\RR^3_T),
        \end{equation*}
        see e.g.~\cite[Lemma~11.13]{FeireislNovotny2017singlim}.
        In particular, we have $\rhod\log\rhod \in \Cw([0,T];L^{\artgrwth^-}(\Omega))$ and
        \begin{equation}\label{rhodlogrhod(delt)}
            \int_\Omega 
            \rhod\log\rhod(\tau) 
            -
            \rho_{0,\artprs}\log\rho_{0,\artprs}
            \, \dx 
            =
            -\int_0^\tau\int_\Omega \rhod \div\uvecd \, \dx \, \dt \quad \forall \tau \in [0,T].
        \end{equation}
        By virtue of the uniform bounds in \eqref{lem:unif-in-n bds I}, we have for a non-relabeled subsequence that
        \begin{equation*}
            \rhoepsdelt\log\rhoepsdelt \weakstar \overline{\rhod\log\rhod} \quad \text{in } L^\infty(0,T;L^{\artgrwth^-}(\Omega)).
        \end{equation*}
        From \eqref{reg wksol rNSK(delta,eps)} and \eqref{wk sol cont(eps,delt)}, we conclude by approximating $r\mapsto r \log r$ by suitable smooth and convex functions (see e.g.~\cite[Section~7.9.3]{NovotnyStraskraba2004}) that
        \begin{equation*}
            \int_\Omega 
            \rhoepsdelt\log \rhoepsdelt(\tau) - \rho_{0,\artprs}\log\rho_{0,\artprs} \, \dx
            \leq 
            -\int_0^\tau \int_\Omega \rhoepsdelt\div\uvecepsdelt\, \dx \, \dt
            \quad \text{for a.e.~} \tau \in (0,T).
        \end{equation*}
        Passing to the limit $\artvisc\to 0$ leads to 
        \begin{equation}\label{wklim rhodlogrhod(delt)}
            \int_\Omega 
            \overline{\rhod\log\rhod}(\tau) - \rho_{0,\artprs}\log \rho_{0,\artprs}
            \, \dx 
            \leq
            -
            \int_0^\tau \int_\Omega 
            \overline{\rhod\div\uvecd}
            \, \dx \, \dt 
            \quad \text{for a.e.~} \tau \in (0,T).
        \end{equation}
        Combining \eqref{rhodlogrhod(delt)} and \eqref{wklim rhodlogrhod(delt)} leads for almost all $\tau \in (0,T)$ to
        \begin{equation}\label{wk-lim-q-str-conv}
        \begin{aligned}
            \int_\Omega 
            \overline{\rhod \log \rhod}(\tau) - \rhod\log\rhod(\tau)
            \, \dx 
            &\leq 
            \int_0^\tau \int_\Omega 
            \rhod\div\uvecd - \overline{\rhod\div\uvecd}
            \, \dx \, \dt
            \\
            &\leq 
            C\int_0^\tau \int_\Omega \rhod\overline{q_\artprs} - \overline{\rhod q_\artprs} \, \dd x \, \dd t,
        \end{aligned}
        \end{equation}
        where we have used \eqref{EVF(eps,delt) III} and \eqref{EVF(eps,delt) IV} to obtain the second line.
        From now on, we can proceed as in \cite[Section~5]{Feireisl2002} to derive the strong convergence of the density. For the convenience of the reader we repeat these arguments here.
        Since $q\in C^\infty_c([0,\infty))$, we find some $R>0$ such that $q=0$ on $[R,\infty)$.
        Moreover, we can choose some constant $\Lambda>0$ large enough such that
        \begin{align*}
            r\mapsto \Lambda r \log r + rq(r),
            &&
            r\mapsto \Lambda r \log r - q(r)
        \end{align*}
        are both convex functions on $[0,\infty)$.
        In view of the weak convergences in \eqref{EVF(eps,delt) I}, we then have
        \begin{equation*}
            \Lambda\left(\overline{\rhod\log\rhod} - \rhod\log\rhod\right) \geq \rhod q(\rhod) - \overline{\rhod q_{\artprs}},
            \quad 
            \Lambda\left(\overline{\rhod\log\rhod} - \rhod\log\rhod\right) \geq \overline{q_\artprs}-q(\rhod)
            \quad \text{a.e.~in } \OmegaT,
        \end{equation*}
        see e.g.~\cite[Theorem~11.27]{FeireislNovotny2017singlim}.
        Thanks to these relations we estimate for almost all $\tau \in (0,T)$ 
        \begin{equation*}
        \begin{aligned}
            &\int_0^\tau \int_\Omega 
            \rhod \overline{q_\artprs} - \overline{\rhod q_\artprs}
            \, \dx \, \dt
            \\
            &=
            \int_0^\tau \int_\Omega 
            \rhod\left(\overline{q_\artprs} - q(\rhod)\right)
            \, \dx  \, \dt
            +
            \int_0^\tau \int_\Omega 
            \rhod q(\rhod) - \overline{\rhod q_\artprs}
            \, \dx \, \dt 
            \\
            &\leq 
            \int\int_{I_{R,\tau}} 
            \rhod\left(\overline{q_\artprs} - q(\rhod)\right)
            \, \dx \, \dt
            +
            \Lambda \int_0^\tau \int_\Omega 
            \overline{\rhod\log\rhod} - \rhod\log\rhod
            \, \dx \, \dt 
            \\
            &\leq
            \Lambda
            \int\int_{I_{R,\tau}} 
            \rhod\left(\overline{\rhod\log\rhod} - \rhod\log\rhod\right)
            \, \dx \, \dt
            +
            \Lambda \int_0^\tau \int_\Omega 
            \overline{\rhod\log\rhod} - \rhod\log\rhod
            \, \dx \, \dt  
            \\
            &\leq 
            \Lambda(R+1)
            \int_0^\tau \int_\Omega 
            \overline{\rhod\log\rhod} - \rhod\log\rhod
            \, \dx \, \dt,
        \end{aligned}
        \end{equation*}
        where $I_{R,\tau}:=\{(t,x)\in(0,\tau)\times\Omega \mid \rhod(t,x) \leq R\}$, and where we have used in the third line that $\overline{q_{\artprs}}\leq 0$ a.e.~in $\OmegaT$ and $q=0$ on $[R,\infty)$.
        In total, we have shown that for almost all $\tau \in (0,T)$ we have
        \begin{equation*}
            \int_\Omega 
            \overline{\rhod\log\rhod}(\tau) - \rhod\log\rhod(\tau)
            \, \dx
            \leq 
            C\int_0^\tau \int_\Omega 
            \overline{\rhod \log \rhod} - \rhod\log\rhod
            \, \dx \, \dt.
        \end{equation*}
        With Gr\"onwall's inequality, we conclude that
        \begin{equation*}
            \int_\Omega\overline{\rhod\log\rhod}(\tau) \, \dx \leq \int_\Omega \rhod\log\rhod (\tau) \quad \text{for a.e.~} \tau \in (0,T),
        \end{equation*}
        which implies by convexity of $r\mapsto r\log r$ that
        \begin{equation*}
            \overline{\rhod\log\rhod} = \rhod\log\rhod \quad \text{a.e.~in } \OmegaT.
        \end{equation*}
        In particular, we have that
        \begin{equation*}
            \rhoepsdelt \to \rhod \quad \text{in } L^1(\OmegaT),
        \end{equation*}
        see e.g.~\cite[Lemma~3.34]{NovotnyStraskraba2004}.
        In view of \eqref{lem:impr bd(delt) I}, this implies by interpolation
        \begin{equation*}
            \rhoepsdelt\to \rhod \quad \text{in } L^{(\artgrwth+1)^-}(\OmegaT)
        \end{equation*}
        and, in particular, $\overline{p_{\coup,\artprs}}=p_{\coup,\artprs}(\rhod)$ a.e.~in $\OmegaT$.
    \end{proof}
    Finally, we use the convergences derived so far to conclude the proof of Proposition~\ref{prop:wkSol rNSK(delta)} by performing the artificial viscosity limit $\artvisc\to 0$ in the energy inequality.
    \begin{proof}[Proof of Proposition~\ref{prop:wkSol rNSK(delta)}]
        In view of Lemma~\ref{lem:wk cv(eps,delt)} and Lemma~\ref{lem:str cv density(eps,delt)}, we only have to show that the triplet $(\rhod,\uvecd,\cd)$ satisfies the bound \eqref{wk sol rNSK bounds(delt)} as well as the energy inequality \eqref{wk sol rNSK energy(delt)}.
        The triplet $(\rhoepsdelt,\uvecepsdelt,\cepsdelt)$ satisfies \eqref{wkSol rNSK(eps,delt) energy}, which implies that 
        \begin{equation}\label{energy ineq E_m,delt I}
            \begin{aligned}
                &-\int_0^T E_{\mathrm{m},\artprs}[\rhoepsdelt,\rhoepsdelt\uvecepsdelt,\cepsdelt]\partial_t \psi \, \dt 
                +
                \int_0^T \psi \int_\Omega 
                \Svisc(\nablax \uvecepsdelt):\nablax\uvecepsdelt + \paracoup|\partial_t\cepsdelt|^2
                \, \dx \, \dt \\
                &\leq 
                E_{\mathrm{m},\artprs}[\rho_{0,\artprs},(\rho\uvec)_{0,\artprs},c_0] \psi(0)
                +
                \int_0^T \psi \int_\Omega
                q(\rhoepsdelt)\div\uvecepsdelt
                +\artvisc\coup \nablax \rhoepsdelt\cdot\nablax\cepsdelt
                \, \dx \, \dt 
            \end{aligned}
        \end{equation}
        holds for any $\psi \in C^\infty_c([0,T))$ with $\psi \geq 0$.
        In view of Lemma~\ref{lem:wk cv(eps,delt)} and Lemma~\ref{lem:str cv density(eps,delt)}, we have that
        \begin{equation*}
        \begin{aligned}
            &\left|\int_0^T \psi \int_\Omega \artvisc\coup \nablax \rhoepsdelt\cdot\nablax\cepsdelt
                \, \dx \, \dt \right|
                \leq 
                \coup\|\psi\|_{L^\infty(0,T)}\|\artvisc\nablax\rhoepsdelt\|_{L^2(\OmegaT)}\|\nablax\cepsdelt\|_{L^2(\OmegaT)}
                \to 0,
                \\
            &\int_0^T \psi\int_\Omega q(\rhoepsdelt)\div\uvecepsdelt\, \dx \, \dt 
            \to 
            \int_0^T \psi \int_\Omega q(\rhod) \div\uvecd 
            \, \dx \, \dt,
        \end{aligned}
        \end{equation*}
        and
        \begin{equation*}
            E_{\mathrm{m},\artprs}[\rhoepsdelt,\rhoepsdelt\uvecepsdelt,\cepsdelt] \weak E_{m,\artprs}[\rhod, \rhod\uvecd,\cd]
            \quad \text{in }L^1((0,T)).
        \end{equation*}
        Thus, by using the weak lower semi-continuity of convex functionals, passing to the limit $\artvisc\to 0$ in \eqref{energy ineq E_m,delt I} leads to
        \begin{equation}\label{energy ineq E_m,delt II}
            \begin{aligned}
                &-\int_0^T \, E_{\mathrm{m},\artprs}[\rhod,\rhod\uvecd,\cd]\partial_t \psi \, \dt 
                +
                \int_0^T \psi \int_\Omega 
                \Svisc(\nablax \uvecd):\nablax\uvecd + \paracoup|\partial_t\cd|^2
                \, \dx \, \dt \\
                &\leq 
                E_{\mathrm{m},\artprs}[\rho_{0,\artprs},(\rho\uvec)_{0,\artprs},c_0] \psi(0)
                +
                \int_0^T \psi \int_\Omega q(\rhod)\div\uvecd\, 
                \dx \, \dt .
            \end{aligned}
        \end{equation}
        From \eqref{wk sol rNSK cont(delt)} and the regularity of $\rhod$ and $\uvecd$ (see~\eqref{lem:wk cv(eps,delt) convergences}) we deduce
        \begin{equation*}
            \partial_tQ(\rhod)
            +
            \div\bigl(Q(\rhod)\uvecd\bigr)
            +
            q(\rhod)\div\uvecd
            = 0
            \quad \text{in } \mathcal{D}^\prime(\RR^3_T),
        \end{equation*}
        where $Q$ denotes the pressure potential corresponding to $q$ in \eqref{def press pot H Q W}, see e.g.~\cite[Lemma~11.13]{FeireislNovotny2017singlim}.
        This implies 
        \begin{equation}\label{int id q(rhod)div uvecd}
            \int_0^T \psi \int_\Omega  q(\rhod)\div\uvecd 
            \, \dx \, \dt 
            =
            \int_0^T \partial_t \psi \int_\Omega Q(\rhod) \, \dx \, \dt 
            +
            \psi(0) \int_\Omega Q(\rho_{0,\artprs}) \, \dx \, .
        \end{equation}
        Substituting \eqref{int id q(rhod)div uvecd} into \eqref{energy ineq E_m,delt II} yields precisely \eqref{wk sol rNSK energy(delt)}.
        Due to \eqref{mollify III}, we have that the triplet $(\rhoepsdelt,\uvecepsdelt,\cepsdelt)$ satisfies the bound \eqref{energy bds(n) I} with $D_0$ replaced by $\data$.
        This implies \eqref{wk sol rNSK bounds(delt)} by using Lemma~\ref{lem:wk cv(eps,delt)}, Lemma~\ref{lem:str cv density(eps,delt)}, and the weak lower semi-continuity of norms.
    \end{proof}

%% file: VanArtPres.tex
    Let us assume that the hypotheses and notations of \Cref{main result:ex wkSol} hold true, and let $\rho_{0,\artprs}$, $(\rho\uvec)_{0,\artprs}$ denote regularized initial conditions satisfying \eqref{mollify I}--\eqref{mollify III}.
    Thanks to Proposition~\ref{prop:wkSol rNSK(delta)}, we find for any $\artprs\in (0,1)$ some triplet $(\rhod,\uvecd,\cd)$ satisfying \eqref{wk sol rNSK(delt) regularity}--\eqref{wk sol rNSK energy(delt)}.
    In this section, we perform the vanishing artificial pressure limit, that is, the limit $\artprs\to 0$ for the approximating sequence of solutions $\{(\rhod,\uvecd,\cd)\}_{\artprs>0}$.
    As an outcome of this limit procedure, we obtain a limit triplet $(\rho,\uvec,c)$ that is a finite energy weak solution to the rNSKE \eqref{NSK(alpha,beta) system}--\eqref{NSK(alpha,beta) IC} on $\OmegaT$ emanating from the initial conditions $(\rho_0,(\rho\uvec)_0,c_0)$, thus completing the proof of \Cref{main result:ex wkSol}.
    \subsection{Uniform estimates and weak limit}
    To perform the vanishing artificial pressure limit $\artprs\to 0$ for the approximate sequence of solutions $\{(\rhod,\uvecd,\cd)\}_{\artprs>0}$, we need some bounds that are uniform in $\artprs$.
    Such uniform bounds are given in \eqref{wk sol rNSK bounds(delt)}, however, similarly as in \Cref{sec:VanArtVisc}, we notice that these bounds cannot guarantee equi-integrability for the sequence $\{p_{\coup,\artprs}(\rhod)\}_{\artprs>0}$.
    Therefore, we need to improve the uniform bound on the sequence of densities by adapting again the technique that is known from the compressible NSE (see e.g.~\cite{FeireislNovotnyPetzeltova2001, FeireislPetzeltova2000}).
    However, this time, we have to construct the suitable test function for the momentum equation with more care, since the uniform bound on the density is weaker than in \Cref{sec:VanArtVisc}. The proof of the following Lemma is similar to the one in \cite[Lemma~4.4]{RohdeWendt2025a}.
    \begin{lemma}\label{lem:impr prs}
        Let the hypotheses and notations of Proposition~\ref{prop:wkSol rNSK(delta)} hold true.
        Assume for $\artprs\in (0,1)$ that the triplet $(\rhod,\uvecd,\cd)$ satisfies \eqref{wk sol rNSK(delt) regularity}--\eqref{wk sol rNSK energy(delt)}.
        Then we have
        \begin{equation}\label{impr prs est I}
            \|\rhod\|_{L^{\tgamma+\theta}(\OmegaT)}
            +
            \artprs^{\frac{1}{\artgrwth+\theta}}\|\rhod\|_{L^{\artgrwth+\theta}(\OmegaT)}
            \leq 
            \Cdata,
        \end{equation}
        where $\theta := \frac{2\tgamma-3}{3}$.
        In particular, we have 
        \begin{equation}\label{impr prs cv art prs}
            \delta\rhod^\artgrwth \to 0 \quad \text{in } L^1(\OmegaT).
        \end{equation}
    \end{lemma}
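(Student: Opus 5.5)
We test the momentum equation \eqref{wk sol rNSK mom(delt)} with $\psi(t)\varphivec_\artprs$, where
\begin{equation*}
    \varphivec_\artprs := \Bog\Bigl(\rhod^\theta - \fint_\Omega \rhod^\theta\Bigr),
    \qquad \theta=\frac{2\tgamma-3}{3},
\end{equation*}
and $\psi\in C^\infty_c((0,T))$ approximates $\mathrm{id}_{(0,T)}$. This is the classical Feireisl--Petzeltov\'a choice. It is admissible after a regularization argument: mollify $\rhod$ in space, use the renormalized continuity equation for $b(\rhod)=\rhod^\theta$ (valid since $\rhod\in L^2(\OmegaT)$ and $\uvecd\in L^2(0,T;W^{1,2}_0)$, via the DiPerna--Lions theory, \cite[Lemma~11.13]{FeireislNovotny2017singlim}), and then pass to the limit in the mollification. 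The renormalized equation gives
\begin{equation*}
    \partial_t \varphivec_\artprs = -\Bog\bigl(\div(\rhod^\theta\uvecd)\bigr) - \Bog\Bigl((\theta-1)\rhod^\theta\div\uvecd - (\theta-1)\fint_\Omega\rhod^\theta\div\uvecd\Bigr).
\end{equation*}
Testing then yields the identity
\begin{equation*}
    \int_0^T\psi\int_\Omega p_{\coup,\artprs}(\rhod)\rhod^\theta = \int_0^T\psi\fint_\Omega\rhod^\theta\int_\Omega p_{\coup,\artprs}(\rhod) + \sum_j I_j .
\end{equation*}
The terms $I_j$ involve $\rhod\uvecd\cdot\partial_t\varphivec_\artprs$, $\rhod\uvecd\otimes\uvecd:\nablax\varphivec_\artprs$, $\Svisc(\nablax\uvecd):\nablax\varphivec_\artprs$, $\partial_t\psi\,\rhod\uvecd\cdot\varphivec_\artprs$, and the new coupling term $\coup\rhod\nablax\cd\cdot\varphivec_\artprs$.

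We estimate the terms using only the $\artprs$-uniform bounds \eqref{wk sol rNSK bounds(delt)}, namely $\rhod\in L^\infty(L^{\tgamma})$, $\sqrt{\rhod}\uvecd\in L^\infty(L^2)$, $\uvecd\in L^2(W^{1,2})$, and $\cd\in L^2(W^{2,2})\cap L^\infty(W^{1,2})$. The key point is that $\tgamma\ge 2$ gives $\rhod\uvecd\in L^\infty(L^{2\tgamma/(\tgamma+1)})$, which is bounded in $L^\infty(L^{4/3})$ at worst.

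\textbf{The first (nonlocal) term.} We bound $p_{\coup,\artprs}\ge -C$ and use $\int_\Omega p_{\coup,\artprs}(\rhod)\le C(1+E_\artprs)$. This works because $h_{\coup,\artprs}$ is controlled by $W(\rhod)$, $\rhod^2$, and $\frac{\artprs}{\artgrwth-1}\rhod^\artgrwth$ up to constants, all of which are bounded by the energy. So this term is bounded by $\Cdata$.

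\textbf{The $\Bog$ estimates.} Lemma~\ref{lem:Bog} gives $\|\nablax\varphivec_\artprs\|_{L^{p}}\le C\|\rhod^\theta\|_{L^p}$. With $p=\tgamma/\theta$ this yields $\varphivec_\artprs\in L^\infty(W^{1,\tgamma/\theta})$. The choice $\theta=\frac{2\tgamma-3}{3}$ is precisely the threshold making the convective term integrable:
\begin{itemize}
    \item $\rhod|\uvecd|^2\in L^1(L^{3\tgamma/(\tgamma+3)})$, and $\frac{\tgamma+3}{3\tgamma}+\frac{\theta}{\tgamma}=1$.
    \item The viscous term is fine because $2\theta\le\tgamma$.
    \item For $\partial_t\varphivec_\artprs$, the part $\Bog(\div(\rhod^\theta\uvecd))$ lies in $L^2(L^{q})$ with $\frac1q=\frac{\theta}{\tgamma}+\frac16$. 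Its pairing with $\rhod\uvecd\in L^\infty(L^{2\tgamma/(\tgamma+1)})\cap L^2(L^{6\tgamma/(\tgamma+6)})$ needs $\frac{\theta}{\tgamma}+\frac16+\frac{\tgamma+6}{6\tgamma}\le1$, i.e.\ $\theta\le\frac{2\tgamma}{3}-1$, which holds with equality.
    \item The other part of $\partial_t\varphivec_\artprs$ lies in $L^2(L^{\tgamma/(\theta+\dots)})$ and is handled analogously.
    \item The coupling term is harmless: $\rhod\nablax\cd\in L^2(L^{6\tgamma/(\tgamma+6)})$, since $\nablax\cd\in L^2(L^6)$, and $\varphivec_\artprs\in L^\infty(L^\infty)$ or a high $L^p$.
\end{itemize}
All the $I_j$ are therefore bounded by $\Cdata$.

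\textbf{Concluding the bound.} From $p_{\coup,\artprs}(r)r^\theta\ge c\,r^{\tgamma+\theta}+\artprs r^{\artgrwth+\theta}-C$ (using \eqref{grwth adm p} and the $\frac{\coup}{2}r^2$ term when $\gamma<2$) we obtain \eqref{impr prs est I}.

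\textbf{The consequence \eqref{impr prs cv art prs}.} We interpolate $L^\artgrwth$ between $L^1$ and $L^{\artgrwth+\theta}$. Writing $\artprs\rhod^\artgrwth=\artprs^{a}\cdot\artprs^{1-a}\rhod^{\artgrwth}$ and using the bound $\artprs\|\rhod\|^{\artgrwth+\theta}_{L^{\artgrwth+\theta}}\le C$, Hölder gives
\begin{equation*}
    \int\artprs\rhod^\artgrwth \le \artprs^{\theta/(\artgrwth+\theta)}\Bigl(\int\artprs\rhod^{\artgrwth+\theta}\Bigr)^{\artgrwth/(\artgrwth+\theta)}|\OmegaT|^{\theta/(\artgrwth+\theta)} \to0 .
\end{equation*}

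\textbf{Main obstacle.} The main obstacle is the term $\rhod\uvecd\cdot\partial_t\varphivec_\artprs$, together with justifying the renormalization for $\rhod^\theta$ when $\theta<1$. The exponent bookkeeping there is exactly critical, so it must be checked that no worse exponent is lost.
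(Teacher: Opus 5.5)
\textbf{The one-shot argument fails for $\tgamma>6$.} Your overall scheme is the paper's: testing with $\psi\,\Bog\bigl(\rhod^\theta-\fint_\Omega\rhod^\theta\bigr)$, energy control of the nonlocal and coupling terms, the exponent count for the convective term and for $\Bog(\div(\rhod^\theta\uvecd))$, and the H\"older argument for \eqref{impr prs cv art prs}. The term you single out as the main obstacle is fine. The real obstruction is elsewhere. Your claim that the viscous term is fine ``because $2\theta\le\tgamma$'' is false in general, since $2\theta\le\tgamma$ holds if and only if $\tgamma\le 6$. For $\tgamma>6$, the energy bound $\rhod\in L^\infty(0,T;L^{\tgamma}(\Omega))$ only gives $\nablax\varphivec_\artprs\in L^\infty(0,T;L^{\tgamma/\theta}(\Omega))$ with $\tgamma/\theta<2$. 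This cannot be paired with $\Svisc(\nablax\uvecd)\in L^2(\OmegaT)$. The same obstruction hits the part $\Bog\bigl(\rhod^\theta\div\uvecd-\fint_\Omega\rhod^\theta\div\uvecd\bigr)$ of $\partial_t\varphivec_\artprs$, which you call ``handled analogously''. Since $\theta>\tgamma/2$ exactly when $\tgamma>6$, the energy bounds do not even put $\rhod^\theta\div\uvecd$ in $L^1$ in space. The lemma is claimed for every $\gamma\in(1,\infty)$, so this is a genuine gap.

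\textbf{How to close it.} The paper first runs the argument with $\tilde\theta=\min\{\frac{2\tgamma-3}{3},\frac{\tgamma}{2}\}$. For this exponent all terms close with the energy bounds alone, giving $\rhod\in L^{\tgamma+\tilde\theta}(\OmegaT)$ uniformly in $\artprs$. It then repeats the test with $\theta$, using this improved integrability; for instance $\rhod^\theta\in L^2(\OmegaT)$ because $2\theta\le\frac32\tgamma$, which repairs the viscous term. A way to treat both bad terms at once is absorption:
\begin{itemize}
\item The viscous term is bounded by $C\|\rhod^\theta\|_{L^2(\OmegaT)}\le C\|\rhod\|_{L^{\tgamma+\theta}(\OmegaT)}^{\theta}$.
\item The term $\int_0^T\psi\int_\Omega\rhod\uvecd\cdot\Bog\bigl(\rhod^\theta\div\uvecd-\fint_\Omega\rhod^\theta\div\uvecd\bigr)$ has the same bound. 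Use H\"older with $\rhod^\theta\in L^{(\tgamma+\theta)/\theta}(\OmegaT)$ and $\div\uvecd\in L^2(\OmegaT)$. Interpolate $\rhod\uvecd$ between $L^\infty(0,T;L^{\frac{2\tgamma}{\tgamma+1}}(\Omega))$ and $L^2(0,T;L^{\frac{6\tgamma}{\tgamma+6}}(\Omega))$; the exponents balance exactly.
\end{itemize}
Since $\theta<\tgamma+\theta$, both terms are absorbed by the left-hand side, which is $\ge c\|\rhod\|^{\tgamma+\theta}_{L^{\tgamma+\theta}(\OmegaT)}-C$. Absorption needs the left-hand side to be finite a priori. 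At fixed $\artprs$ with $\theta>1$ this is not known, since you only have $\rhod\in L^{\artgrwth+1}(\OmegaT)$, and spatial mollification does not provide it. So replace $\rhod^\theta$ by $T_k(\rhod)^\theta$, with the cut-offs $T_k$ from the proof of \Cref{lem:str cv dens}, and let $k\to\infty$ by monotone convergence.
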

    \begin{remark}
        In dimension $d=2$, we have the same result with $\theta = (\tgamma-1)^-$.
    \end{remark}
    \begin{proof}
        For the sake of completeness we repeat the main arguments here.
        We set 
        \begin{equation*}
            \tilde{\theta}:= \min \left\{\frac{2\tgamma-3}{3}, \frac{\tgamma}{2}\right\}.
        \end{equation*}
        Using the same regularization argument as in \cite[Section~7.9.5]{NovotnyStraskraba2004} we can assume without loss of generality that
        \begin{equation}\label{wlog regularity}
            \rhod^{\tilde{\theta}}, \, \partial_t(\rhod^{\tilde{\theta}}) \in L^\infty(0,T;L^\infty(\Omega)).
        \end{equation}
        We define
        \begin{equation*}
            \varphivec_{\artprs}
            :=
            \Bog\left(\rhod^{\tilde{\theta}} - \fint_\Omega \rhod^{\tilde{\theta}} \right),
        \end{equation*}
        where $\Bog$ denotes the operator from \Cref{lem:Bog}.
        Due to the continuity equation \eqref{wk sol rNSK(delt) regularity} we have that
        \begin{equation}\label{impr prs RENORM}
            \partial_t \left(\rhod^{\tilde{\theta}}\right) 
            +
            \div\left(\rhod^{\tilde{\theta}} \uvecd\right)
            =
            \left(1-{\tilde{\theta}}\right)\rhod^{\tilde{\theta}} \div\uvecd
            \quad \text{in } \mathcal{D}^\prime\left(\RR^3_T\right),
        \end{equation}
        see e.g.~\cite[Lemma~11.13]{FeireislNovotny2017singlim}.
        Thanks to \eqref{wlog regularity} this yields $\div(\rhod^{\tilde{\theta}}\uvecd)\in L^2(\OmegaT)$, thus, \eqref{impr prs RENORM} holds almost everywhere in $\OmegaT$.
        This implies $\rhod^{\tilde{\theta}}\uvecd \in L^2(0,T;E^{6,2}_0(\Omega))$, and moreover
        \begin{equation*}
        \begin{aligned}
            \partial_t \varphivec_\artprs
            =
            \left(1-{\tilde{\theta}}\right) \Bog \left(\rhod^{\tilde{\theta}} \div\uvecd - \fint_\Omega \rhod^{\tilde{\theta}}\div\uvecd \right)
            -
            \Bog\left(\div(\rhod^{\tilde{\theta}}\uvecd)\right)
            \quad \text{a.e.~in } \OmegaT.
        \end{aligned}
        \end{equation*}
        Let $\psi \in C_c^\infty((0,T))$. By a density argument we may use $\psi \varphivec_\artprs$ as a test function in the momentum equation \eqref{wk sol rNSK mom(delt)} to obtain
         \begin{equation*}
             \begin{aligned}
                 &\int_0^T \psi \int_\Omega 
                 p_{\coup,\artprs}(\rhod) \rhod^{\tilde{\theta}} 
                \, \dx \, \dt 
                =
                \int_0^T \psi \int_\Omega
                p_{\coup,\artprs}(\rhod)
                \left( \fint_\Omega \rhod^{\tilde{\theta}}  \right)
                \, \dx \, \dt 
                \\
                &\quad
                -
                \int_0^T \partial_t \psi \int_\Omega 
                \rhod\uvecd\cdot \varphivec_\artprs
                \, \dx \, \dt  
                -
                \int_0^T \psi \int_\Omega 
                \rhod\uvecd \cdot \partial_t \varphivec_\artprs
                \, \dx \, \dt 
                \\
                &\quad 
                -
                \int_0^T \psi \int_\Omega 
                \rhod\uvecd\otimes\uvecd : \nablax \varphivec_\artprs
                \, \dx \, \dt 
                +
                \int_0^T\psi \int_\Omega 
                \Svisc(\nablax\uvecd):\nablax\varphivec_\artprs
                \, \dx \, \dt 
                \\
                &\quad -
                \int_0^T \psi \int_\Omega 
                \rhod\nablax\cd \cdot \varphivec_\artprs
                \, \dx \, \dt
                =:
                \sum\limits_{i}^{6} I_i^{\artprs}.
            \end{aligned}
        \end{equation*}
        In view of \eqref{wk sol rNSK bounds(delt)} and \Cref{lem:Bog}, we have for $r:= \frac{\tgamma}{\tilde{\theta}}$ that
        \begin{equation}\label{bds varphivec delt}
        \begin{aligned}
            &\|\varphivec_\artprs\|_{L^\infty(0,T;L^{\overline{r}}(\Omega))}
            +
            \|\nablax \varphivec_\artprs\|_{L^\infty(0,T;L^{r}(\Omega))}
            +
            \|\partial_t \varphivec_\artprs\|_{L^2(0,T;L^\frac{6r}{6+r}(\Omega))}
            \leq 
            \Cdata,
        \end{aligned}
        \end{equation}
        where $\overline{r}$ is defined in \eqref{defi Sobolev exponents}.
        Thanks to
        \begin{equation*}
            \frac{\tgamma}{\tilde{\theta}} \geq \max \left\{ \frac{3\tgamma}{2\tgamma-3}, 2 \right\} \geq \frac{6\tgamma}{5\tgamma-6},
        \end{equation*}
        we infer with \eqref{wk sol rNSK bounds(delt)}, \eqref{bds varphivec delt}, and H\"older's inequality that
        \begin{equation*}
        \begin{aligned}
            \left|I_6^\artprs\right|
            &\leq 
            C\|\psi\|_{L^\infty((0,T))}\|\rhod \nablax  \cd \|_{L^2(0,T;L^{\frac{6\tgamma}{\tgamma+6}}(\Omega))} \|\varphivec_\artprs\|_{L^2(0,T;L^{\frac{6\tgamma}{5\tgamma-6}}(\Omega))}
            \leq 
            \Cdata\|\psi\|_{L^\infty((0,T))}.
        \end{aligned}
        \end{equation*}
        By similar arguments, we infer for the remaining terms that
        \begin{equation*}
            \sum\limits_{i=1}^{5}\left|I_i^\artprs\right|
            \leq 
            \Cdata
            \left(\|\psi\|_{L^\infty((0,T))} + \|\partial_t\psi\|_{L^1((0,T))}\right).
        \end{equation*}
        These estimates are known from the theory on the compressible NSE and thus, we omit the details here. For a detailed exposition on these estimates we refer to \cite[Chapter~4]{FeireislNovotnyPetzeltova2001}.
        In total, we have shown for any $\psi \in C^\infty_c((0,T))$ that
        \begin{equation*}
            \int_0^T \psi \int_\Omega 
            p_{\coup,\artprs}(\rhod) \rhod^{\tilde{\theta}} 
            \, \dx \, \dt 
            \leq 
            \Cdata
            \left(\|\psi\|_{L^\infty((0,T))} + \|\partial_t\psi\|_{L^1((0,T))}\right).
        \end{equation*}
        Approximating the identity function $\mathrm{id}_{(0,T)}$ on $(0,T)$ with suitable test functions $\psi \in C^\infty_c((0,T))$ and using \eqref{grwth adm p} yields
        \begin{equation*}
            \|\rhod\|_{L^{\tgamma+\tilde{\theta}}(\OmegaT)} + \delta^{\frac{1}{\artgrwth+\tilde{\theta}}} \|\rhod\|_{L^{\artgrwth+\tilde{\theta}}(\OmegaT)} \leq  
            \Cdata.
        \end{equation*}
        Repeating the arguments with $\tilde{\theta}$ replaced by $\theta := \frac{2\tgamma-3}{3}$ by using the preceding estimate leads to \eqref{impr prs est I}.
        The convergence \eqref{impr prs cv art prs} follows from \eqref{impr prs est I} by using H\"older's inequality.
    \end{proof}
    Similarly as in \Cref{sec:VanArtVisc}, we use now Lemma~\ref{lem:impr prs} and the uniform bounds in \eqref{wk sol rNSK bounds(delt)} to identify weak limits that allow us, after passing to a non-relabeled subsequence, to pass to the limit $\artprs\to 0$ in the equations \eqref{wk sol rNSK cont(delt)}--\eqref{wk sol rNSK parab(delt)}.
    \begin{lemma}\label{lem:wk cv(delt)}
        Let the hypotheses and notations of Proposition~\ref{prop:wkSol rNSK(delta)} hold true.
        Then we have, after passing to a non-relabeled subsequence,
        \begin{equation}\label{wk lim(delt) convergences}
            \begin{aligned}
                &\rhod \to \rho \quad \text{in } \Cw([0,T];L^\tgamma(\Omega)),
                \quad
                \rhod \weak \rho \quad \text{in } L^{\frac{5\tgamma-3}{3}}(\OmegaT),
                \\
                &\uvecd \weak \uvec \quad \text{in } L^2(0,T;W^{1,2}_0(\Omega)),
                \quad
                \cd \weak c \quad \text{in } L^2(0,T;W^{2,2}(\Omega)),
                \\
                &\cd \weakstar c \quad \text{in } L^\infty(0,T;W^{1,2}(\Omega)),
                \quad
                \cd \to c \quad \text{in } C([0,T];L^2(\Omega))\cap L^2(0,T;W^{1,2}(\Omega)),
                \\
                &\rhod\uvecd \to \rho\uvec \quad \text{in } \Cw([0,T];L^{\frac{2\tgamma}{\tgamma+1}}(\Omega)),
                \quad
                \partial_t \cd \weak \partial_t c \quad \text{in } L^2(\OmegaT)
                \\
                &\rhod\nablax \cd \weak \rho\nablax\cd \quad \text{in } L^2(0,T;L^{\frac{6\tgamma}{\tgamma+6}}(\Omega)),
                \quad 
                \Peff(\rhod) \weak \overline{\Peff} \quad \text{in } L^{\frac{5\tgamma-3}{3\tgamma}}(\OmegaT),
            \end{aligned}
        \end{equation}
        with $\rho \geq 0$ a.e.~in $\OmegaT$.
        Moreover, the triplet $(\rho,\uvec,c)$ satisfies \eqref{def wsol reg}--\eqref{def wsol IC} with $\Peff(\rho)$ replaced by $\overline{\Peff}$.
    \end{lemma}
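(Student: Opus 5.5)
The argument runs parallel to the proof of \Cref{lem:wk cv(eps,delt)}, now with \Cref{lem:impr prs} supplying the equi-integrability that \eqref{wk sol rNSK bounds(delt)} alone does not give.

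\textbf{Bounds and weak limits.} The bounds \eqref{wk sol rNSK bounds(delt)} give $\sqrt{\rhod}\uvecd$ in $L^\infty(0,T;L^2)$ and $\rhod$ in $L^\infty(0,T;L^{\tgamma})$. Together with the Sobolev embedding $W^{1,2}\hookrightarrow L^6$ and H\"older's inequality this yields uniform bounds for
\begin{itemize}
\item $\rhod\uvecd$ in $L^\infty(0,T;L^{\frac{2\tgamma}{\tgamma+1}})\cap L^2(0,T;L^{\frac{6\tgamma}{\tgamma+6}})$,
\item $\rhod\uvecd\otimes\uvecd$ in $L^2(0,T;L^{\frac{3\tgamma}{\tgamma+3}})$,
\item $\rhod\nablax\cd$ in $L^2(0,T;L^{\frac{6\tgamma}{\tgamma+6}})$, using $\nablax \cd\in L^2(0,T;L^6)$ from the $L^2W^{2,2}$ bound.
\end{itemize}
\Cref{lem:impr prs} gives $\rhod$ bounded in $L^{\tgamma+\theta}(\OmegaT)$ with $\tgamma+\theta=\frac{5\tgamma-3}{3}$. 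Since $\Peff(r)\le C(1+r^{\tgamma})$ by \eqref{grwth adm p}, $\Peff(\rhod)$ is bounded in $L^{\frac{5\tgamma-3}{3\tgamma}}$, whose exponent is $>1$ because $\tgamma\ge 2$. By Banach--Alaoglu we extract weak(-$\ast$) limits of $\rhod,\uvecd,\cd,\partial_t\cd$ and of the nonlinear quantities, denoting the latter by overlines.

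\textbf{Time compactness and identification.} From the weak continuity and momentum equations, $\partial_t\rhod$ and $\partial_t(\rhod\uvecd)$ are bounded in $L^s(0,T;W^{-1,s})$ for some $s>1$. In the momentum equation the extra term $\delta\rhod^\artgrwth$ is bounded in $L^1$ and tends to zero by \eqref{impr prs cv art prs}, so it is handled in $L^1(0,T;W^{-1,s})$ with $s$ close to $1$ via $L^1\hookrightarrow W^{-1,s}$ for $s<3/2$. The standard Arzel\`a--Ascoli type argument then gives $\rhod\to\rho$ in $\Cw([0,T];L^{\tgamma})$ and $\rhod\uvecd\to\overline{\rho\uvec}$ in $\Cw([0,T];L^{\frac{2\tgamma}{\tgamma+1}})$. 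The compact embeddings $L^{\tgamma}\hookrightarrow\hookrightarrow W^{-1,2}$ and $L^{\frac{2\tgamma}{\tgamma+1}}\hookrightarrow\hookrightarrow W^{-1,2}$ (valid since $\frac{2\tgamma}{\tgamma+1}>\frac65$) combined with $\uvecd\weak\uvec$ in $L^2W^{1,2}_0$ identify $\overline{\rho\uvec}=\rho\uvec$ and $\overline{\rho\uvec\otimes\uvec}=\rho\uvec\otimes\uvec$. For $\cd$, the bounds on $\partial_t\cd$ in $L^2(\OmegaT)$ and on $\cd$ in $L^\infty W^{1,2}\cap L^2W^{2,2}$ give, by Aubin--Lions, strong convergence in $C([0,T];L^2)\cap L^2(0,T;W^{1,2})$. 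Weak convergence of $\rhod$ in $L^2(\OmegaT)$ times strong convergence of $\nablax\cd$ in $L^2$ then identifies $\overline{\rho\nablax c}=\rho\nablax c$, and $c\in C([0,T];W^{1,2})$ follows from the Bochner-space result already used before.

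\textbf{Passing to the limit.} We pass to the limit in \eqref{wk sol rNSK cont(delt)}--\eqref{wk sol rNSK parab(delt)}. In the pressure term, $p_{\coup,\artprs}(\rhod)=\Peff(\rhod)+\delta\rhod^\artgrwth$; the second part vanishes in $L^1$ by \eqref{impr prs cv art prs} and the first converges weakly to $\overline{\Peff}$. The initial conditions follow from the $\Cw$ convergences and \eqref{mollify II}, which gives $\rho_{0,\artprs}\to\rho_0$ and $(\rho\uvec)_{0,\artprs}\to(\rho\uvec)_0$, together with $\cd(0)=c_0$ and the $C([0,T];L^2)$ convergence. Nonnegativity of $\rho$ is preserved under weak limits.

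The only delicate step is the time equicontinuity of $\rhod\uvecd$, since the artificial pressure is merely $L^1$-bounded. \Cref{lem:impr prs} resolves this, as the term even converges to zero.
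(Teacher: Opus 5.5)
Your proposal is correct and follows the paper's proof essentially step by step: negative-Sobolev bounds on $\partial_t\rhod$ and $\partial_t(\rhod\uvecd)$ give the $\Cw$ convergences, Banach--Alaoglu together with \Cref{lem:impr prs} gives the weak limits including the pressure, the compact embeddings into $W^{-1,2}$ identify $\rho\uvec$ and $\rho\uvec\otimes\uvec$, Aubin--Lions handles $\cd$, and \eqref{impr prs cv art prs} removes the artificial pressure. One minor slip: $\nablax(\artprs\rhod^{\artgrwth})$, with $\artprs\rhod^\artgrwth$ bounded in $L^\infty(0,T;L^1(\Omega))$ by the energy bound, lies in $W^{-2,s}$ rather than $W^{-1,s}$; this is harmless for the $\Cw$ argument, and \Cref{lem:impr prs} in fact gives $\artprs\rhod^\artgrwth\to 0$ in $L^{\frac{\artgrwth+\theta}{\artgrwth}}(\OmegaT)$, so this step is not delicate at all.
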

    \begin{proof}
        Using the Sobolev embedding $W^{1,2}(\Omega)\hookrightarrow L^6(\Omega)$ and H\"older's inequality we deduce from \eqref{wk sol rNSK cont(delt)}, \eqref{wk sol rNSK mom(delt)}, and \eqref{wk sol rNSK bounds(delt)} that
        \begin{equation*}
            \|\partial_t \rhod\|_{L^2(0,T;W^{-1,s}(\Omega))}
            +
            \|\partial_t (\rhod\uvecd) \|_{L^s(0,T;W^{-1,s}(\Omega))}
            \leq 
            \Cdata
        \end{equation*}
        for some $s\in (1,\infty)$.
        Taking into account \eqref{wk sol rNSK bounds(delt)}, we thus have, after passing to a non-relabled subsequence,
        \begin{equation}\label{wk limit(delt) Cw cv}
            \rhod \to \rho \quad \text{in } \Cw([0,T];L^\tgamma(\Omega)),
            \quad 
            \rhod\uvecd \to \overline{\rho\uvec} \quad \text{in } \Cw([0,T];L^{\frac{2\tgamma}{\tgamma+1}}(\Omega)).
        \end{equation}
        Using the Sobolev embedding $W^{1,2}(\Omega)\hookrightarrow L^6(\Omega)$, H\"older's inequality, and the Banach--Alaoglu theorem, we further conclude from the uniform bounds in \eqref{wk sol rNSK bounds(delt)} that, up to a subsequence,
        \begin{equation}\label{wk cv(delt)}
            \begin{aligned}
                &\rhod \weak \rho \quad\text{in } L^{\frac{5\tgamma-3}{3}}(\OmegaT),
                \quad
                \uvecd \weak \uvec \quad \text{in } L^2(0,T;W^{1,2}_0(\Omega)),
                \\
                &\cd \weakstar c \quad \text{in } L^\infty(0,T;W^{1,2}(\Omega)),
                \quad
                \cd \weak c \quad \text{in } L^2(0,T;W^{2,2}(\Omega)),
                \\
                &\Peff(\rhod)\weak \overline{\Peff} \quad \text{in } L^{\frac{5\tgamma-3}{3\tgamma}}(\OmegaT),
                \quad
                \rhod\nablax\cd \weak \overline{\rho\nablax c} \quad \text{in } L^2(0,T;L^{\frac{6\tgamma}{\tgamma+6}}(\Omega)),
                \\
                &\rhod\uvecd\otimes\uvecd \weak \overline{\rho\uvec\otimes\uvec} \quad \text{in } L^2(0,T;L^{\frac{3\tgamma}{\tgamma+3}}(\Omega)),
                \quad
                \partial_t \cd \weak \partial_t c \quad \text{in } L^2(\OmegaT),
            \end{aligned}
        \end{equation}
        with $\rho \geq 0$ a.e.~in $\OmegaT$.
        Here, the overlined quantities denote weak limits of the corresponding sequences in their respective spaces.
        By standard results on Bochner spaces (see e.g.~\cite[Chapter~5]{EvansPDE2010}), we conclude that $c \in C([0,T];W^{1,2}(\Omega))$.
        Using the compact Sobolev embeddings $L^\tgamma(\Omega) \hookrightarrow\hookrightarrow W^{-1,2}(\Omega)$ and $L^\frac{2\tgamma}{\tgamma+1}(\Omega) \hookrightarrow\hookrightarrow W^{-1,2}(\Omega)$, we deduce from \eqref{wk limit(delt) Cw cv} and the second convergence in \eqref{wk cv(delt)} that
        \begin{equation}\label{wk limit(delt) id I}
            \overline{\rho\uvec} = \rho\uvec,
            \quad 
            \overline{\rho\uvec\otimes\uvec} = \rho\uvec\otimes\uvec 
            \quad \text{a.e.~in } \OmegaT.
        \end{equation}
        Moreover, using \eqref{wk sol rNSK bounds(delt)} as well as the compact Sobolev embedding $W^{1,2}(\Omega)\hookrightarrow\hookrightarrow L^2(\Omega)$, we conclude with the Aubin--Lions theorem that
        \begin{equation}\label{wk limit(delt) str cv c}
            \cd \to c\quad \text{in } C([0,T];L^2(\Omega)),
            \quad 
            \cd \to c \quad \text{in } L^2(0,T;W^{1,2}(\Omega)),
        \end{equation}
        which yields in combination with \eqref{wk cv(delt)} that
        \begin{equation}\label{wk limit(delt) id II}
            \overline{\rho\nablax c} = \rho\nablax c \quad \text{a.e.~in } \OmegaT.
        \end{equation}
        In view of \eqref{wk cv(delt)}--\eqref{wk limit(delt) id II}, we have shown that the triplet $(\rho,\uvec,c)$ satisfies \eqref{def wsol reg} and \eqref{wk lim(delt) convergences}.
        Thanks to \eqref{impr prs cv art prs} and \eqref{wk lim(delt) convergences}, passing to the limit $\artprs\to 0$ in the equations \eqref{wk sol rNSK cont(delt)}--\eqref{wk sol rNSK parab(delt)} yields that the triplet $(\rho,\uvec,c)$ satisfies \eqref{def wsol cont}--\eqref{def wsol parab} with $\Peff(\rho)$ replaced by $\overline{\Peff}$.
        Combining the convergences for the regularized initial conditions in \eqref{mollify II} with the convergences in \eqref{wk limit(delt) Cw cv} and in \eqref{wk limit(delt) str cv c} yields that the triplet $(\rho,\uvec,c)$ satisfies the initial conditions \eqref{def wsol IC}.
    \end{proof}
    \subsection{Strong convergence of the density}
    To complete the proof of \Cref{main result:ex wkSol}, we wish to deduce, as in \Cref{sec:VanArtVisc}, the strong convergence $\rhod \to \rho$ in $L^1(\OmegaT)$, which implies, in particular, that $\overline{p_\coup} = p_\coup (\rho)$.
    To do so, we adapt the approach in \cite{FeireislNovotnyPetzeltova2001,Feireisl2002} to the rNSKE. 
    More specifically, this time, since the pressure function $\Peff$ is in general non-monotone and since the uniform bounds are weaker than in \Cref{sec:VanArtVisc}, we have to use specific cut-off functions that were introduced in \cite{FeireislNovotnyPetzeltova2001,Feireisl2002} to approximate the density.
    Note that if $\Peff$ would be non-decreasing, we could also adapt the approach from \cite{Lions1998} to the rNSKE without specific cut-off functions.
    \begin{lemma}\label{lem:str cv dens}
        Let the hypotheses and notations of \Cref{lem:wk cv(delt)} hold true. Then we have
        \begin{equation*}
            \rhod \to \rho \quad \text{in } L^{\big(\frac{5\tgamma-3}{3}\big)^-}(\OmegaT).
        \end{equation*}
        In particular, we have
        \begin{equation*}
            \overline{\Peff} = \Peff(\rho) \quad \text{a.e.~in } \OmegaT.
        \end{equation*}
    \end{lemma}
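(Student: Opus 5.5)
We follow the Feireisl--Novotný--Petzeltová strategy with the truncations $T_k(r):=k\,T(r/k)$. Here $T\in C^\infty([0,\infty))$ is concave with $T(r)=r$ for $r\le 1$ and $T(r)=2$ for $r\ge 3$. We combine this with the non-monotone device of \cite{Feireisl2002} already used in \Cref{lem:str cv density(eps,delt)}. As there, split $p_\coup = h_\coup + q$ with $h_\coup(r):=h(r)+\frac{\coup}{2}r^2$ non-decreasing and $q\in C^\infty_c([0,\infty))$, $q\le 0$.

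\textbf{Step 1: weak limits.} After passing to a subsequence, let $\overline{T_k(\rho)}$, $\overline{T_k(\rho)h_\coup}$, $\overline{T_k(\rho)q}$, $\overline{T_k(\rho)\div\uvec}$, $\overline{L_k(\rho)}$ denote the weak limits of the corresponding quantities in $\rhod$. Here $L_k(r):=r\int_1^r T_k(z)z^{-2}\,\dd z$.

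\textbf{Step 2: effective viscous flux.} Since $T_k(\rhod)$ is bounded in $L^\infty$ and $\rhod$ is a renormalized solution of the continuity equation \eqref{wk sol rNSK cont(delt)}, we have
\begin{equation*}
\partial_t T_k(\rhod)+\div(T_k(\rhod)\uvecd)+(T_k'(\rhod)\rhod-T_k(\rhod))\div\uvecd=0 .
\end{equation*}
We treat $\coup\rhod\nablax\cd$ as a force. By \eqref{wk lim(delt) convergences} it converges weakly, and it is compact in $W^{-1,2}$ because $\cd\to c$ strongly in $L^2(0,T;W^{1,2})$. We also use $\delta\rhod^\artgrwth\to0$ in $L^1$ from \Cref{lem:impr prs}. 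With these inputs \Cref{prop:EVF(general)} gives
\begin{equation*}
\overline{T_k(\rho)p_\coup}-\Big(\bulkvisc+\tfrac{4\shearvisc}{3}\Big)\overline{T_k(\rho)\div\uvec}
=\overline{T_k(\rho)}\,\overline{p_\coup}-\Big(\bulkvisc+\tfrac{4\shearvisc}{3}\Big)\overline{T_k(\rho)}\div\uvec .
\end{equation*}
The exponent bookkeeping is what makes this work: $\rhod$ is bounded in $L^{\frac{5\tgamma-3}{3}}$, and $\tgamma\ge2$ gives pressure integrability $\frac{5\tgamma-3}{3\tgamma}>1$.

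\textbf{Step 3: oscillation defect measure and renormalization of the limit.} This is the main obstacle, since $\rho$ is not a priori in $L^2(\OmegaT)$ uniformly in the required sense. Monotonicity of $h_\coup$ together with $\overline{T_k(\rho)h_\coup}\ge\overline{T_k(\rho)}\,\overline{h_\coup}$ yields a bound for $\limsup_\delta\|T_k(\rhod)-T_k(\rho)\|_{L^{\tgamma+1}}$ that is uniform in $k$. Since $\tgamma\ge 2$, we even have $\rhod$ bounded in $L^{\frac{5\tgamma-3}{3}}\subset L^2$. Hence $\rho\in L^2(\OmegaT)$ and $\uvec\in L^2W^{1,2}$, so DiPerna--Lions renormalization \cite[Lemma~11.13]{FeireislNovotny2017singlim} applies to the limit directly. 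Thus $(\rho,\uvec)$ is a renormalized solution and satisfies the identity for $L_k(\rho)$, while $\overline{L_k(\rho)}$ satisfies the analogous equation with $\overline{T_k(\rho)\div\uvec}$.

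\textbf{Step 4: Gronwall.} Subtract the two identities and insert Step 2. The $h_\coup$ contribution has a favourable sign. The $q$ contribution is handled exactly as in \Cref{lem:str cv density(eps,delt)}: choose $\Lambda$ such that $\Lambda r\log r\pm q$-type functions are convex, and use $q=0$ on $[R,\infty)$. Sending $k\to\infty$ so that $T_k(r)\to r$ and $L_k(r)\to r\log r$, the error terms vanish by the uniform $L^{\frac{5\tgamma-3}{3}}$ bounds. This gives
\begin{equation*}
\int_\Omega \overline{\rho\log\rho}-\rho\log\rho\,(\tau)\,\dx\le C\int_0^\tau\!\!\int_\Omega \overline{\rho\log\rho}-\rho\log\rho\,\dx\,\dt .
\end{equation*}
Gronwall's inequality and strict convexity then give $\rhod\to\rho$ in $L^1$. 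Interpolation with the $L^{\frac{5\tgamma-3}{3}}$ bound upgrades this to $L^{(\frac{5\tgamma-3}{3})^-}$, and therefore $\overline{p_\coup}=p_\coup(\rho)$.
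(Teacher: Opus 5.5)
Your proposal is correct and follows essentially the same route as the paper. The paper likewise obtains the effective viscous flux identity for the truncations $T_k(\rhod)$ from Proposition~\ref{prop:EVF(general)}, discards the monotone part $h_\coup$ by sign, renormalizes the limit equation directly because $\tgamma\ge 2$ puts $\rho$ in $L^2$, lets $k\to\infty$, and closes with the $\Lambda$-convexity/Gr\"onwall argument of \Cref{lem:str cv density(eps,delt)}. The oscillation-defect bound in your Step~3 is superfluous, as you observe yourself, and the paper does not use it. You should also state explicitly that $\overline{L_k}(0)=L_k(\rho_0)$, which follows from the strong convergence $\rho_{0,\artprs}\to\rho_0$ in \eqref{mollify II}.
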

    \begin{proof}
        We fix some smooth function $T \in C^\infty([0,\infty))$ that satisfies
        \begin{equation*}
            \begin{aligned}
                &T(z) = z \quad \forall\, z \in [0,1],
                \qquad 
                T(z)=2 \quad \forall\, z \in [3,\infty),
                \qquad 
                T^{\prime\prime}(z)\leq 0 \quad \forall\, z \in [0,\infty).
            \end{aligned}
        \end{equation*}
        For $k \in \NN$, we then introduce the cut-off functions $T_k$, $L_k\colon [0,\infty)\to \RR$ via
        \begin{equation*}
        \begin{aligned}
            T_k(z):= kT\left(\frac{z}{k} \right),
            \qquad 
            L_k(z):= z\int_1^z \frac{T_k(s)}{s^2} \, \dd s
            \qquad \forall\, z \in [0,\infty).
        \end{aligned}
        \end{equation*}
        We readily verify that $T_k \in W^{1,\infty}((0,\infty))$ and 
        \begin{equation*}
            \begin{aligned}
                &|L_k(z)|\leq z\log z,
                \quad 
                |T_k(z)|\leq z,
                \quad
                L_k^\prime(z)z-L_k(z)=T_k(z),
                \\
                &T_k^\prime(z)\geq 0,
                \quad 
                T_k(z) \to z,
                \quad L_k(z) \to z\log z
                \qquad \forall\, z \in [0,\infty).
            \end{aligned}
        \end{equation*}
        Thanks to \eqref{wk sol rNSK(delt) regularity} and \eqref{wk sol rNSK cont(delt)} we have that
        \begin{equation*}
            \partial_t T_k(\rhod)
            +
            \div\big(T_k(\rhod)\uvecd\big)
            +
            \big(T_k^\prime(\rhod)\rhod - T_k(\rhod)\big)\div\uvecd
            =
            0
            \quad \text{in } \mathcal{D}^\prime(\RR^3_T).
        \end{equation*}
        In view of \eqref{wk sol rNSK bounds(delt)}, this implies, after passing to a non-relabeled subsequence, that
        \begin{equation*}
        \begin{aligned}
            &T_k(\rhod)\to \overline{T_k} \quad \text{in } \Cw([0,T];L^{\infty^-}(\Omega)),
            \quad 
            T_k(\rhod)\weakstar \overline{T_k}\quad \text{in } L^\infty(0,T;L^\infty(\Omega)),
            \\
            &\big(T_k^\prime(\rhod)\rhod-T_k(\rhod)\big)\div\uvecd \weak \overline{\big(T_k^\prime(\rho)\rho- T_k(\rho)\big)\div
            \uvec} \quad \text{in } L^2(\OmegaT).
        \end{aligned}
        \end{equation*}
        By virtue of \Cref{lem:wk cv(delt)} we may thus apply Proposition~\ref{prop:EVF(general)} to conclude that the relation
        \begin{equation}\label{str cv EVF rel}
            \begin{aligned}
                &\lim\limits_{\artprs\to 0} \int_0^T\int_\Omega 
                \varphi\, T_k(\rhod) \left(\Peff(\rhod) - \left(\bulkvisc + \frac{4\shearvisc}{3}\right)\div\uvecd\right)
                \, \dd x \, \dd t
                \\
                &\quad=
                \int_0^T \int_\Omega 
                \varphi\, \overline{T_k} \left(\overline{\Peff} - \left(\bulkvisc + \frac{4\shearvisc}{3}\right) \div\uvec\right)
                \, \dd x \, \dd t
            \end{aligned}
        \end{equation}
        holds for any test function $\varphi \in C^\infty_c(\OmegaT)$.
        In view of \eqref{wk sol rNSK bounds(delt)} and \Cref{lem:impr prs}, we have, after passing to a non-relabeled subsequence, that
        \begin{equation*}
            \begin{aligned}
                &h_\coup(\rhod) \weak \overline{h_\coup} \quad \text{in } L^{\frac{5\tgamma-3}{3\tgamma}}(\OmegaT),
                \quad 
                q(\rhod) \weakstar \overline{q} \quad \text{in } L^\infty(0,T;L^\infty(\Omega)),
                \\
                &T_k(\rhod)h_\coup(\rhod) \weak \overline{T_kh_\coup}\quad \text{in } L^{\frac{5\tgamma-3}{3\tgamma}}(\OmegaT),
                \quad 
                T_k(\rhod)q(\rhod) \weakstar \overline{T_kq} \quad \text{in } L^\infty(0,T;L^\infty(\Omega)),
                \\
                &T_k(\rhod) \div\uvecd \weak \overline{T_k\div\uvec} \quad \text{in } L^2(\OmegaT),
                \quad 
                \rhod q(\rhod) \weak \overline{\rho q} \quad \text{in } L^{\frac{5\tgamma-3}{3}}(\OmegaT),
            \end{aligned}
        \end{equation*}
        with $\overline{\Peff}=\overline{h_\coup}+\overline{q}$ and $\overline{q} \leq 0$.
        Combining these convergences with \eqref{str cv EVF rel} and using the monotonicity of $r\mapsto h_\coup(r)$ and $r\mapsto T_k(r)$ (see e.g. \cite[Lemma~3.35]{NovotnyStraskraba2004}) leads to
        \begin{equation}\label{EVF mon ineq}
            \left(\bulkvisc + \frac{4\shearvisc}{3}\right)\Big(\overline{T_k}\div\uvec - \overline{T_k\div\uvec}\Big)
            \leq 
            \overline{T_k}\,\overline{q} - \overline{T_kq}
            \quad \text{a.e.~in } \OmegaT.
        \end{equation}
        Using \eqref{wk sol rNSK(delt) regularity}, \eqref{wk sol rNSK cont(delt)}, as well as \Cref{lem:wk cv(delt)}, we deduce that
        \begin{equation}\label{renorm Lk}
            \begin{aligned}
                &\partial_t L_k(\rhod) + \div\big(L_k(\rhod)\uvecd\big) = - T_k(\rhod) \div \uvecd 
                \quad \text{in } \mathcal{D}^\prime(\RR^3_T),
                \\
                &\partial_t L_k(\rho) + \div\big(L_k(\rho)\uvec\big) = -T_k(\rho)\div\uvec \quad \text{in } \mathcal{D}^\prime(\RR^3_T).
            \end{aligned}
        \end{equation}
        In view of \eqref{wk sol rNSK bounds(delt)}, this implies that $L_k(\rho),L_k(\rhod)\in \Cw([0,T];L^{\tgamma^-}(\Omega))$ and moreover, after passing to a non-relabeled subsequence, that
        \begin{equation*}
            L_k(\rhod) \to \overline{L_k}\quad \text{in } \Cw([0,T];L^{\tgamma^-}(\Omega)).
        \end{equation*}
        By similar arguments, we infer that $\rho\log \rho,\, \rhod\log\rhod \in \Cw([0,T];L^{\tgamma^-}(\Omega))$ and, after passing to a non-relabeled subsequence,
        \begin{equation*}
            \rhod\log\rhod \to \overline{\rho\log\rho} \qquad \text{in } \Cw([0,T];L^{\tgamma^-}(\Omega)).
        \end{equation*}
        Taking into account \eqref{mollify II}, \Cref{lem:wk cv(delt)}, and using the Sobolev embedding $L^s(\Omega)\hookrightarrow\hookrightarrow W^{-1,2}(\Omega)$ which is compact for any $s >\frac{6}{5}$, we deduce from $\eqref{renorm Lk}_1$ after passing to the limit $\artprs\to 0$ that
        \begin{equation*}
            \int_\Omega \overline{L_k}(\tau) - L_k(\rho_{0}) , \dd x 
            =
            -\int_0^\tau \int_\Omega 
            \overline{T_k\div\uvec}
            \, \dd x \, \dd t
            \quad \forall\, \tau \in [0,T].
        \end{equation*}
        From $\eqref{renorm Lk}_2$ we conclude
        \begin{equation*}
            \int_\Omega L_k(\rho) - L_k(\rho_{0}) \, \dd x
            =
            -\int_0^\tau \int_\Omega T_k(\rho) \div\uvec \, \dd x 
            \quad \forall\, \tau \in [0,T].
        \end{equation*}
        Combining both relations and \eqref{EVF mon ineq} yields for any $\tau \in [0,T]$ that
        \begin{equation}\label{ineq diff Lk}
            \begin{aligned}
                &\int_\Omega 
                \overline{L_k}(\tau) - L_k(\rho(\tau))
                \, \dd x 
                =
                \int_0^\tau \int_\Omega 
                T_k(\rho)\div\uvec - \overline{T_k \div\uvec}
                \, \dd x \, \dd t
                \\
                &\quad \leq 
                \int_0^\tau \int_\Omega 
                \big(T_k(\rho) - \overline{T_k}\big)\div\uvec 
                \, \dd x \, \dd t
                +
                C\int_0^\tau \int_\Omega 
                \overline{T_k}\,\overline{q} - \overline{T_k q}
                \, \dd x \, \dd t .
            \end{aligned}
        \end{equation}
        Now we wish to pass to the limit $k \to \infty$ in \eqref{ineq diff Lk}.
        To do so, we first obtain by using the weak lower semi-continuity of norms that
        \begin{equation*}
        \begin{aligned}
            \|\rho - \overline{T_k}\|_{L^1(\OmegaT)}
            \leq 
            \liminf\limits_{\artprs\to 0} \int_0^T \int_\Omega |\rhod - T_k(\rhod)|\, \dd x \, \dd t
            \leq 
            C\sup\limits_{\delta>0}\int\int_{\{\rhod\geq k\}} 
            \rhod\, \dd x \, \dd t
            \xrightarrow{k\to\infty} 0,
        \end{aligned}
        \end{equation*}
        since $\rhod$ is bounded in $L^{\frac{5\tgamma-3}{3}}(\OmegaT)$ uniformly with respect to $\delta$. 
        With Lebesgue's dominated convergence theorem, we further deduce that $T_k(\rho) \to \rho$ in $L^1(\OmegaT)$. Combining both convergences yields $\|T_k(\rho)-\overline{T_k}\|_{L^1(\OmegaT)}\to 0$.
        Since $T_k(\rho)$ and $\overline{T_k}$ are both bounded in $L^{\frac{5\tgamma-3}{3}}(\OmegaT)$ uniformly with respect to $k$, and since $\frac{5\tgamma-3}{3}>2$ thanks to $\tgamma\geq 2$, we conclude by using interpolation that $\|T_k(\rho) - \overline{T_k}\|_{L^2(\OmegaT)}\to 0$.
        By using H\"older's inequality, we thus have
        \begin{equation*}
            \lim\limits_{k\to\infty} \int_0^\tau \int_\Omega \big(T_k(\rho) - \overline{T_k}\big) \div\uvec \, \dd x \, \dd t = 0.
        \end{equation*}
        By similar arguments, we have that
        \begin{equation*}
        \begin{aligned}
            &\lim\limits_{k\to\infty}\int_\Omega 
            \overline{L_k}(\tau)-L_k(\rho(\tau))
            \, \dd x 
            = 
            \int_\Omega 
            \overline{\rho\log\rho}(\tau) - (\rho\log\rho)(\tau)\, \dd x,
            \\
            &\lim\limits_{k\to\infty}\int_0^\tau \int_\Omega 
            \overline{T_k}\, \overline{q} - \overline{T_k q}
            \, \dd x \, \dd t
            =
            \int_0^\tau \int_\Omega 
            \rho\overline{q} - \overline{\rho q}
            \, \dd x \, \dd t.
        \end{aligned}
        \end{equation*}
        Thus, passing to the limit $k\to \infty$ in \eqref{ineq diff Lk} leads to
        \begin{equation*}
            \begin{aligned}
                \int_\Omega 
                \overline{\rho\log\rho}(\tau) - (\rho\log\rho)(\tau)
                \, \dd x 
                \leq 
                C\int_0^\tau \int_\Omega 
                \rho \overline{q} - \overline{\rho q}
                \, \dd x \, \dd t 
                \qquad \forall\, \tau \in [0,T].
            \end{aligned}
        \end{equation*}
        From now on, we can proceed precisely as in the proof of Lemma~\ref{lem:str cv density(eps,delt)} (see \eqref{wk-lim-q-str-conv}) to conclude that $\rhod \to \rho$ in $L^1(\OmegaT)$.
        Using \Cref{lem:impr prs} and interpolation eventually leads to 
        \begin{equation*}
            \rhod \to \rho \quad \text{in } L^{\big(\frac{5\tgamma-3}{3}\big)^-}(\OmegaT).
        \end{equation*}
        \end{proof}
    Finally, we use the convergences derived in this section so far to pass to the limit $\artprs\to 0$ in the energy inequality completing the proof of 
    Theorem~\ref{main result:ex wkSol}.
    \begin{proof}[Proof of \Cref{main result:ex wkSol}]
        In view of \Cref{lem:wk cv(delt)} and \Cref{lem:str cv dens}, we only have to show that the triplet $(\rho,\uvec,c)$ satisfies the energy inequality \eqref{def wsol Eineq}.
        For $\artprs\in(0,1)$, we have that the triplet $(\rhod,\uvecd,\cd)$ satisfies the energy inequality \eqref{wk sol rNSK energy(delt)}, that is,
        \begin{equation}\label{energy ineq(delta)}
        \begin{aligned}
            &-\int_0^T  \, E_\artprs[\rhod,\rhod\uvecd,\cd] \partial_t \psi \, \dt 
            +
            \int_0^T \psi \int_\Omega 
            \Svisc(\nablax\uvecd):\nablax\uvecd + \paracoup|\partial_t \cd|^2
            \, \dx \, \dt
            \\
            &\leq 
            E_{\artprs}[\rho_{0,\artprs},(\rho\uvec)_{0,\artprs},c_0] \psi(0)
        \end{aligned}
        \end{equation}
        for any $\psi \in C^\infty_c([0,T))$ satisfying $\psi \geq 0$, where $E_\artprs[\cdot,\cdot,\cdot]$ is defined in Proposition~\ref{prop:wkSol rNSK(delta)}.
        By virtue of \Cref{lem:impr prs}, \Cref{lem:wk cv(delt)}, and \Cref{lem:str cv dens}, we have that
        \begin{equation*}
            E_{\artprs}[\rhod,\rhod\uvecd,\cd] \weak E[\rho,\rho\uvec,c] \quad \text{in } L^1((0,T)).
        \end{equation*}
        By using the latter convergence, the weak lower semi-continuity of convex functionals, and \eqref{mollify III}, we deduce from \eqref{energy ineq(delta)} after passing to the limit $\artprs\to 0$ that the inequality
        \begin{equation*}
            \begin{aligned}
            -\int_0^T  E[\rho,\rho\uvec,c] \partial_t \psi \, \dt 
            +
            \int_0^T \psi \int_\Omega 
            \Svisc(\nablax\uvec):\nablax\uvec + \paracoup|\partial_t c|^2
            \, \dx \, \dt
            \leq 
            E[\rho_{0},(\rho\uvec)_{0},c_0] \psi(0)
            \end{aligned}
        \end{equation*}
        holds for any test function $\psi \in C^\infty_c([0,T))$ satisfying $\psi \geq 0$, where $E[\cdot,\cdot,\cdot]$ is defined in \eqref{def E(t)}.
        This is precisely \eqref{def wsol Eineq}.
    \end{proof}

%% file: conclusions.tex
In this work, we have investigated the IBVP of the isothermal rNSKE from a mathematical point of view.
This system can be seen as an approximate system of the isothermal NSKE and, in particular, as a model for a compressible viscous two-phase fluid if the underlying pressure function is of Van-der-Waals type.
Based on the approximation scheme in \cite{FeireislNovotnyPetzeltova2001} we have shown the global-in-time existence of finite energy weak-solutions (see \Cref{main result:ex wkSol}) for a broad variety of pressure functions generalizing the results in \cite{FeireislNovotnyPetzeltova2001,Feireisl2002} to the rNSKE.
This result includes non-monotone pressure functions of Van-der-Waals type and complements the results in \cite{RohdeWendt2025a,ChaudhuriRohdeWendt2025}, where the global-in-time existence of finite energy weak solutions to the IBVP associated with the 3D rNSKE has been postulated.
\par
As an application, for the single-phase compressible NSE in a porous domain, several homogenization results have been derived depending on the size of the obstacles and their mutual distances.
Without being exhaustive we refer the interested reader to \cite{BasaricChaudhuri2024, BellaOschmann2023, BLMO2025, DieningFeireislLu2017, FeireislLu2015, FeireislNovotnyTakahashi2010, HoeferKowalczykSchwarzacher2021, LuPokorny2021, LuSchwarzacher2018, Masmoudi2002, NecasovaOschmann2023, NecasovaPan2022, Oschmann2022, PokornySkrisovsky2021a} and the references contained therein.
Concerning the compressible NSKE, up to our knowledge, the only rigorous homogenization result is due to \cite{RohdeWolff2020} focusing on the case where the size of the obstacles is comparable to their mutual distances.
There, the authors replace the third-order differential operator in the momentum equation $\eqref{NSK eqs}_2$ with a smooth convolution operator.
With the global-in-time existence result \Cref{main result:ex wkSol} at hand we plan to derive corresponding homogenization results for the rNSKE \cite{OschmannWendt2026-hom}.
\par
Recently, a parabolic relaxation formulation of the non-isothermal NSK system has been proposed in \cite{KeimMunzRohde2023}. 
It would be very interesting to develop a weak solution framework for this non-isothermal relaxation system including a corresponding global-in-time existence result.

%% file: appendix.tex
Here we state a weak compactness result concerning the effective viscous flux in its general form, which we will use frequently in this paper (see \Cref{sec:VanArtVisc} and \Cref{sec:VanArtPres}).
This result extends the general version from \cite[Proposition~7.36]{NovotnyStraskraba2004} in the sense that we allow for one more term in the continuity and in the momentum equation, respectively, as well as include the 2D case.
The full extension is not needed for the analysis in this work, however, we think that this result is of independent interest in view of future applications.
Although most of the proof follows the same lines as in \cite[Proposition~7.36]{NovotnyStraskraba2004}, we decided to give a full proof for the sake of completeness.
\begin{proposition}\label{prop:EVF(general)}
    Let $\Omega \subseteq \RR^d$, $d\in\{2,3\}$, be a bounded domain and let $T,\mu>0$, $\eta \geq 0$.
    Suppose that\footnote{In 3D, an explicit choice of exponents could be $q=\frac52$, $r=s=\sigma=w=2$, and $z=\frac{12}{5}$.}
    \begin{equation}\label{EVF(general) conditions exponents}
        \begin{aligned}
            &1<q,r,s,z,\sigma<\infty,
            \quad 
            \max\{2,r^\prime\}
            \leq 
            w\leq 
            \infty,
            \quad 
            z^\prime <\sigma^\ast,
            \\
            &\frac{1}{q} + \frac{1}{2^\ast} < \frac{1}{z^\prime},
            \quad 
            \frac{1}{z} + \frac{1}{q^\ast} < 1,
            \quad 
            \frac{1}{s} + \frac{1}{q^\ast} < 1,
        \end{aligned}
    \end{equation}
    and assume that
    \begin{equation}\label{EVF(general) cv}
    \begin{aligned}
        &\qvec_n \to \qvec \quad \text{in } \Cw([0,T];L^z(\Omega;\RR^d)),
        \quad
        \uvec_n \weak \uvec \quad \text{in } L^2(0,T;W^{1,2}_0(\Omega;\RR^d)),
        \\
        &p_n \weak p \quad \text{in } L^r(\OmegaT),
        \quad
        \Fvec_n \weak \Fvec \quad \text{in } L^s(\OmegaT;\RR^d),
        \\
        &\Gvec_n \to \Gvec \quad \text{in } L^{w^\prime}(0,T;W^{-1,w^\prime}(\Omega;\RR^d))
        \quad
        g_n \to g \quad \text{in } \Cw([0,T];L^q(\Omega)),
        \\
        &g_n \weakstar g \quad \text{in } L^w(\OmegaT),
        \quad
        \xi f_n \weak \xi f \quad \text{in } L^2(0,T;W^{-1,2}(\Omega)) \quad \forall\, \xi \in C^\infty_c(\Omega),
        \\
        &\nablax\Deltax^{-1}\big[\xi f_n\big] \to \nablax\Deltax^{-1}\big[\xi f\big] \quad \text{in } L^2(0,T;L^{z^\prime}(\Omega;\RR^d)) \quad \forall\, \xi \in C^\infty_c(\Omega),
        \\
        &h_n \weak h \quad \text{in } L^2(0,T;L^\sigma(\Omega)),
    \end{aligned}
    \end{equation}
    where $\Delta^{-1}$ denotes the inverse Laplace operator on $\RR^d$.
    For $n\in\NN$, suppose that 
    \begin{equation}\label{EVF(general) cont}
        \begin{aligned}
            &\partial_t g_n + \div(g_n\uvec_n) = f_n + h_n \quad \text{in } \mathcal{D}^\prime(\OmegaT)
        \end{aligned}
    \end{equation}
    and
    \begin{equation}\label{EVF(general) mom}
        \begin{aligned}
            \partial_t \qvec_n 
            +
            \div(\qvec_n \otimes \uvec_n)
            +
            \nablax p_n
            -
            \div\Svisc(\nablax\uvec_n) 
            = 
            \Fvec_n
            +
            \Gvec_n
            \quad \text{in } \mathcal{D}^\prime(\OmegaT;\RR^d),
        \end{aligned}
    \end{equation}
    where 
    \begin{equation*}
        \Svisc(A) 
        := 
        \mu \left(A + A^{\mathrm{T}} - \frac{2}{d} \mathrm{Tr}A\right) + \eta \,\mathrm{Tr} A
        \quad \text{for } A\in \RR^{d\times d}.
    \end{equation*}
    Then the relation
    \begin{equation}\label{EVF(general) relation}
        \begin{aligned}
            &\lim\limits_{n\to\infty}
            \int_0^T \int_\Omega 
            \varphi g_n \left( p_n - \left(\eta + \frac{2(d-1)}{d}\mu\right)\div\uvec_n\right)
            \, \dd x \, \dd t
            \\
            &\quad =
            \int_0^T \int_\Omega \varphi g \left( p - \left(\eta + \frac{2(d-1)}{d}\mu\right)\div\uvec \right)\, \dd x \, \dd t
        \end{aligned}
    \end{equation}
    holds for any test function $\varphi \in C^\infty_c(\OmegaT)$.
\end{proposition}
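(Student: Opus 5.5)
We follow the classical Lions--Feireisl argument as presented in \cite[Proposition~7.36]{NovotnyStraskraba2004}. The idea is to test the momentum equation with $\psi\phi\,\nablax\Deltax^{-1}[\phi g_n]$ and the limit equation with the analogous limit field, and then to compare the two identities. Here $\varphi=\psi(t)\phi(x)$ with $\psi\in C^\infty_c((0,T))$ and $\phi\in C^\infty_c(\Omega)$; linear combinations of such products are dense, and by the uniform bounds this suffices for general $\varphi$.

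First we pass to the limit in \eqref{EVF(general) cont} and \eqref{EVF(general) mom}, using $\qvec_n\otimes\uvec_n\weak\overline{\qvec\otimes\uvec}$ and the compact embedding $L^z\hookrightarrow\hookrightarrow W^{-1,2}$. The condition $\frac1z+\frac1{q^\ast}<1$ ensures integrability, and the product limit equals $\qvec\otimes\uvec$ by the Div--Curl-type argument combining $\Cw$ convergence with compactness in time. This gives the limit equations $\partial_t g+\div(g\uvec)=f+h$ and the corresponding momentum equation.

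Next we set $\mathcal{A}:=\nablax\Deltax^{-1}$ and $\mathcal{R}_{ij}:=\partial_i\partial_j\Deltax^{-1}$, and define the test field $\varphivec_n=\psi\phi\,\mathcal{A}[\phi g_n]$. Its time derivative is computed from \eqref{EVF(general) cont}:
\begin{equation*}
\partial_t\mathcal{A}[\phi g_n]=-\mathcal{A}[\div(\phi g_n\uvec_n)]+\mathcal{A}[g_n\uvec_n\cdot\nablax\phi]+\mathcal{A}[\phi f_n]+\mathcal{A}[\phi h_n].
\end{equation*}
Inserting $\varphivec_n$, and using $\div\mathcal{A}=\mathrm{id}$ together with the standard identity
\begin{equation*}
\int\phi\,\Svisc(\nablax\uvec_n):\nablax\mathcal{A}[\phi g_n]=\Big(\eta+\tfrac{2(d-1)}{d}\mu\Big)\int\phi^2 g_n\div\uvec_n+\text{lower order terms},
\end{equation*}
which holds since $\mathcal{R}$ is symmetric and $\div$ commutes with $\Deltax^{-1}$, we obtain
\begin{equation*}
\int\!\!\int\psi\phi^2 g_n\Big(p_n-\big(\eta+\tfrac{2(d-1)}{d}\mu\big)\div\uvec_n\Big)=\int\!\!\int\psi\,\uvec_n\cdot\big(\mathcal{R}[\phi\qvec_n]\phi g_n-\phi\qvec_n\mathcal{R}[\phi g_n]\big)+\mathcal{L}_n.
\end{equation*}
Here $\mathcal{L}_n$ collects the terms $\partial_t\psi$, the cutoff commutators involving $\nablax\phi$, $\int\Fvec_n\cdot\varphivec_n$, $\langle\Gvec_n,\varphivec_n\rangle$, $\int\qvec_n\cdot\psi\phi\mathcal{A}[\phi f_n]$ and $\int\qvec_n\cdot\psi\phi\mathcal{A}[\phi h_n]$. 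The same identity holds for the limit quantities.

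Each term in $\mathcal{L}_n$ converges to its counterpart by weak--strong pairing, and this is where the exponent conditions are used:
\begin{itemize}
\item $\mathcal{A}[\phi g_n]\to\mathcal{A}[\phi g]$ strongly in $L^p(\OmegaT)$ for large $p<q^\ast$, by the $\Cw$ convergence, compactness of $\mathcal{A}$ on $L^q$, and dominated convergence in $t$. Combined with $\frac1s+\frac1{q^\ast}<1$ this handles the $\Fvec_n$ term.
\item Since $\mathcal{A}$ maps $L^w$ boundedly into $W^{1,w}$, $\nablax\varphivec_n$ stays bounded in $L^w$. Paired with the strong convergence of $\Gvec_n$ in $L^{w'}(W^{-1,w'})$, this handles the $\Gvec_n$ term.
\item $\mathcal{A}[\phi h_n]\weak\mathcal{A}[\phi h]$ in $L^2(W^{1,\sigma})$, which embeds compactly in $L^{z'}$ because $z'<\sigma^\ast$. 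With $\qvec_n$ converging in $\Cw(L^z)$ this gives the $h_n$ term.
\item The $f_n$ term uses the assumed strong convergence of $\mathcal{A}[\xi f_n]$ in $L^2(L^{z'})$.
\item The $p_n$ term with $\phi\nablax\phi$ pairs weak $L^r$ convergence against strong convergence of $\mathcal{A}[\phi g_n]$. This uses $w\ge r'$, via the bound of $g_n$ in $L^w$ and interpolation.
\end{itemize}

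The main obstacle is the commutator term. We invoke the Div--Curl/commutator lemma, \cite[Lemma~3.4]{FeireislNovotnyPetzeltova2001} or equivalently \cite[Theorem~10.28]{FeireislNovotny2017singlim}. Applied with $\phi g_n\to\phi g$ in $\Cw(L^q)$ and $\phi\qvec_n\to\phi\qvec$ in $\Cw(L^z)$, it gives, for every $t$,
\begin{equation*}
\mathcal{R}[\phi\qvec_n]\phi g_n-\phi\qvec_n\mathcal{R}[\phi g_n]\weak\mathcal{R}[\phi\qvec]\phi g-\phi\qvec\,\mathcal{R}[\phi g]
\end{equation*}
in $L^a(\Omega)$, where $\frac1a=\frac1q+\frac1z<1$. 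We need $\frac1a+\frac1{2^\ast}<1$, which is the hypothesis $\frac1q+\frac1{2^\ast}<\frac1{z'}$. Since the commutator is then uniformly bounded and, by the compactness of the Riesz commutator, converges strongly in $L^2(W^{-1,2^{\ast\prime}})$, we can pass to the limit against $\uvec_n\weak\uvec$ in $L^2(W^{1,2}_0)$. Comparing the two identities yields \eqref{EVF(general) relation}. The delicate point I would check first is the time integrability of the commutator: upgrading the pointwise-in-$t$ weak convergence to strong convergence in $L^2(0,T;W^{-1,2})$ via Arzel\`a--Ascoli, using the equicontinuity that the $\Cw$ convergence supplies.
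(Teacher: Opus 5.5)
Your outline is the paper's proof: the same test function $\psi\phi\,\nablax\Deltax^{-1}[\xi g_n]$ (you take $\xi=\phi$), the same commutator lemma, and the same comparison with the limit system tested by $\psi\phi\,\nablax\Deltax^{-1}[\xi g]$. However, several limit passages you call weak--strong pairings have, as written, two factors that converge only weakly.

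The clearest case is the $h_n$ term. Weak convergence of $\mathcal{A}[\phi h_n]$ in $L^2(0,T;W^{1,\sigma})$ plus the compact spatial embedding $W^{1,\sigma}\hookrightarrow\hookrightarrow L^{z'}$ does not give strong convergence in $L^2(0,T;L^{z'})$, because nothing controls $h_n$ in time (take $h_n=\sin(nt)\chi(x)$). The compactness has to sit on $\qvec_n$. Dually, $L^z\hookrightarrow\hookrightarrow W^{-1,\sigma'}$, so $\qvec_n(t)\rightharpoonup\qvec(t)$ in $L^z$, which holds for every $t$ by the $\Cw$ convergence, becomes strong in $W^{-1,\sigma'}$ for every $t$. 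The uniform $L^\infty(0,T;L^z)$ bound and dominated convergence then give $\qvec_n\to\qvec$ in $L^p(0,T;W^{-1,\sigma'})$ for all $p<\infty$. You pair this with $\psi\phi\,\mathcal{A}[\phi h_n]\rightharpoonup\psi\phi\,\mathcal{A}[\phi h]$ in $L^2(0,T;W^{1,\sigma}_0)$.

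The same device (pointwise-in-time weak convergence, a compact embedding into a negative Sobolev space, dominated convergence in $t$) is also needed for:
\begin{itemize}
\item the cut-off terms $\nablax\phi\cdot\mathcal{R}[\phi g_n]\cdot\uvec_n$ and $\nablax\phi\cdot\uvec_n\,\phi g_n$, via compactness of $g_n$ in $W^{-1,2}$, available since $q>(2^\ast)'$;
\item the term $\phi\qvec_n\cdot\mathcal{A}[g_n\uvec_n\cdot\nablax\phi]$, by moving $\nablax\Deltax^{-1}$ onto $\phi\qvec_n$ as the paper does.
\end{itemize}

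This device also settles the point you flagged for the commutator, and no Arzel\`a--Ascoli or equicontinuity argument is needed. The commutator lemma gives weak convergence in $L^a$ at every $t$ for the whole sequence. The compact embedding $L^a\hookrightarrow\hookrightarrow W^{-1,2}$ makes it strong at every $t$, and the uniform $L^\infty(0,T;L^a)$ bound with dominated convergence yields convergence in $L^p(0,T;W^{-1,2})$ for every $p<\infty$, which is exactly the paper's route. The target space must be $W^{-1,2}$, the dual of $W^{1,2}_0$, not $W^{-1,(2^\ast)'}$. Your condition $\frac1a+\frac1{2^\ast}<1$ is precisely what makes $L^a\hookrightarrow\hookrightarrow W^{-1,2}$ compact.

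Three smaller points:
\begin{itemize}
\item The hypothesis $\frac1z+\frac1{q^\ast}<1$ is what turns the $\partial_t\psi$ term into a weak--strong pair. The limit equations instead need $\frac1z+\frac1{2^\ast}<1$, which follows from $\frac1q+\frac1{2^\ast}<\frac1{z'}$.
\item With $\xi=\phi$ you only get the relation for $\psi\phi^2$. Either keep a separate cut-off $\xi\equiv1$ on $\mathrm{supp}\,\phi$ as the paper does, or polarize.
\item In the other direction, your interpolation argument for the $p_n$ term via $w\geq r'$, and your density step to general $\varphi$, are more explicit than the paper. The paper records convergence of $\nablax\Deltax^{-1}[g_n\xi]$ only in $L^{(q^\ast)^-}$ and derives the relation only for products $\psi\phi\xi$.
\end{itemize}
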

\begin{proof}
    We assume without loss of generality that the boundary of $\Omega$ is smooth. 
    This is indeed no restriction, since the arguments in the sequel are all local.
    We fix $\xi \in C^\infty_c(\Omega)$.
    Due to \eqref{EVF(general) cont}, we have that
    \begin{equation}\label{EVF cont eq}
    \begin{aligned}
        \partial_t \nablax \Deltax^{-1}\big[g_n\xi\big]
        &=
        -
        \nablax \div\Deltax^{-1}\big[g_n \uvec_n \xi\big]
        +
        \nablax \Deltax^{-1}\big[g_n\uvec_n \cdot \nablax \xi\big]
        \\
        &\quad \quad 
        +
        \nablax \Deltax^{-1}\big[f_n\xi\big]
        +
        \nablax \Deltax^{-1}\big[h_n\xi\big]
        \qquad \text{in } \mathcal{D}^\prime(\OmegaT;\RR^d).
    \end{aligned}
    \end{equation}
    By virtue of the Mikhlin multiplier theorem (see e.g.~\cite[Theorem~9]{FeireislNovotny2017singlim}), we have that
    \begin{equation}\label{EVF embed Riesz}
        \begin{aligned}
            \nablax \Deltax^{-1}\colon L^s(\RR^d) \to W^{1,s}(\RR^d;\RR^d),
            \quad 
            \nablax \otimes\nablax \Deltax^{-1}\colon L^s(\RR^d) \to L^s(\RR^d;\RR^{d\times d})
        \end{aligned}
    \end{equation}
    define both continuous linear operators for any $s \in (1,\infty)$.
    In particular, we have due to \eqref{EVF(general) conditions exponents}, \eqref{EVF(general) cv}, and \eqref{EVF cont eq} that $\nablax \Deltax^{-1}\big[g_n \xi\big]\in L^w(0,T;W^{1,w}(\Omega;\RR^d))\cap L^\infty(0,T;W^{1,q}(\Omega;\RR^d))$ as well as $\partial_t \nablax \Deltax^{-1}\big[g_n\xi\big] \in L^2(0,T;L^{z^\prime}(\Omega;\RR^d))$.
    For fixed functions $\psi \in C^\infty_c((0,T))$, $\phi \in C^\infty_c(\Omega)$ we thus have by a density argument that the function $\psi \phi \nablax\Deltax^{-1}\big[g_n\xi\big]$ is a valid test function for \eqref{EVF(general) mom}.
    Testing \eqref{EVF(general) mom} with the function $\psi \phi \nablax\Deltax^{-1}\big[g_n\xi\big]$, we obtain after a long calculation using integration by parts several times, as well as \eqref{EVF cont eq}, that
    \begin{equation}\label{EVF mom(n)}
        \begin{aligned}
            &\int_0^T \psi \int_\Omega 
            \phi\xi  g_n \left(p_n - \left(\eta + \frac{2(d-1)}{d}\shearvisc\right) \div\uvec_n\right)
            \, \dd x \, \dd t
            \\
            &=
            -\int_0^T \psi \int_\Omega 
            \nablax \phi \cdot p_n \nablax \Deltax^{-1}\big[ g_n \xi \big] 
            \, \dd x \, \dd t
            \\&
            \quad
            +
            \left(\eta + \frac{d-2}{d}\shearvisc\right) \int_0^T \psi \int_\Omega 
            \nablax \phi \cdot \div\uvec_n \nablax \Deltax^{-1}\big[g_n \xi\big]
            \, \dd x \, \dd t
            \\
            &\quad 
            +
            \mu \int_0^T \psi \int_\Omega 
            \nablax \phi \cdot \nablax \uvec_n \cdot \nablax \Delta_x^{-1}\big[ g_n \xi \big]
            \, \dd x \, \dd t
            \\
            &\quad
            -
            \mu \int_0^T \psi \int_\Omega 
            \nablax \phi \cdot \nablax \otimes\nablax \Deltax^{-1} \big[g_n \xi\big] \cdot \uvec_n
            \, \dd x \, \dd t
            \\
            &\quad
            +
            \mu \int_0^T \psi \int_\Omega 
            \nablax \phi \cdot \uvec_n g_n \xi 
            \, \dd x \, \dd t
            -
            \int_0^T \psi \int_\Omega 
            \phi \Fvec_n \cdot \nablax \Deltax^{-1}\big[ g_n \xi \big]
            \, \dd x \, \dd t
            \\
            &\quad
            -
            \big\langle \Gvec_n, \psi\phi \nablax\Deltax^{-1}\big[g_n \xi\big] \big\rangle
            -
            \int_0^T \psi \int_\Omega 
            \nablax \phi \cdot \qvec_n \otimes \uvec_n \cdot \nablax \Deltax^{-1} \big[g_n \xi\big]
            \, \dd x \, \dd t
            \\
            &\quad
            -
            \int_0^T \psi \int_\Omega 
            \phi \qvec_n \cdot \nablax \Deltax^{-1}\big[f_n \xi \big]
            \, \dd x \, \dd t
            -
            \int_0^T \psi \int_\Omega 
            \qvec_n \cdot \nablax\Deltax^{-1}\big[h_n \xi\big]
            \, \dd x \, \dd t
            \\
            &\quad
            -
            \int_0^T \partial_t \psi \int_\Omega 
            \phi \qvec_n \cdot \nablax \Deltax^{-1}\big[g_n \xi\big]
            \, \dd x \, \dd t
            +
            \int_0^T \psi \int_\Omega 
            \div \Deltax^{-1}\big[\phi \qvec_n\big] g_n \uvec_n \cdot \nablax \xi
            \, \dd x \, \dd t
            \\
            &\quad
            +
            \int_0^T \psi \int_\Omega 
            \uvec_n \cdot
            \bigg(
            g_n\xi \nablax \div \Deltax^{-1}\big[\qvec_n \phi\big] - \nablax \otimes\nablax \Deltax^{-1}\big[g_n \xi\big] \cdot \qvec_n \phi
            \bigg)
            \, \dd x \, \dd t.
        \end{aligned}
    \end{equation}
    We wish to pass to the limit $n\to \infty$ in \eqref{EVF mom(n)}.
    To do so, we deduce from \eqref{EVF(general) cv} some further convergences by exploiting compactness arguments.
    \\
    By virtue of the third and fourth relation in \eqref{EVF(general) conditions exponents}, we have that the Sobolev embeddings $L^q(\Omega)\hookrightarrow\hookrightarrow W^{-1,2}(\Omega)$, $L^z(\Omega)\hookrightarrow\hookrightarrow W^{-1,2}(\Omega)$, and $L^z(\Omega) \hookrightarrow\hookrightarrow W^{-1,\sigma^\prime}(\Omega)$ are all compact.
    In view of the first and sixth convergences in \eqref{EVF(general) cv}, this leads to
    \begin{equation}\label{EVF str cv qvec(n) + g(n)}
        \begin{aligned}
            &\qvec_n \to \qvec \quad \text{in } L^{\infty^-}(0,T;W^{-1,2}(\Omega;\RR^d)),
            \quad 
            \qvec_n \to \qvec \quad \text{in } L^{\infty^-}(0,T;W^{-1,\sigma^\prime}(\Omega;\RR^d)),
            \\
            &g_n \to g \quad \text{in } L^{\infty^-}(0,T;W^{-1,2}(\Omega)),
        \end{aligned}
    \end{equation}
    and moreover, by virtue of the continuity of the second operator in \eqref{EVF embed Riesz},
    \begin{equation}\label{EVF str cv Riesz(g_n xi)}
        \begin{aligned}
            \nablax \otimes \nablax \Deltax^{-1}\big[g_n \xi\big] \to \nablax \otimes \nablax \Deltax^{-1}\big[g \xi\big]
            \quad \text{in } L^{\infty^-}(0,T;W^{-1,2}(\Omega;\RR^{d\times d})).
        \end{aligned}
    \end{equation}
    From the continuity of the second operator in \eqref{EVF embed Riesz} we further conclude from the seventh and tenth convergences in \eqref{EVF(general) cv} that
    \begin{equation}\label{EVF wk cv L^w(W^1,w)}
    \begin{aligned}
        &\psi \phi \nablax\Deltax^{-1}\big[g_n \xi\big] \weak \psi \phi \nablax \Deltax^{-1}\big[g\xi\big] \quad \text{in } L^w(0,T;W^{1,w}_0(\Omega;\RR^d)),
        \\
        &\psi\phi \nablax\Deltax^{-1}\big[h_n\xi\big] \weak \psi\phi\nablax\Deltax^{-1}\big[h\xi\big] \quad \text{in } L^2(0,T;W^{1,\sigma}_0(\Omega;\RR^d)).
    \end{aligned}
    \end{equation}
    Combining the second convergence in \eqref{EVF(general) cv} with \eqref{EVF str cv qvec(n) + g(n)} and using the Sobolev embedding $W^{1,2}(\Omega) \hookrightarrow L^{\overline{2}}(\Omega)$ leads to
    \begin{equation}\label{EVF wk cv qvec otimes uvec(n) + g uvec(n)}
        \begin{aligned}
            \qvec_n \otimes \uvec_n \weak \qvec\otimes\uvec \quad \text{in } L^{2}(0,T;L^{\frac{\overline{2}z}{\overline{2}+z}}(\Omega;\RR^{d\times d}),
            \quad 
            g_n \uvec_n \weak g \uvec \quad \text{in } L^2(0,T;L^{\frac{\overline{2}q}{\overline{2}+q}}(\Omega;\RR^d)).
        \end{aligned}
    \end{equation}
    In view of the compact Sobolev embeddings $W^{1,z}(\Omega)\hookrightarrow\hookrightarrow L^{(z^\ast)^-}(\Omega)$ and $W^{1,q}(\Omega)\hookrightarrow\hookrightarrow L^{(q^\ast)^-}(\Omega)$ and the continuity of the first operator in \eqref{EVF embed Riesz}, we conclude from the first and sixth convergence in \eqref{EVF(general) cv} that
    \begin{equation}\label{EVF str cv inv divergences}
        \begin{aligned}
            &\nablax \Deltax^{-1}\big[\qvec_n \xi\big] \to \nablax \Deltax^{-1}\big[\qvec \xi\big]
            \quad \text{in } L^{\infty^-}(0,T;L^{(z^\ast)^-}(\Omega;\RR^{d\times d})),
            \\ 
            &\nablax \Deltax^{-1}\big[g_n \xi\big] \to \nablax \Deltax^{-1}\big[g \xi\big]
            \quad \text{in } L^{\infty^-}(0,T;L^{(q^\ast)^-}(\Omega;\RR^d)).
        \end{aligned}
    \end{equation}
    Due to the fourth relation in \eqref{EVF(general) conditions exponents}, we conclude further from the first and sixth convergences in \eqref{EVF(general) cv} by using \cite[Theorem~11.34]{FeireislNovotny2017singlim}, that
    \begin{equation*}
    \begin{aligned}
        &g_n(t)\xi\nablax\div\Deltax^{-1}\big[\qvec_n(t)\phi\big] - \nablax \otimes\nablax \Deltax^{-1} \big[g_n(t)\xi\big]\cdot \qvec_n(t) \phi 
        \\
        &\weak
        g(t) \xi \nablax \div\Deltax^{-1}\big[\qvec(t)\phi\big] - \nablax \otimes \nablax \Deltax^{-1}\big[g(t) \xi\big] \cdot \qvec(t) \phi
        \quad \text{in } L^{\frac{zq}{z+q}}(\Omega;\RR^d)
    \end{aligned}
    \end{equation*}
    for any $t \in [0,T]$.
    This implies in view of the uniform bounds implied by the first and sixth convergences in \eqref{EVF(general) cv}, the continuity of the second operator in \eqref{EVF embed Riesz}, as well as the compactness of the embedding $L^{\frac{zq}{z+q}}(\Omega) \hookrightarrow\hookrightarrow W^{-1,2}(\Omega)$, which holds due to the fourth relation in \eqref{EVF(general) conditions exponents}, that
    \begin{equation}\label{EVF str cv commutator}
        \begin{aligned}
            &g_n\xi\nablax\div\Deltax^{-1}\big[\qvec_n\phi\big] - \nablax \otimes\nablax \Deltax^{-1} \big[g_n\xi\big]\cdot \qvec_n \phi 
            \\
            &\to 
            g \xi \nablax \div\Deltax^{-1}\big[\qvec\phi\big] - \nablax \otimes \nablax \Deltax^{-1}\big[g \xi\big] \cdot \qvec \phi
            \quad \text{in } L^{\infty^-}(0,T;W^{-1,2}(\Omega;\RR^d).
        \end{aligned}
    \end{equation}
    In view of \eqref{EVF(general) cv} and \eqref{EVF str cv qvec(n) + g(n)}--\eqref{EVF str cv commutator}, we may pass to the limit $n\to \infty$ in \eqref{EVF mom(n)} and obtain
    \begin{equation}\label{EVF I}
        \begin{aligned}
            &\lim\limits_{n\to\infty}\int_0^T \psi \int_\Omega 
            \phi\xi  g_n \left(p_n - \left(\eta + \frac{2(d-1)}{d}\shearvisc\right) \div\uvec_n\right)
            \, \dd x \, \dd t
            \\
            &=
            -\int_0^T \psi \int_\Omega 
            \nablax \phi \cdot p \nablax \Deltax^{-1}\big[ g \xi \big] 
            \, \dd x \, \dd t
            \\
            &\quad
            +
            \left(\eta + \frac{d-2}{d}\shearvisc\right) \int_0^T \psi \int_\Omega 
            \nablax \phi \cdot \div\uvec \nablax \Deltax^{-1}\big[g \xi\big]
            \, \dd x \, \dd t
            \\
            &\quad
            +
            \mu \int_0^T \psi \int_\Omega 
            \nablax \phi \cdot \nablax \uvec \cdot \nablax \Delta_x^{-1}\big[ g \xi \big]
            \, \dd x \, \dd t
            \\
            &\quad
            -
            \mu \int_0^T \psi \int_\Omega 
            \nablax \phi \cdot \nablax \otimes\nablax \Deltax^{-1} \big[g \xi\big] \cdot \uvec
            \, \dd x \, \dd t
            \\
            &\quad
            +
            \mu \int_0^T \psi \int_\Omega 
            \nablax \phi \cdot \uvec g \xi 
            \, \dd x \, \dd t
            -
            \int_0^T \psi \int_\Omega 
            \phi \Fvec \cdot \nablax \Deltax^{-1}\big[ g \xi \big]
            \, \dd x \, \dd t
            \\
            &\quad
            - 
            \big\langle \Gvec, \psi \phi \nablax \Deltax^{-1}\big[g \xi\big]\big \rangle
            -
            \int_0^T \psi \int_\Omega 
            \nablax \phi \cdot \qvec \otimes \uvec \cdot \nablax \Deltax^{-1} \big[g \xi\big]
            \, \dd x \, \dd t
            \\
            &\quad 
            -
            \int_0^T \psi \int_\Omega 
            \phi \qvec \cdot \nablax \Deltax^{-1}\big[f \xi \big]
            \, \dd x \, \dd t
            -
            \int_0^T \psi \int_\Omega 
            \phi \qvec \cdot \nablax\Deltax^{-1}\big[h \xi\big] 
            \, \dd x \, \dd t
            \\
            &\quad 
            -
            \int_0^T \partial_t \psi \int_\Omega 
            \phi \qvec \cdot \nablax \Deltax^{-1}\big[g \xi\big]
            \, \dd x \, \dd t
            +
            \int_0^T \psi \int_\Omega 
            \div \Deltax^{-1}\big[\phi \qvec\big] g \uvec \cdot \nablax \xi
            \, \dd x \, \dd t
            \\
            &\quad
            +
            \int_0^T \psi \int_\Omega 
            \uvec \cdot
            \bigg(
            g\xi \nablax \div \Deltax^{-1}\big[\qvec \phi\big] - \nablax \otimes\nablax \Deltax^{-1}\big[g \xi\big] \cdot \qvec \phi
            \bigg)
            \, \dd x \, \dd t.
        \end{aligned}
    \end{equation}
    By the same token, we may pass to the limit $n\to\infty$ in \eqref{EVF(general) cont} and \eqref{EVF(general) mom} to obtain
    \begin{equation}\label{EVF limit cont}
        \begin{aligned}
            \partial_t g
            +
            \div(g\uvec)
            =
            f + h
            \quad \text{in } \mathcal{D}^\prime(\OmegaT),
        \end{aligned}
    \end{equation}
    and
    \begin{equation}\label{EVF limit mom}
        \begin{aligned}
            \partial_t \qvec 
            +
            \div(\qvec\otimes\uvec)
            +
            \nablax p
            -
            \div\Svisc(\nablax \uvec)
            =
            \Fvec
            +
            \Gvec
            \quad \text{in } \mathcal{D}^\prime(\OmegaT;\RR^d).
        \end{aligned}
    \end{equation}
    From \eqref{EVF limit cont}, we deduce
    \begin{equation}\label{EVF limit time deriv test function}
        \begin{aligned}
            \partial_t\nablax\Deltax^{-1}\big[g\xi\big]
            &=
            -\nablax\div\Deltax^{-1}[g\uvec\xi]
            +
            \nablax \Deltax^{-1}\big[g\uvec\cdot \nablax \xi\big]
            \\
            &\quad\quad +
            \nablax \Deltax^{-1}\big[f\xi\big]
            +
            \nablax \Deltax^{-1}\big[h \xi\big]
            \qquad \text{in } \mathcal{D}^\prime(\OmegaT;\RR^d).
        \end{aligned}
    \end{equation}
    Due to \eqref{EVF(general) cv}, \eqref{EVF limit time deriv test function}, as well as the continuity of the operators in \eqref{EVF embed Riesz}, we have that $\nablax \Deltax^{-1}\big[g\xi\big]\in L^w(0,T;W^{1,w}(\Omega;\RR^d))\cap L^\infty(0,T;W^{1,q}(\Omega;\RR^d))$ and $\partial_t \nablax \Deltax^{-1}\big[g\xi\big]\in L^2(0,T;L^{z^\prime}(\Omega;\RR^d))$.
    In particular, we have by a density argument that the function $\psi\phi\nablax \Deltax^{-1}\big[g\xi\big]$ is a valid test function for \eqref{EVF limit mom}.
    Testing \eqref{EVF limit mom} with the function $\psi\phi\nablax \Deltax^{-1}\big[g\xi\big]$, we obtain after a long calculation using integration by parts several times, as well as \eqref{EVF limit cont}, that
    \begin{equation}\label{EVF II}
        \begin{aligned}
            &\int_0^T \psi \int_\Omega 
            \phi\xi  g \left(p - \left(\eta + \frac{2(d-1)}{d}\shearvisc\right) \div\uvec\right)
            \, \dd x \, \dd t
            \\
            &=
            -\int_0^T \psi \int_\Omega 
            \nablax \phi \cdot p \nablax \Deltax^{-1}\big[ g \xi \big] 
            \, \dd x \, \dd t
            \\
            &\quad
            +
            \left(\eta + \frac{d-2}{d}\shearvisc\right) \int_0^T \psi \int_\Omega 
            \nablax \phi \cdot \div\uvec_n \nablax \Deltax^{-1}\big[g \xi\big]
            \, \dd x \, \dd t
            \\
            &\quad
            +
            \mu \int_0^T \psi \int_\Omega 
            \nablax \phi \cdot \nablax \uvec \cdot \nablax \Delta_x^{-1}\big[ g \xi \big]
            \, \dd x \, \dd t
            \\
            &\quad
            -
            \mu \int_0^T \psi \int_\Omega 
            \nablax \phi \cdot \nablax \otimes\nablax \Deltax^{-1} \big[g \xi\big] \cdot \uvec
            \, \dd x \, \dd t
            \\
            &\quad
            +
            \mu \int_0^T \psi \int_\Omega 
            \nablax \phi \cdot \uvec g \xi 
            \, \dd x \, \dd t
            -
            \int_0^T \psi \int_\Omega 
            \phi \Fvec \cdot \nablax \Deltax^{-1}\big[ g \xi \big]
            \, \dd x \, \dd t
            \\
            &\quad
            -
            \big\langle \Gvec, \psi \phi \nablax \Deltax^{-1}\big[g \xi\big] \big\rangle
            -
            \int_0^T \psi \int_\Omega 
            \nablax \phi \cdot \qvec \otimes \uvec \cdot \nablax \Deltax^{-1} \big[g \xi\big]
            \, \dd x \, \dd t
            \\
            &\quad
            -
            \int_0^T \psi \int_\Omega 
            \phi \qvec \cdot \nablax \Deltax^{-1}\big[f \xi\big]
            \, \dd x \, \dd t
            -
            \int_0^T \psi \int_\Omega 
            \phi \qvec \cdot \nablax \Deltax^{-1} \big[h \xi\big]
            \, \dd x \, \dd t
            \\
            &\quad
            -
            \int_0^T \partial_t \psi \int_\Omega 
            \phi \qvec \cdot \nablax \Deltax^{-1}\big[g \xi\big]
            \, \dd x \, \dd t
            +
            \int_0^T \psi \int_\Omega 
            \div \Deltax^{-1}\big[\phi \qvec\big] g \uvec \cdot \nablax \xi
            \, \dd x \, \dd t
            \\
            &\quad
            +
            \int_0^T \psi \int_\Omega 
            \uvec \cdot
            \bigg(
            g\xi \nablax \div \Deltax^{-1}\big[\qvec \phi\big] - \nablax \otimes\nablax \Deltax^{-1}\big[g \xi\big] \cdot \qvec \phi
            \bigg)
            \, \dd x \, \dd t.
        \end{aligned}
    \end{equation}
    Comparing \eqref{EVF I} and \eqref{EVF II} leads to \eqref{EVF(general) relation}.
\end{proof}